\documentclass[final,leqno,onetabnum]{siamltex704}
\usepackage{amsmath,amssymb,epsfig,subfigure,color,graphicx,url}
\usepackage{exscale}
\usepackage{epstopdf}
\usepackage{multirow}
\usepackage{tikz,pgf,pgfarrows,pgfnodes,pgfautomata,pgfheaps,pgfshade}
\usepackage{caption,tikz-qtree}
\usepackage{float}
\usetikzlibrary{matrix,arrows}
\usepackage{algorithmicx,algorithm}
\usepackage{rotating,booktabs}
\usepackage{enumerate}
\allowdisplaybreaks

\usepackage[colorlinks=true]{hyperref}
\hypersetup{urlcolor=red,citecolor=red,linkcolor=red}

\newtheorem{remark}[theorem]{{\it Remark\/}}
\newtheorem{example}[theorem]{{\it Example\/}}
\renewcommand{\thetable}{\thesection{}.\arabic{table}}

\numberwithin{algorithm}{section}
\numberwithin{table}{section}
\numberwithin{figure}{section}

\usepackage[titletoc,title]{appendix}
\newcommand{\ba}{{\bf a}}
\newcommand{\bb}{{\bf b}}
\newcommand{\bc}{{\bf c}}

\newcommand{\br}{{\bf r}}

\newcommand{\bx}{{\bf x}}
\newcommand{\by}{{\bf y}}
\newcommand{\bz}{{\bf z}}
\newcommand{\bu}{{\bf u}}
\newcommand{\bv}{{\bf v}}

\newcommand{\cb}{{\cal B}}
\newcommand{\cc}{{\cal C}}
\newcommand{\ci}{{\cal I}}
\newcommand{\cj}{{\cal J}}

\newcommand{\ch}{{\cal H}}

\newcommand{\ct}{{\cal T}}

\newcommand{\R}{{\mathbb{R}}}
\newcommand{\Rm}{{\mathbb{R}^m}}
\newcommand{\Rn}{{\mathbb{R}^n}}

\newcommand{\Rmm}{{\mathbb{R}^{m\times m}}}
\newcommand{\Rmn}{{\mathbb{R}^{m\times n}}}

\newcommand{\dist}{{\rm dist}}
\newcommand{\dom}{{\rm dom}}

\newcommand{\BE}{\begin{equation}}
\newcommand{\EE}{\end{equation}}

\DeclareMathOperator*{\argmin}{argmin}
\newcommand{\normmm}[1]{{\vert\kern-0.25ex \vert\kern-0.25ex \vert #1
    \vert\kern-0.25ex \vert\kern-0.25ex\vert}}

\begin{document}
\title{An augmented Lagrangian method with exact multipliers for non-separable  composite $\ell_0$-$\ell_2$ regularization\thanks{The work is supported by the National Natural Science Foundation of China under grant Nos. 12401496, 12501525, the Fundamental Research Funds for the Central Universities under grant No. D5000240300, the Natural Science Foundation of Jiangxi Province with No. 20244BCE52256, and Jiangxi Science and Technology Normal University Doctoral Research Initiation Fund No. 2023BSQD25.
}}
\author{Huan Ren\thanks{School of Mathematical Sciences, Jiangxi Science and Technology Normal University, Nanchang, 330038, People's Republic of China (jxrh1994@163.com).}\and Guiyun Xiao\thanks{Corresponding author. School of Mathematics and Statistics, Northwestern Polytechnical University, Xi'an, 710072, People's Republic of China (xiaogy@nwpu.edu.cn and xiaogy999@163.com). 
} }
\date{}
\maketitle
\begin{abstract}
This paper studies a non-separable  composite $\ell_0$-$\ell_2$ regularization model that simultaneously enforces sparsity and smoothness for inverse problems. The $\ell_0$ norm induces inherent nonconvexity and nonsmoothness, while linear transformations further introduce nonseparability, making the problem computationally challenging to solve. The existing inexact augmented Lagrangian method suffers from high computational complexity and unstable convergence. To overcome these difficulties, we develop two novel augmented Lagrangian algorithms with exact multipliers, designed respectively for the full row-rank case and the general matrix case, where all subproblems are globally optimized via closed-form solutions.  Furthermore, we prove linear convergence of the proposed method when the transformation matrix is full row rank. In the general setting, all accumulation points of the generated sequence are KKT points for the original problem. Numerical experiments on synthetic data, trend filtering, and image smoothing demonstrate the superior efficiency and accuracy of the proposed methods over the existing method, confirming our theoretical analysis.
\end{abstract}

\vspace{3mm}
{\bf Keywords.} Augmented Lagrangian method, exact multipliers, non-separable  composite $\ell_0$-$\ell_2$ regularization.

\vspace{3mm}
{\bf AMS subject classifications.} 65F22, 65K05, 90C26.

\section{Introduction}\label{sec1}
With the growing interest in inverse problem research \cite{BB98,EL12,NW01}, the importance of regularization techniques has become increasingly prominent. Inverse problems are typically ill-posed, and regularization techniques can alleviate their ill-conditioning while preventing overfitting. This has also greatly advanced the research and development of regularization techniques, including total variation (TV)-based regularization \cite{AV94}, sparsity prior-based regularization\cite{D92,D06}, Tikhonov regularization \cite{H10}, and Bayesian prior model-based regularization \cite{LS09}.

Among sparsity-prior regularizers, $\ell_0$-regularization is the most intuitive and effective across signal estimation \cite{AG21}, feature coding \cite{C18}, radar space-time adaptive processing \cite{LW24}, image processing \cite{AS15,DZ13,YG19}, and machine learning \cite{CZ11}. Unfortunately, the non-convexity of the $\ell_0$-norm leads to NP-hard computation. Researchers thus widely adopt convex $\ell_1$-regularization \cite{CW08,CD98,ZL12}, though it attenuates critical features and degrades practical performance \cite{FL01,Z10}. 
Based on this consideration, in terms of sparsity regularization, we still consider the most direct model, which is the $\ell_0$-regularization model.

In the other hand, the discontinuity of the $\ell_0$-norm may cause the model's solution to lose smoothness to a certain extent. Similar to sparsity, in numerous specific application scenarios, smoothness is a crucial property that characterizes the natural structure of solutions in the corresponding scene. Based on this, in order to balance sparsity and smoothness, Song et al. proposed non-separable  composite $\ell_0$-$\ell_2$ regularization model, which takes the following form:
\BE\label{intro}
\begin{array}{lc}
\min\limits_{\bu\in\Rn}  &  \displaystyle f(\bu)+\lambda\|W\bu\|_0+\frac{\beta}{2}\|W\bu\|^2, \\[2mm]
 \end{array}
\EE
where $\lambda$, $\beta>0$, $f(\bu)$ is a data fidelity term, $W:\R^n \mapsto\R^m$ is a predefined transform.

When $W=I$, \eqref{intro} reduces to the classical $\ell_0$ and $\ell_2$ regularization model with abundant existing solvers. Under the restricted isometry property (RIP), typical \(\ell_0\)-constrained least-squares solvers fall into greedy and thresholding categories. Greedy algorithms cover matching pursuit \cite{MZ93}, orthogonal matching pursuit \cite{DM94,TG07}, CoSaMP \cite{NT09} and subspace pursuit \cite{DM09}; thresholding methods include hard thresholding \cite{B12,BD08-2,BD09,F11}, soft thresholding \cite{D95,E06}, and heavy-ball hard thresholding pursuit \cite{SZ23}. Cai et al. \cite{CJ22} proposed a randomized alternating minimization algorithm for solving the sparse phase retrieval problem. When the measurement matrix satisfies the RIP, Zhao \cite{Z20} introduced the optimal $k$-thresholding algorithm, which was followed by a series of subsequent improved algorithms, including heavy-ball-type algorithms.

%

In model \eqref{intro}, the $\ell_0$-norm penalty acts on $W\bu$ rather than $\bu$. An invertible square W allows rewriting the model as a standard $\ell_0$-regularized problem penalizing $\by=W\bu$, solvable via existing approaches. However, in practical scenarios, $W$ is often non-square and non-invertible, invalidating conventional solvers. This core challenge is that while the $\ell_0$ proximal operator of $\bu$ has a closed-form, the counterpart for $\|W\bu\|_0$ is nonconvex, nonsmooth and lacks explicit solutions, greatly complicating optimization. To address this issue, Song et al. \cite{SL20} recently proposed an inexact augmented Lagrangian method for solving model \eqref{intro}. For solving the involved subproblem, they adopted an alternating scheme. However, this alternating scheme results in extremely high computational complexity for solving the subproblems, thereby severely impairing the efficiency of the entire algorithm. In terms of theoretical analysis, they established the convergence analysis of the algorithm for problems involving non-separable  composite $\ell_0$-$\ell_2$ regularization terms. Specifically, if the sequence generated by the algorithm is bounded, then there exists an accumulation point that is a KKT point of the equivalent problem, i.e., a local minimizer of the original problem.

In this paper, we focus on solving the non-separable  composite $\ell_0$-$\ell_2$ regularization model that arises in various inverse problems. Given the drawbacks of existing inexact augmented Lagrangian methods, together with intrinsic challenges including the non-convexity and non-smoothness of the $\ell_0$ norm as well as non-separability induced by linear transformation, we develop two novel augmented Lagrangian algorithms with exact multipliers. The first algorithm converges linearly to a KKT point for full row-rank $W$, while the second guarantees that every accumulation point is a KKT point for arbitrary $W$. A notable advantage shared by both algorithms is that all subproblems admit closed-form solutions. We further conduct comprehensive numerical experiments on synthetic datasets and practical real-world applications such as trend filtering and image smoothing to validate the performance of our methods. The experimental results demonstrate that our algorithms outperform the existing inexact augmented Lagrangian method in terms of accuracy and efficiency.

Throughout this paper, we use the following notation. Let $\Rmn$ denote the set of all $m\times n$ real matrices and $\Rn=\R^{n\times 1}$ the set of $n$-dimensional real vectors. The space  $\Rn$ is equipped with the Euclidean inner product $\langle \cdot,\cdot\rangle$ and the corresponding induced norm $\|\cdot\|$ (also denoting the spectral norm of a matrix). The superscript $\top$ represents the transpose of a vector or matrix. For a matrix $B$ and an index set $\mathcal{I}$, $B_{\mathcal{I}}$ stands for the submatrix consisting of the columns of $B$ indexed by $\mathcal{I}$.  The $\ell_0$-norm $\|\cdot\|_0$ counts the number of nonzero entries in a vector. The symbol $\|\cdot\|_\infty$ denotes the vector infinity norm. For any  $\bc=(c_1,\ldots,c_n)^\top\in\Rn$, $\supp(\bc):=\{j\in [n]\; | \; c_j\neq 0\}$, where $[n]:=\{1,\ldots,n\}$. Given a vector $\by$ and an index set $\mathcal{J}$, $\by_{\mathcal{J}}$ represents the subvector of $\by$ formed by the entries of $\by$ indexed by $\mathcal{J}$.
%

The rest of this paper is organized as follows. In Section \ref{sec:2}, we propose two augmented Lagrangian methods with exact multipliers to solve problem \eqref{intro}, which correspond to two different scenarios: one for the case where $W$ is full row rank and the other for the general case with an arbitrary matrix $W$. In Section \ref{sec:3}, we derive explicit solutions for all subproblems and establish convergence properties of the two proposed algorithms under mild assumptions. In Section \ref{sec:4},  we report some numerical tests to indicate the effectiveness of our methods. Finally, some concluding remarks are given in Section \ref{sec:5}.

\section{Augmented Lagrangian method with exact multipliers}\label{sec:2}
In this section, we will propose an augmented Lagrangian method with exact multipliers to solve problem \eqref{intro}, considering both the case where $W$ is full row rank and the general case where $W$ is an arbitrary matrix.

As is well known, for non-convex and non-smooth problems, there exists a close relationship between optimal points and KKT points. We often aim to find KKT points as a pathway to locating the optimal solution of such problems. 
First, we reformulate model \eqref{intro} into the following form by introducing an auxiliary variable:
\BE\label{equ:reformu}
\begin{aligned}
\min\limits_{\bu,\by}  & \quad  f(\bu)+\lambda\|\by\|_0+\frac{\beta}{2}\|W\bu\|^2, \\
\mbox{s.t.} &\quad \by=W\bu.
 \end{aligned}
\EE
The KKT conditions of the above problem is: a point $(\bu,\by)$ is called to a KKT point of problem \eqref{equ:reformu} if there exists a vector $\bz$ such that
\BE\label{KKTfir}
{\bf 0}=\nabla f(\bu)+\beta W^{\top}W\bu+W^{\top}\bz,
\EE
\BE\label{KKTsec}
{\bf 0}\in \partial \lambda\|\by\|_0-\bz,
\EE
\BE\label{KKTthi}
W\bu-\by={\bf 0}.
\EE
In this paper, we aim to find $\bu$, $\by$ and $\bz$ that satisfy the above equations. A classical infeasible method for solving such problems is the augmented Lagrangian method.
Given a parameter $\rho$, the augmented Lagrangian function for problem \eqref{equ:reformu} is
\begin{eqnarray}
L(\bu,\by,\bz)=f(\bu)+\langle W\bu-\by , \bz\rangle+\lambda\|\by\|_0 +\frac{\beta}{2}\|W\bu\|^2+\frac{\rho}{2}\|W\bu-\by\|^2.
\end{eqnarray}

Considering that \eqref{equ:reformu} is non-convex and non-smooth, the classical augmented Lagrangian method is not applicable. In what follows, we modify and enhance the classical augmented Lagrangian method to adapt it to different scenarios of problem \eqref{equ:reformu} corresponding to different $W$.

\subsection{Augmented Lagrangian method with exact multipliers for full-row-rank $W$}

Based on the equation \eqref{KKTfir} satisfied by the KKT points problem \eqref{equ:reformu}, we can treat $\bz$ as a function of the variable $\bu$. It can be observed that when $W$ is of full row rank, 
$\bz$ can be uniquely expressed as a function of $\bu$, i.e.,
\BE\label{equ:zk}
\bz(\bu)=(W^{\top})^{\dagger}(-\nabla f(\bu)-\beta W^{\top}W\bu ).
\EE
This implies that when $\bu$, $\bz$ and $\by$ satisfy the KKT conditions, $\bz$ and $\bu$ must adhere to the functional relationship $\bz = \bz(\bu)$. In other words, once $\bu$ is determined and satisfies the KKT conditions, $\bz$ is unique determined accordingly.

Based on this idea, when updating the multiplier $\bz$, we abandon the classical gradient-based update method for the multiplier $\bz$. Instead, we update $\bz$ using the functional value $\bz(\bu)$ corresponding to the updated $\bu$. Moreover, since we treat $\bz$ as a function of $\bu$, when evaluating the quality of iterative values and conducting convergence proofs, we adopt the following correspondingly defined function
\begin{equation}\label{hfullrank}
h(\bu,\by)=f(\bu)+\big\langle W\bu-\by , \bz(\bu)\big\rangle+\lambda\|\by\|_0 +\frac{\beta}{2}\|W\bu\|^2+\frac{\rho}{2}\|W\bu-\by\|^2
\end{equation}
instead of the original augmented Lagrangian function $L(\bu,\by,\bz)$ for subsequent convergence analysis.

Building on the above idea, we propose an augmented Lagrangian method with exact multipliers for full-row-rank $W$ to solve model \eqref{equ:reformu}, detailed in Algorithm \ref{algor:rlmem}.

\begin{algorithm}[!ht]
\caption{Augmented Lagrangian method with exact multipliers for full row rank  $W$} \label{algor:rlmem}
\begin{description}
\item [{\rm Step 0.}] Input $\lambda$ and $\beta$. Choose $\bu^0$, $\by^0=W\bu^0$, $\rho>0$. Let $k:=0$. Compute
    \[\bz^0=\bz(\bu^0)=(W^{\top})^{\dagger}(-\nabla f(\bu^0)-\beta W^{\top}W\bu^0 ).\]
\item [{\rm Step 1.}] Compute
\BE\label{rquesu}
{\bu}^{k+1}=\argmin_{\bu\in\Rn} \Big\{f(\bu)+\big\langle W\bu-\by^k , \bz^k\big\rangle +\frac{\beta}{2}\|W\bu\|^2+\frac{t_k}{2}\|\bu-\bu^k\|^2 +\frac{\rho}{2}\|W\bu-\by^k\|^2\Big\}.
\EE
\item [{\rm Step 2.}] Take $\bz^{k+1}=\bz(\bu^{k+1})=(W^{\top})^{\dagger}(-\nabla f(\bu^{k+1})-\beta W^{\top}W\bu^{k+1} )$ and compute
\BE\label{rquesy}
{\by}^{k+1}=\argmin_{\by\in\Rm} \Big\{\lambda \|\by\|_0+\big\langle W\bu^{k+1}-\by ,\bz^{k+1} \big\rangle +\frac{t_k}{2}\|\by-\by^k\|^2+\frac{\rho}{2}\|W\bu^{k+1}-\by\|^2\Big\}.
\EE
\item [{\rm Step 3.}]  Replace $k$ by $k+1$ and go to  Step 1.
\end{description}
\end{algorithm}

\subsection{Augmented Lagrangian method with exact multipliers for general case of $W$}

Next, we extend the idea of the augmented Lagrangian method with exact multipliers to the general case of $W$. However, when $W$ is not full row rank, we find that the solution to the equation
\eqref{KKTfir} is not unique. Additionally, $\bz$ must satisfy \eqref{KKTsec} with ${\bf 0}=\br -\bz$, where $\br \in \partial \lambda\|\by\|_0$.

According to existing results in literature \cite{SL20}, it is known that
\[
\partial \lambda \|\by\|_0=(r_1, \ldots, r_m)^{\top}\quad \mbox{with}\quad
r_i=
\left\{
\begin{array}{ll}
\{ 0 \},\quad &\mbox{if}~ y_i\neq 0,\\
\mathbb{R},\quad& \mbox{if}~y_i=0.
\end{array}
\right.
\]
Therefore, when $y_i \neq 0$, $z_i=0$ and $y_i = 0$, $z_i\in\R$. Let
\BE\label{equ:z}
\left\{
\begin{array}{ll}
(\bz(\bu,\by))_{\mathcal{C}}=((W^{\top})_{\mathcal{C}})^\dagger(-\nabla f(\bu)-\beta W^{\top}W\bu),&\quad \mathcal{C}=\{i | y_i=0, i\in [m]\},\\
(\bz(\bu,\by))_{\mathcal{I}}={\bf 0},&\quad \mathcal{I}=\{i | y_i\neq 0, i\in [m]\}.
\end{array}
\right.
\EE
When $(\bu,\by)$ is a KKT point, it can be confirmed that $\bz=\bz(\bu,\by)$ is a solution satisfying equations \eqref{KKTfir} and \eqref{KKTsec}. This implies that if $(\bu,\by)$ is either an optimal solution or a KKT points, then $(\bu, \by, \bz)$ satisfies the KKT conditions with $\bz=\bz(\bu,\by)$.

Based on the definition of $\bz(\bu,\by)$, it is not only related to $\bu$, but also depends on the positions of the non-zero elements of $\by$. Specifically, $\bz(\bu,\by)$ is a discontinuous, step function with respect to $\by$. Owing to the modified definition of the multiplier function $\bz(\bu,\by)$, the function characterizing the quality of the sequences generated by the algorithm is adjusted accordingly to:
\[
h(\bu,\by)=f(\bu)+\langle W\bu-\by , \bz(\bu,\by)\rangle+\lambda\|\by\|_0 +\frac{\beta}{2}\|W\bu\|^2+\frac{\rho}{2}\|W\bu-\by\|^2,
\]
where $\bz(\bu,\by)$ follows equation \eqref{equ:z}. However, if we still use $\bz=\bz(\bu,\by)$ to update the multiplier, we need to propose an algorithm that generates sequences to overcome the issue of jump points, which may cause non-monotonic decrease in the $h(\bu,\by)$. We divide the update of the sequence and the corresponding function values into two steps: (1) Compute update the value of $\bu$ to obtain the corresponding new function value $h(\bu^{k+1},\by^k)$; (2) Compute update the value of $\by$ to obtain the corresponding new function value $h(\bu^{k+1},\by^{k+1})$. We observe that when $\by$ is fixed, $\bz(\bu,\by)$ is continuous with respect to $\bu$, the function $h(\bu, \cdot)$ is continuous. Therefore, using a simple proximal point method for $\bu$ can ensure a monotonic decrease in the updated function value. When $\bu$ is fixed, the function $h(\bu,\by)$ exhibits jump discontinuities. Consequently, to ensure a monotonic decrease in the function value, it is necessary to minimize $h(\bu,\by)$ with fixed $\bu$ to update $\by$.

\begin{algorithm}[!ht]
	\caption{Augmented Lagrangian method with exact multipliers for general case of $W$} \label{algor:clmem}
	\begin{description}
		\item [{\rm Step 0.}] Input $\lambda$ and $\beta$. Choose $\bu^0$, $\by^0=W\bu^0$, $\rho>0$, $t_k>0$. Let $k:=0$.
		\item [{\rm Step 1.}] Take $\mathcal{C}^k=\{i | y_i^k=0, i\in [n]\}$,  $\mathcal{I}^k=\{i | y_i^k\neq 0, i\in [n]\}$, $(\bz(\bu^k,\by^k))_{\mathcal{C}^k}=((W^{\top})_{\mathcal{C}^k})^\dagger(-\nabla f(\bu^k)-\beta W^{\top}W\bu^k)$ and $(\bz(\bu^k,\by^k))_{\mathcal{I}^k}={\bf 0}$.
		\begin{align}\label{cquesu}
			{\bu}^{k+1} = \argmin_{\bu\in\Rm} \Big\{&f(\bu)+\big\langle W\bu-\by^k , \bz(\bu^k,\by^k) \big\rangle +\frac{\beta}{2}\|W\bu\|^2+\frac{t_k}{2}\|\bu-\bu^k\|^2\nonumber\\
			&+\frac{\rho}{2}\|W\bu-\by^k\|^2\Big\}.
		\end{align}
		\item  [{\rm Step 2.}] Compute
		\BE\label{cquesy}
		{\by}^{k+1}=\argmin_{\by\in\Rn} h(\bu^{k+1},\by).
		\EE
		\item [{\rm Step 3.}]  Replace $k$ by $k+1$ and go to  Step 1.
	\end{description}
\end{algorithm}

Based on the above analysis, we propose an augmented Lagrangian method with exact multipliers for general case of $W$ to solve problem \eqref{equ:reformu}, detailed in Algorithm \ref{algor:clmem}.


\section{Convergence analysis}\label{sec:3}

In this section, we will separately present the convergence analysis of the augmented Lagrangian method with exact multipliers for different scenarios of the matrix $W$.  

\subsection{Solution of subproblems}
Since the specific form of $f$ is crucial to the expression of its solution in the subproblems of Algorithms \ref{algor:rlmem}--\ref{algor:clmem}, particularly for the $\bu$-subproblem. Therefore, we have investigated different choices of data fidelity terms in various inverse problems, such as $f=\frac12\|\bu-\bu_0\|^2$ in image smoothing and $f=\frac12\|A\bu-\bu_0\|^2$ in CT image reconstruction. Considering the generality of the problem, we set $f=\frac12\|A\bu-\bu_0\|^2$ and derive the solution expression form for the subproblem accordingly, along with the subsequent convergence analysis.

We summarize the explicit expression of the globally optimal solutions to the subproblems defined in \eqref{rquesu} and \eqref{rquesy} of Algorithm \ref{algor:rlmem} as Theorem \ref{thesolu}.

\begin{theorem}\label{thesolu}
Let $\bu^k,\by^k$ be the current iterate generated by Algorithm \ref{algor:rlmem}, then the global optimal solution $\bu^{k+1}$ of subproblem \eqref{rquesu} and a global optimal solution of subproblem \eqref{rquesy} are
\BE\label{solutionru}
\bu^{k+1}=(A^{\top}A+(\beta+\rho) W^{\top}W+t_kI)^{-1}(A^{\top}\bu_0-W^{\top}\bz^k+\rho W^{\top}\by^k+t_k\bu^k)
\EE
and
\BE\label{solutionry}
\by^{k+1}=\ch_{\sqrt{2\lambda/(\rho+t_k)}}(\frac{\rho}{\rho+t_k}W\bu^{k+1}+\frac{t_k}{\rho+t_k}\by^k+\frac{1}{\rho+t_k}\bz^{k+1}),
\EE
where $\mathcal{H}$ is the hard thresholding operator, i.e., $\mathcal{H}_\lambda(s)=s$ if $|s|>\lambda$, $\mathcal{H}_\lambda(s)=0$ if $|s|\leq\lambda$.
\end{theorem}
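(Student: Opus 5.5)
The plan is to treat the two subproblems separately, using $f(\bu)=\frac12\|A\bu-\bu_0\|^2$.

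For \eqref{rquesu}, I will show that the objective is a strongly convex quadratic in $\bu$. Its Hessian is
\[
A^{\top}A+(\beta+\rho)W^{\top}W+t_kI .
\]
This matrix is symmetric positive definite because $t_k>0$ and the other two terms are positive semidefinite. Strong convexity then gives a unique global minimizer, characterized by the first-order condition
\[
A^{\top}(A\bu-\bu_0)+W^{\top}\bz^k+\beta W^{\top}W\bu+t_k(\bu-\bu^k)+\rho W^{\top}(W\bu-\by^k)={\bf 0}.
\]
Collecting the terms in $\bu$ on the left gives a linear system. Inverting the positive definite matrix yields exactly \eqref{solutionru}.

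For \eqref{rquesy}, I will complete the square so the subproblem becomes a scaled $\ell_0$ proximal problem. The quadratic and linear terms in $\by$ are
\[
-\langle \by,\bz^{k+1}\rangle+\frac{t_k}{2}\|\by-\by^k\|^2+\frac{\rho}{2}\|W\bu^{k+1}-\by\|^2 .
\]
Up to constants independent of $\by$, these equal $\frac{\rho+t_k}{2}\|\by-\bv\|^2$, where
\[
\bv=\frac{\rho}{\rho+t_k}W\bu^{k+1}+\frac{t_k}{\rho+t_k}\by^k+\frac{1}{\rho+t_k}\bz^{k+1}.
\]
To verify $\bv$, I will match the coefficient of $\by$: it is $-(\bz^{k+1}+t_k\by^k+\rho W\bu^{k+1})$, which must equal $-(\rho+t_k)\bv$.

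The subproblem is therefore equivalent to
\[
\min_{\by}\ \lambda\|\by\|_0+\frac{\rho+t_k}{2}\|\by-\bv\|^2 .
\]
This problem separates coordinatewise into $\min_{y_i}\,\lambda\,\mathbf{1}[y_i\neq0]+\frac{\rho+t_k}{2}(y_i-v_i)^2$, so only two candidates need comparing for each $i$. Taking $y_i=0$ costs $\frac{\rho+t_k}{2}v_i^2$. Taking $y_i\neq0$ is best at $y_i=v_i$ and costs $\lambda$. Hence $y_i=v_i$ is optimal when $v_i^2>2\lambda/(\rho+t_k)$ and $y_i=0$ is optimal otherwise. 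This is exactly $\ch_{\sqrt{2\lambda/(\rho+t_k)}}(v_i)$, which gives \eqref{solutionry}.

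The main subtlety is the tie case $|v_i|=\sqrt{2\lambda/(\rho+t_k)}$. There both choices attain the same value, which is why the statement says "a global optimal solution" for the $\by$-subproblem rather than "the" solution. With the convention $\mathcal{H}_\lambda(s)=0$ for $|s|\le\lambda$, the formula selects one of the minimizers.

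Everything else reduces to routine algebra checking the completed square.
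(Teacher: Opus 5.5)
Your proposal is correct and follows the paper's proof. The first-order condition of the strongly convex $\bu$-subproblem gives \eqref{solutionru}, and completing the square reduces the $\by$-subproblem to a scaled $\ell_0$ proximal problem solved by hard thresholding; you also fill in details the paper leaves implicit (positive definiteness from $t_k>0$, the coordinatewise comparison, the tie case), with only the nit that when $v_i=0$ the branch $y_i\neq 0$ has infimum $\lambda$ but no minimizer, which does not change the conclusion.
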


\begin{proof}
By the optimal condition of \eqref{rquesu}, $\bu^{k+1}$ satisfies
\[
A^{\top}(A\bu^{k+1}-\bu_0)+W^{\top}\bz^k+\beta W^{\top}W\bu^{k+1}+t_k(\bu^{k+1}-\bu^k)+\rho W^{\top}(W\bu^{k+1}-\by^k)={\bf 0},
\]
which implies \eqref{solutionru}.

Next, by \eqref{rquesy}, we have
\begin{eqnarray*}
&& \argmin_{\by\in\Rm} \{\lambda \|\by\|_0+\langle W\bu^{k+1}-\by, \bz^{k+1} \rangle +\frac{t_k}{2}\|\by-\by^k\|^2+\frac{\rho}{2}\|W\bu^{k+1}-\by\|^2\}\\
&=& \argmin_{\by\in\Rm} \{\lambda \|\by\|_0 +\frac{\rho+t_k}{2}\|\by-\frac{\rho}{\rho+t_k}W\bu^{k+1}-\frac{t_k}{\rho+t_k}\by^k-\frac{1}{\rho+t_k}\bz^{k+1}\|^2\}
\end{eqnarray*}
and therefore $\by^{k+1}=\ch_{\sqrt{2\lambda/(\rho+t_k)}}(\frac{\rho}{\rho+t_k}W\bu^{k+1}+\frac{t_k}{\rho+t_k}\by^k+\frac{1}{\rho+t_k}\bz^{k+1})$ is a global optimal solution of subproblem \eqref{rquesy}.
\end{proof}

For the explicit solutions to subproblems \eqref{cquesu}--\eqref{cquesy} in Algorithm \ref{algor:clmem}, we have organized them as follows:
\begin{theorem}\label{solutionguy}
Let $\{\bu^k,\by^k\}$ be a bounded sequence generated by Algorithm \ref{algor:clmem}. Define 
\BE\label{equ:M}
M:=\sup \{\|((W^{\top})_{\mathcal{C}})^\dagger(-\nabla f(\bu^k)-\beta W^{\top}W\bu^k)\|_{\infty}, ~\forall~\cc \subset [m], \; k=1,2, \ldots \}.
\EE
Suppose $\rho>\frac{m M^2}{\lambda}$. For a sufficiently large $k$, assume that no component of sequence $\{|W\bu^k|\}$ lie within the small interval $\Big(\frac{-M+\sqrt{M^2+\rho \lambda/m}}{\rho}, \frac{M+\sqrt{M^2+2\rho (m+1) \lambda}}{\rho}\Big)$. Then,\\
(i) the global optimal solution $\bu^{k+1}$ of subproblem \eqref{cquesu} is
\BE\label{equ:uk}
\bu^{k+1}=(t_kI+(\beta+\rho)W^{\top}W+A^{\top}A)^{-1}(A^{\top}\bu_0-W^{\top}\bz(\bu^k,\by^k)+t_k\bu^k+\rho W^{\top}\by^k),
\EE
where
\BE\label{equ:zuk}
\left\{
\begin{array}{ll}
(\bz(\bu^k,\by^k))_{\cc^k}=((W^{\top})_{\mathcal{C}^k})^\dagger(-\nabla f(\bu^k)-\beta W^{\top}W\bu^k),&\quad \mathcal{C}^k=\{i | y_i^k=0, i\in [m]\},\\
(\bz(\bu^k,\by^k))_{\ci^k}={\bf 0},&\quad \mathcal{I}^k=\{i | y_i^k\neq0, i\in [m]\};
\end{array}
\right.
\EE
(ii) a global optimal solution $\by^{k+1}$ of subproblem \eqref{cquesy} is
\BE\label{equ:yk}
y_i^{k+1}=\left\{
\begin{array}{ll}
(W\bu^{k+1})_i, & \mbox{if}~~ |(W\bu^{k+1})_i|\ge \frac{M+\sqrt{M^2+2\rho (m+1) \lambda}}{\rho},\\[2mm]
0, & \mbox{if}~~|(W\bu^{k+1})_i|\le \frac{-M+\sqrt{M^2+\rho \lambda/m}}{\rho}.
\end{array}
\right.
\EE
\end{theorem}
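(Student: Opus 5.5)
Part (i) follows the proof of Theorem \ref{thesolu}. With $f=\frac12\|A\bu-\bu_0\|^2$ and $\bz(\bu^k,\by^k)$ fixed (it depends only on the previous iterate), the objective in \eqref{cquesu} is a strongly convex quadratic in $\bu$, since $t_k>0$. Setting its gradient to zero gives
\[
A^{\top}(A\bu^{k+1}-\bu_0)+W^{\top}\bz(\bu^k,\by^k)+\beta W^{\top}W\bu^{k+1}+t_k(\bu^{k+1}-\bu^k)+\rho W^{\top}(W\bu^{k+1}-\by^k)={\bf 0}.
\]
The coefficient matrix $t_kI+(\beta+\rho)W^{\top}W+A^{\top}A$ is positive definite, so solving this system yields \eqref{equ:uk}, with $\bz$ given by \eqref{equ:zuk} as in Step 1.

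For part (ii), fix $\bv:=W\bu^{k+1}$ and minimize $h(\bu^{k+1},\by)$ over $\by$. The plan is to split by support. For a candidate support $\ci$ with complement $\cc$, the multiplier $\bz(\bu^{k+1},\by)$ vanishes on $\ci$ and is bounded by $M$ on $\cc$ in the $\infty$-norm, by the definition \eqref{equ:M}. Within a fixed support, the best choice is $y_i=v_i$ for $i\in\ci$, because the term $\langle \bv-\by,\bz\rangle$ carries no $\ci$-components. The value is then
\[
h=\lambda|\ci|+\sum_{i\in\cc}\Big(v_iz_i+\frac{\rho}{2}v_i^2\Big)+\mbox{const}.
\]
We compare the optimal $\by^*$ with a modification that changes the status of a single index $i$.

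If $|v_i|$ is at or below the lower threshold $\tau_1=\frac{-M+\sqrt{M^2+\rho\lambda/m}}{\rho}$, then $\rho\tau_1^2/2+M\tau_1\le\lambda/(2m)$. Suppose the minimizer kept $i$ in $\ci$. Moving $i$ to $\cc$ saves $\lambda$ from the $\ell_0$ term. It may also change the whole vector $\bz$ on the new $\cc$, because the pseudoinverse depends on $\cc$. The extra cost is therefore at most $\sum_{j\in\cc}(M|v_j|+\frac{\rho}{2}v_j^2)$ for the new and old $\cc$ taken together.

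Here the gap assumption is needed. Every $|v_j|$ is either $\le\tau_1$ or $\ge\tau_2$. An optimal $\cc$ should contain only small indices, by the next argument. Granting that, the total change is at most $2m\cdot\lambda/(2m)=\lambda$, since the sum runs over at most $m$ indices, each contributing at most $\lambda/(2m)$ in each of the two sums. Hence zeroing $i$ does not increase $h$.

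If $|v_i|\ge\tau_2=\frac{M+\sqrt{M^2+2\rho(m+1)\lambda}}{\rho}$, then $\frac{\rho}{2}v_i^2-M|v_i|\ge(m+1)\lambda$. Setting $y_i=0$ costs at least this much in the $i$-th term of $\sum_{i\in\cc}$. The $\ell_0$ term saves at most $\lambda$. The other entries of $\bz$ may shift, but their change can be bounded by $m\lambda$ using the small-index bound above, or absorbed by comparing with any configuration. So such an $i$ must lie in $\ci$. The condition $\rho>mM^2/\lambda$ guarantees $\tau_1<\tau_2$ and keeps the constants consistent, giving $\rho\tau_1\ge$ an appropriate multiple of $M$.

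The main obstacle is the non-separability: changing the support alters $\bz_{\cc}$ globally through $((W^{\top})_{\cc})^\dagger$. The argument must therefore compare whole configurations, using only the uniform bound $M$ rather than coordinatewise optimality. I would structure it as follows. First, show that any configuration with a large index in $\cc$ is strictly beaten by the one with that index moved to $\ci$. Second, among configurations with only small indices in $\cc$, show that putting every small index into $\cc$ is optimal, using the uniform $\lambda/(2m)$ bound per index. The factors $m$ and $m+1$ in the thresholds are calibrated exactly for this summed comparison. The remaining care is in the bookkeeping of these sums.
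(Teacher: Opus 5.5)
\textbf{Gap in the large-index exchange step of part (ii).} Part (i) is fine. So is your observation that, for a fixed support, the best choice is $y_i=(W\bu^{k+1})_i$ on $\ci$.

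The gap is the step you list first in part (ii): that a configuration with a large index $i$ (i.e.\ $|v_i|\ge\tau_2$) in $\cc$ is strictly beaten by the one with \emph{only that index} moved to $\ci$. Removing $i$ from $\cc$ changes $((W^{\top})_{\cc})^\dagger$. Every $z_j$ with $j\in\cc\setminus\{i\}$ can therefore move by up to $2M$, and the cost change contains $\sum_{j\in\cc\setminus\{i\}}v_j(z_j^{\mathrm{new}}-z_j^{\mathrm{old}})$. Your estimate $2M|v_j|\le 2M\tau_1\le\lambda/m$ controls this only for small $j$.

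If other large indices stay in $\cc$, their values $|v_j|\ge\tau_2$ have no upper bound. With nothing but $|z_j|\le M$, the term $2M|v_j|$ can then exceed the margin $\frac{\rho}{2}v_i^2-M|v_i|-\lambda\ge m\lambda$ gained at $i$; take $|v_i|=\tau_2$ and some $|v_j|$ much larger. So this single-index exchange cannot be proved from the uniform bound $M$. Your second step is explicitly conditioned on the first (``Granting that\ldots''), so it loses its premise too.

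A side remark: $\tau_1<\tau_2$ holds for every $M\ge 0$, so that is not what $\rho>mM^2/\lambda$ is for. The paper uses this hypothesis to get $\frac{\rho}{2}x^2-M|x|\ge-\frac{M^2}{2\rho}>-\frac{\lambda}{2m}$ on small zeroed indices.

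\textbf{How to repair it.} Never compare $\bz$ across two different supports; bound each configuration's value on its own, which is what the paper does.
For an arbitrary $\by$, index $i$ contributes:
\begin{itemize}
\item at least $\lambda$ if $y_i\neq 0$;
\item at least $(m+1)\lambda$ if $y_i=0$ and $i$ is large;
\item at least $-\frac{\lambda}{2m}$ if $y_i=0$ and $i$ is small.
\end{itemize}
For the candidate $\tilde{\by}$ of \eqref{equ:yk}, each large index contributes exactly $\lambda$, and each small one at most $M|v_i|+\frac{\rho}{2}v_i^2\le\frac{\lambda}{2m}$.

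Let $s_2$ be the number of small indices. Summing gives $h(\bu^{k+1},\by)-h(\bu^{k+1},\tilde{\by})\ge m\lambda-\frac{s_2}{m}\lambda>0$ if $\by$ zeroes some large index (then $s_2\le m-1$). It gives $h(\bu^{k+1},\by)-h(\bu^{k+1},\tilde{\by})\ge(1+\frac{1}{2m})\lambda-\frac{s_2}{m}\lambda\ge\frac{\lambda}{2m}$ if $\by$ zeroes no large index but keeps a small one in its support. When the supports coincide, $\bz(\bu^{k+1},\by)=\bz(\bu^{k+1},\tilde{\by})$, and your fixed-support observation finishes the proof.

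Equivalently, your exchange scheme works if in the first step you move \emph{all} large indices out of $\cc$ at once. Then only small indices remain in $\cc$. Their $z$-shifts cost at most $(m-1)\lambda/m$, while each moved index gains at least $m\lambda$.

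The direct comparison with $\tilde{\by}$ also avoids presupposing that a minimizer exists, which your phrasing (``suppose the minimizer kept $i$'') does. That existence is not automatic, since $h(\bu^{k+1},\cdot)$ jumps when the support changes.
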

\begin{proof}
(i) Since subproblem \eqref{cquesu} is a strongly convex problem, it has exactly one unique optimal solution. From the optimality conditions of \eqref{cquesu}, we have
\[
A^{\top}(A\bu^{k+1}-\bu_0)+W^{\top}\bz(\bu^k,\by^k)+\beta W^{\top} W\bu^{k+1}+t_k(\bu^{k+1}-\bu^k)+\rho W^{\top}(W\bu^{k+1}-\by^k)={\bf 0}.
\]
We can readily derive the solution to the above equation that satisfies \eqref{equ:uk}, where $\bz(\bu^k,\by^k)$ defined as in \eqref{equ:zuk}.

(ii) For subproblem \eqref{cquesy}, we have
\begin{eqnarray*}
&&{\by}^{k+1}=\argmin_{\by\in\Rn} h(\bu^{k+1},\by) \\
\Leftrightarrow~~  &&{\by}^{k+1}=\argmin_{\by\in\Rn} \{ \lambda \|\by\|_0+\big\langle W\bu^{k+1}-\by , \bz(\bu^{k+1},\by) \big\rangle +\frac{\rho}{2}\|W\bu^{k+1}-\by\|^2\}.
\end{eqnarray*}
Let $\cj_1=\{j\in[m] |\; |(W\bu^{k+1})_j|\ge \frac{M+\sqrt{M^2+2\rho (m+1) \lambda}}{\rho}\}$, $\cj_2=\{j\in[m] |\; |(W\bu^{k+1})_j|\le \frac{-M+\sqrt{M^2+\rho \lambda/m}}{\rho}\}$, and $\cj_1\cup\cj_2=[m]$.
Pick an arbitrary $\by\in\R^m$, define $
\phi_i=\lambda \|\by_i\|_0+(W\bu^{k+1}-\by)_i(\bz(\bu^{k+1},\by))_i +\frac{\rho}{2}(W\bu^{k+1}-\by)_i^2$, let $\ct_1=\{i\in [m]| y_i\neq 0\}$, $\ct_2=\{i\in [m]| y_i= 0\}$, then $\ct_1\cup\ct_2=[m]$ and we have
\begin{align*}
&\lambda \|\by\|_0+\big\langle W\bu^{k+1}-\by , \bz(\bu^{k+1},\by) \big\rangle +\frac{\rho}{2}\|W\bu^{k+1}-\by\|^2\nonumber\\
=& \sum_{i\in\cj_1\cap \ct_1}\phi_i
+\sum_{i\in\cj_1\cap \ct_2}\phi_i
+\sum_{i\in\cj_2\cap\ct_1}\phi_i
+ \sum_{i\in\cj_2\cap \ct_2}\phi_i\nonumber\\
=& \sum_{i\in\cj_1\cap \ct_1}\left[\lambda+\frac{\rho}{2}(W\bu^{k+1}-\by)_i^2\right]+\sum_{i\in\cj_1\cap \ct_2}\left[(W\bu^{k+1})_i(\bz(\bu^{k+1},\by))_i +\frac{\rho}{2}(W\bu^{k+1})_i^2\right]\nonumber\\
&+\sum_{i\in\cj_2\cap\ct_1}\left[\lambda+\frac{\rho}{2}(W\bu^{k+1}-\by)_i^2\right]+ \sum_{i\in\cj_2\cap \ct_2}\left[(W\bu^{k+1})_i(\bz(\bu^{k+1},\by))_i +\frac{\rho}{2}(W\bu^{k+1})_i^2\right]\nonumber\\
\ge& \underbrace{\sum_{i\in\cj_1\cap \ct_1}\lambda}_{A_1}+\underbrace{\sum_{i\in\cj_1\cap \ct_2}\left[-|(W\bu^{k+1})_i|M +\frac{\rho}{2}(W\bu^{k+1})_i^2\right]}_{A_2}\nonumber\\
&+\underbrace{\sum_{i\in\cj_2\cap\ct_1}\lambda}_{A_3}+ \underbrace{\sum_{i\in\cj_2\cap \ct_2}\left[-|(W\bu^{k+1})_i|M +\frac{\rho}{2}(W\bu^{k+1})_i^2\right]}_{A_4}, \nonumber
\end{align*}
where the inequality follows from equation \eqref{equ:M}. Then, for $A_4$, we have
\begin{eqnarray*}
A_4&=&
\sum_{i\in\cj_2\cap \ct_2} \left[\frac{\rho}{2}(W\bu^{k+1})_i^2-|(W\bu^{k+1})_i|M\right]
\ge \sum_{i\in\cj_2\cap \ct_2} \left(\frac{\rho}{2}\frac{M^2}{\rho^2}-\frac{M}{\rho}M\right)\\
&=&\sum_{i\in\cj_2\cap \ct_2} -\frac{M^2}{2\rho}\ge
\sum_{i\in\cj_2\cap \ct_2} - \frac{\lambda}{2m},
\end{eqnarray*}
where the first inequality follows from the fact that the quadratic function $\frac\rho2x^2-M|x|$ attains its minimum at $x=\frac M\rho$, the last inequality follows from the assumption that $\rho>\frac{m M^2}{\lambda}$. Let $|\ct_1|=t_1$, $|\ct_2|=t_2$, $|\cj_1|=s_1$, $|\cj_2|=s_2$, and $|\cj_1\cap \ct_1|=p_1$. Then $t_1+t_2=s_1+s_2$ and
\begin{eqnarray*}
&&A_1+A_2+A_3+A_4\\
&\geq&\sum_{i\in\cj_1\cap \ct_1}\lambda+\sum_{i\in\cj_1\cap \ct_2}(-|(W\bu^{k+1})_i|M +\frac{\rho}{2}(W\bu^{k+1})_i^2)+\sum_{i\in\cj_2\cap\ct_1}\lambda+ \sum_{i\in\cj_2\cap \ct_2}- \frac{\lambda}{2m}\\
&\ge& p_1\lambda+(s_1-p_1)(m+1)\lambda
+(t_1-p_1)\lambda+(-\frac{\lambda}{2m})(s_2+p_1-t_1),\\
\end{eqnarray*}
where the last inequality follows from the fact that $\min\{\frac\rho2x^2-M|x|\}=(m+1)\lambda$ in $|x|\geq\frac{M+\sqrt{M^2+2\rho (m+1) \lambda}}{\rho}$.

Similarly, if we take by $\tilde{\by}=\by^{k+1}$ and its components are shown in \eqref{equ:yk}, denote $
\tilde{\phi}_i=\lambda \|\tilde{\by}_i\|_0+(W\bu^{k+1}-\tilde{\by})_i(\bz(\bu^{k+1},\tilde{\by}))_i +\frac{\rho}{2}(W\bu^{k+1}-\tilde{\by})_i^2$, then we have
\begin{align*}
&\lambda \|\tilde{\by}\|_0+\big\langle W\bu^{k+1}-\tilde{\by} , z(\bu^{k+1},\tilde{\by}) \big\rangle +\frac{\rho}{2}\|W\bu^{k+1}-\tilde{\by}\|^2\\
=& \sum_{i\in\cj_1\cap \ct_1}\tilde{\phi}_i+\sum_{i\in\cj_1\cap \ct_2}\tilde{\phi}_i
+\sum_{i\in\cj_2\cap\ct_1}\tilde{\phi}_i
+ \sum_{i\in\cj_2\cap \ct_2}\tilde{\phi}_i\\
\le& \sum_{i\in\cj_1\cap \ct_1}\lambda+\sum_{i\in\cj_1\cap \ct_2}\lambda+\sum_{i\in\cj_2\cap\ct_1}\left[|(W\bu^{k+1})_i|M +\frac{\rho}{2}(W\bu^{k+1})_i^2\right]\\
&+ \sum_{i\in\cj_2\cap \ct_2}\left[|(W\bu^{k+1})_i|M+\frac{\rho}{2}(W\bu^{k+1})_i^2\right]\\
\le& p_1\lambda+(s_1-p_1)\lambda+(t_1-p_1)\frac{\lambda}{2m} +(s_2+p_1-t_1)\frac{\lambda}{2m},
\end{align*}
where the last inequality follows from the fact that $\max\{\frac\rho2x^2-M|x|\}=\frac{\lambda}{2m}$ in $|x|\le \frac{-M+\sqrt{M^2+\rho \lambda/m}}{\rho}$.

We consider the difference between the function values of \eqref{cquesy} at any $\by$ and $\tilde{\by}$ given by \eqref{equ:yk},
\begin{align}
&h(\bu^{k+1},\by)-h(\bu^{k+1},\tilde{\by})\nonumber\\
=&\lambda \|\by\|_0+\big\langle W\bu^{k+1}-\by , \bz(\bu^{k+1},\by) \big\rangle +\frac{\rho}{2}\|W\bu^{k+1}-\by\|^2\nonumber\\
&-(\lambda \|\tilde{\by}\|_0+\big\langle W\bu^{k+1}-\tilde{\by} , z(\bu^{k+1},\tilde{\by}) \big\rangle +\frac{\rho}{2}\|W\bu^{k+1}-\tilde{\by}\|^2)\nonumber\\
\ge& p_1\lambda+(s_1-p_1)(m+1)\lambda+(t_1-p_1)\lambda+(-\frac{\lambda}{2m})(s_2+p_1-t_1)\nonumber\\
&-( p_1\lambda+(s_1-p_1)\lambda+(t_1-p_1)\frac{\lambda}{2m}+(s_2+p_1-t_1)\frac{\lambda}{2m})\nonumber\\
=&(s_1-p_1)m\lambda+(t_1-p_1)\lambda-(2s_2+p_1-t_1)\frac{\lambda}{2m}.\label{equ:s1p1}
\end{align}

Next, we prove that the necessary condition for $\by$ to be the minimum point of subproblem \eqref{cquesy} is $\cj_1=\ct_1$ and $\cj_2=\ct_2$, i.e. $s_1=p_1=t_1$.

First, we divide the relationship between $s_1$ and $p_1$ into two cases: $s_1 - p_1 \ge 1$ and $s_1 = p_1$. When $s_1 - p_1 \ge 1$, we have
\begin{equation}
h(\bu^{k+1},\by)-h(\bu^{k+1},\tilde{\by})\ge m\lambda+(t_1-p_1)\lambda(1+\frac{1}{2m})-\frac{\lambda}{m}s_2
\ge m\lambda-\frac{\lambda}{m}s_2>0,\nonumber
\end{equation}
where the second inequality follows from $t_1-p_1\ge0$, and the last inequality holds by virtue of $s_2<m$. From the above equation, it can be observed that when $s_1 - p_1 \ge 1$, $h(\bu^{k+1},\by) - h(\bu^{k+1},\tilde{\by}) > 0$. This implies that the minimum point cannot be attained in the case of $s_1 - p_1 \ge 1$, so the minimum point must satisfy $s_1 = p_1$, i.e., $\cj_1\cap\ct_2=\varnothing$.

On the basis of $s_1=p_1$, we also divide the relationship between $p_1$ and $t_1$ into two cases: $t_1-p_1\ge1$ and $t_1=p_1$. When $t_1-p_1\ge1$, then $|\cj_2\cap\ct_2|=s_2+p_1-t_1=s_2+s_1-t_1=m-t_1=t_2=|\ct_2|$, from which we readily derive that
\begin{eqnarray*}
&&h(\bu^{k+1},\by)-h(\bu^{k+1},\tilde{\by})\\
&\ge&p_1\lambda+(t_1-p_1)\lambda+(-\frac{\lambda}{2m})t_2-( p_1\lambda+(t_1-p_1)\frac{\lambda}{2m}+t_2\frac{\lambda}{2m})\\
&=&(t_1-p_1)\lambda-  t_2\frac{\lambda}{2m}-(m-p_1)\frac{\lambda}{2m}= (t_1-p_1)\lambda-(m+t_2-p_1)\frac{\lambda}{2m}>0,
\end{eqnarray*}
where the last inequality follows from $t_1-p_1\ge1$ and $m+t_2-p_1<2m$. Similarly, this also indicates that the minimum point cannot be attained when $t_1 - p_1 \ge 1$; thus, the minimum point must satisfy $t_1 = p_1$, i.e., $\cj_2\cap\ct_1=\varnothing$. Combining the result $s_1 = p_1$, we readily obtain $s_1 = p_1 = t_1$ and $s_2 = t_2$; that is to say, $\mathcal{J}_1 = \mathcal{T}_1$ and $\mathcal{J}_2 = \mathcal{T}_2$.

%

To summarize, a necessary condition for $\by$ to be a minimum point is
\[
\mathcal{J}_1 = \mathcal{T}_1 \quad\mbox{and}\quad\mathcal{J}_2 = \mathcal{T}_2.
\]
In other words, when $|(W\bu^{k+1})_i|\ge (M+\sqrt{M^2+2\rho (m+1) \lambda})/{\rho}$, the $i$-th of the optimal solution of subproblem \eqref{cquesy} at the $(k+1)$-th step is non-zero, i.e., $y_i^{k+1}\neq0$; when $|(W\bu^{k+1})_i|\le (-M+\sqrt{M^2+\rho \lambda/m})/\rho$, the $i$-th of the optimal solution subproblem \eqref{cquesy} at the $(k+1)$-th step is zero, i.e. $y_i^{k+1}=0$.

The above has determined the positions where the components of the optimal solutions of the subproblems \eqref{cquesy} are non-zero. Next, we determine the corresponding value of the components when the $i$-th component of the optimal solutions of the subproblems \eqref{cquesy} are non-zero.

By the \eqref{cquesy}, we have
\BE\label{findsolutiony}
{\by}^{k+1}=\argmin_{\by\in\Rn}\{\lambda \|\by\|_0+\big\langle W\bu^{k+1}-\by , \bz(\bu^{k+1},\by) \big\rangle +\frac{\rho}{2}\|W\bu^{k+1}-\by\|^2\}.
\EE
If $y_i^{k+1}\neq 0$, we have $(\bz(\bu^{k+1},\by))_i=0$. Then
\begin{eqnarray*}
&&\big\langle W\bu^{k+1}-\by^{k+1} , \bz(\bu^{k+1},\by^{k+1}) \big\rangle\\
&=&\big\langle W\bu^{k+1},  \bz(\bu^{k+1},\by^{k+1}) \big\rangle-\big\langle \by^{k+1}, \bz(\bu^{k+1},\by^{k+1}) \big\rangle\\
&=&\big\langle W\bu^{k+1},  \bz(\bu^{k+1},\by^{k+1}) \big\rangle
=\big\langle (W\bu^{k+1})_{\cc^{k+1}},  (\bz(\bu^{k+1},\by^{k+1}))_{\cc^{k+1}}\big\rangle\\
&=&\big\langle (W\bu^{k+1})_{\cc^{k+1}},  
((W^\top)_{\cc^{k+1}})^\dag(-\nabla f(\bu^{k+1})-\beta W^\top W\bu^{k+1})\big\rangle
\end{eqnarray*}
Therefore, for any $\by\in\R^m$, 
\begin{eqnarray*}
&&\lambda \|\by\|_0+\big\langle W\bu^{k+1}-\by , \bz(\bu^{k+1},\by) \big\rangle +\frac{\rho}{2}\|W\bu^{k+1}-\by\|^2\\
&=& \sum_{i\in\ci}\big(\lambda+\frac{\rho}{2}(y_i-(W\bu^{k+1})_i)^2\big) \\
&&+\sum_{i\in\cc}(W\bu^{k+1})_i[((W^\top)_i)^\dag(-\nabla f(\bu^{k+1})-\beta W^\top W\bu^{k+1})]+\sum_{i\in\cc}\frac\rho2(W\bu^{k+1})_i^2,
\end{eqnarray*}
where $\mathcal{C}=\{i | y_i=0, i\in [m]\}$ and $\mathcal{I}=\{i | y_i\neq0, i\in [m]\}$. Obviously, the objective function of \eqref{findsolutiony} attains its minimum if and only if $y_i=(W\bu^{k+1})_i$ for $i\in\ci$. Specifically, when $|(W\bu^{k+1})_i|\ge (M+\sqrt{M^2+2\rho (m+1) \lambda})/{\rho}$, $y_i$ must equal to $(W\bu^{k+1})_i$.   This thus proves that $\by^{k+1}$ is a globally optimal solution to subproblem \eqref{cquesy} at $(k+1)$-th step.
\end{proof}

\begin{remark}
In Theorem \ref{solutionguy}, for all sufficiently large $k$, we assume that no component of sequence $\{|W\bu^k|\}$ lie within the small interval $((-M+\sqrt{M^2+\rho \lambda/m})/\rho, (M$ $+\sqrt{M^2+2\rho (m+1) \lambda})/\rho)$. This may appear unusual, but in practice, it is not difficult to satisfy because when $\rho$ is sufficiently large,
\begin{eqnarray*}
&&\frac{M+\sqrt{M^2+2\rho (m+1) \lambda}}{\rho}-\frac{-M+\sqrt{M^2+\rho \lambda/m}}{\rho}\\
&=&\frac{2M}{\rho}+\frac{\sqrt{M^2+2\rho (m+1) \lambda}-\sqrt{M^2+\rho \lambda/m}}{\rho}\\
&=&\frac{2M}{\rho}+\frac{2(m+1)\lambda-\lambda/m}{\sqrt{M^2+2\rho (m+1) \lambda}+\sqrt{M^2+\rho \lambda/m}},
\end{eqnarray*}
the length of above interval tends to zero. This indicates that as the regularization parameter $\rho$ increases, the length of the small interval continuously decreases, making the assumption that $|(W\bu^k)_i|$ does not lie within this small interval a mild condition.
\end{remark}

\subsection{Convergence analysis of Algorithm \ref{algor:rlmem}}

In this subsection, we demonstrate the linear convergence results of the proposed augmented Lagrangian method with exact multipliers for row-full-rank $W$ under mild conditions.

In the following lemma, we show that the constraint violation at the current iterate can be bounded by the norm of difference between the updated value and the current iterate, which facilitates the subsequent proof of monotonic decrease in the function value.

\begin{lemma}\label{lem:3.4}
Let $\{(\bu^k,\by^k)\}$ be the sequence generated by Algorithm \ref{algor:rlmem}, then
\[
\left\|\bu^{k+1}-\bu^k\right\|\ge\frac{\rho}{\left\|A^{\top} A+(\beta +\rho)W^{\top} W+t_kI\right\|\left\|W^{\dagger}\right\|^2} \left\|W^{\dagger}(W\bu^k-\by^k)\right\|.
\]
\end{lemma}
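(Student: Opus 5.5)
The plan is to work directly with the optimality condition of subproblem \eqref{rquesu}, which is given in closed form by Theorem \ref{thesolu}. Write $B_k:=A^{\top}A+(\beta+\rho)W^{\top}W+t_kI$ and $\mathbf{g}^k:=-\nabla f(\bu^k)-\beta W^{\top}W\bu^k$, with $\nabla f(\bu)=A^{\top}(A\bu-\bu_0)$. By \eqref{solutionru},
$$B_k\bu^{k+1}=A^{\top}\bu_0-W^{\top}\bz^k+\rho W^{\top}\by^k+t_k\bu^k.$$
Subtracting $B_k\bu^k$ from both sides should give the identity
$$B_k(\bu^{k+1}-\bu^k)=\mathbf{g}^k-W^{\top}\bz^k-\rho W^{\top}(W\bu^k-\by^k),$$
which I will check by routine expansion.

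The key step is to exploit the exact multiplier $\bz^k=\bz(\bu^k)=(W^{\top})^{\dagger}\mathbf{g}^k$ from \eqref{equ:zk}. Set $P:=W^{\top}(W^{\top})^{\dagger}$, the orthogonal projector onto $\mathrm{range}(W^{\top})$. Then $W^{\top}\bz^k=P\mathbf{g}^k$, so the right-hand side becomes $(I-P)\mathbf{g}^k-\rho W^{\top}(W\bu^k-\by^k)$.

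I do not expect $\mathbf{g}^k$ to lie in $\mathrm{range}(W^{\top})$. The residual $(I-P)\mathbf{g}^k$ is therefore the main obstacle, and I plan to remove it rather than estimate it. To do so, apply $(W^{\top})^{\dagger}=(W^{\dagger})^{\top}$ to both sides. Two Moore--Penrose facts are needed:
\begin{itemize}
\item $(W^{\top})^{\dagger}(I-P)=0$, by the identity $X^{\dagger}XX^{\dagger}=X^{\dagger}$.
\item $(W^{\top})^{\dagger}W^{\top}=I_m$, because $W$ has full row rank, so $W^{\top}$ has full column rank.
\end{itemize}
Together these give
$$(W^{\dagger})^{\top}B_k(\bu^{k+1}-\bu^k)=-\rho\,(W\bu^k-\by^k).$$

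Taking norms then yields
$$\rho\|W\bu^k-\by^k\|\le\|W^{\dagger}\|\,\|B_k\|\,\|\bu^{k+1}-\bu^k\|,$$
using $\|(W^{\dagger})^{\top}\|=\|W^{\dagger}\|$. Finally, $\|W^{\dagger}(W\bu^k-\by^k)\|\le\|W^{\dagger}\|\,\|W\bu^k-\by^k\|$. Combining the two inequalities produces the factor $\|W^{\dagger}\|^2$ and gives exactly the claimed bound.

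Apart from the pseudoinverse identities used to kill $(I-P)\mathbf{g}^k$, every step is a direct computation.
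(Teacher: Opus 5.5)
Your proposal is correct and follows essentially the same route as the paper. Both arguments use the same identity $B_k(\bu^{k+1}-\bu^k)=(I-W^{\top}(W^{\top})^{\dagger})\mathbf{g}^k-\rho W^{\top}(W\bu^k-\by^k)$, remove the first term by applying $(W^{\top})^{\dagger}$, and obtain the squared factor from $\|W^{\dagger}x\|\le\|W^{\dagger}\|\,\|x\|$. The only difference is cosmetic: you invoke $(W^{\top})^{\dagger}W^{\top}=I_m$ directly, while the paper reaches the same inequality through an SVD of $W$.
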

\begin{proof}
Let $W=U\Sigma_mV_m^{\top}$ be the singular value decomposition of $W$, where $U,\Sigma_m\in \Rmm$ and $V_m\in\mathbb{R}^{n\times m}$.  Then
\begin{eqnarray}
\left\|W^{\dagger}(W\bu^k-\by^k)\right\|&=& \left\|V_m\Sigma_m^{-1}U^{\top}(W\bu^k-\by^k)\right\| =\left\|\Sigma_m^{-1}U^{\top}(W\bu^k-\by^k)\right\|\nonumber\\
&\le& \left\|\Sigma_m^{-1}\right\|_2\left\|U^{\top}(W\bu^k-\by^k)\right\|\nonumber\\
&=&\left\|W^{\dagger}\right\|_2\left\|UU^{\top}(W\bu^k-\by^k)\right\|\nonumber\\
&=&\left\|W^{\dagger}\right\|_2\left\|(W^{\top})^{\dagger}W^{\top}(W\bu^k-\by^k)\right\|\label{wwandw}.
\end{eqnarray}
By \eqref{solutionru} in Theorem \ref{thesolu} and $ \bz^k=(W^{\top})^{\dagger}(-\nabla f(\bu^{k})-\beta W^{\top}W\bu^{k} )=(W^{\top})^{\dagger}(-A^{\top}(A\bu^k-\bu_0)-\beta W^{\top}W\bu^k )$, we have
\begin{eqnarray*}
&& \left\|\bu^{k+1}-\bu^k\right\|\\
&=&\left\|(A^{\top} A+(\beta+\rho) W^{\top}W+t_k I)^{-1}(A^{\top} \bu_0-W^{\top} \bz^k+\rho W^{\top} \by^k+t_k\bu^k)-\bu^k\right\| \\
&=&\left\|(A^{\top} A+(\beta+\rho) W^{\top}W+t_k I)^{-1}(A^{\top} \bu_0-W^{\top} \bz^k+\rho W^{\top} \by^k+t_k\bu^k-A^{\top} A \bu^k\right.\\
&&\left.-(\beta+\rho) W^{\top} W\bu^k-t_k\bu^k)\right\| \\
& =&\left\|(A^{\top} A+(\beta+\rho) W^{\top}W+t_k I)^{-1}(A^{\top} \bu_0-W^{\top} \bz^k+\rho W^{\top} \by^k-A^{\top} A \bu^k\right.\\
&&-\left.(\beta+\rho) W^{\top} W\bu^k)\right\|\\
& \overset{(*)}{\geq}& \frac{\left\|A^{\top} \bu_0-W^{\top} \bz^k+\rho W^{\top} \by^k-A^{\top} A \bu^k-(\beta+\rho) W^{\top} W\bu^k\right\|}{\left\|A^{\top} A+(\beta +\rho)W^{\top} W+t_kI\right\|_2} \\
&=&\frac{\left\|-W^{\top}\bz^k+(-A^{\top} (A \bu^k-\bu_0)-\beta W^{\top} W\bu^k)-\rho W^{\top}( W\bu^k-\by^k)\right\|}{\left.\|A^{\top} A+(\beta +\rho)W^{\top} W+t_kI\right\|_2} \\
&=&\frac{\|(I-W^{\top}(W^{\top})^{\dagger})\big(-\nabla f(\bu^{k})-\beta W^{\top}W\bu^k\big)-\rho W^{\top}(W\bu^k-\by^k)\|}{\left\|A^{\top} A+(\beta +\rho)W^{\top} W+t_kI\right\|_2}\\
&\overset{(**)}{\ge}&\frac{ \left\|(W^{\top})^{\dagger}\big((I-W^{\top}(W^{\top})^{\dagger}) \big(-\nabla f(\bu^{k})-\beta W^{\top}W\bu^k\big)-\rho W^{\top}(W\bu^k-\by^k)\big)\right\|}{\left\|A^{\top} A+(\beta +\rho)W^{\top} W+t_kI\right\|_2\left\|(W^{\top})^{\dagger}\right\|_2} \\
&=&\frac{\rho}{\left\|A^{\top} A+(\beta +\rho)W^{\top} W+t_kI\right\|_2\left\|(W^{\top})^{\dagger}\right\|_2}
\left\|(W^{\top})^{\dagger}W^{\top}(W\bu^k-\by^k)\right\| \\
&\ge& \frac{\rho}{\left\|A^{\top} A+(\beta +\rho)W^{\top} W+t_kI\right\|_2\left\|W^{\dagger}\right\|_2^2} \left\|W^{\dagger}(W\bu^k-\by^k)\right\|,
\end{eqnarray*}
where the inequality $(*)$ follows by the fact that for any $\bx$, $\by$ satisfying $\bx = D^{-1}\by$, we have $\|\bx\| \ge \frac{1}{\|D\|_2} \|\by\|$, the inequality $(**)$ follows by $\|(W^\top)^\dag\bx\|\le\|(W^\top)^\dag\|_2\|\bx\|$, the last inequality is follows from \eqref{wwandw} and $\|(W^{\top})^{\dagger}\|_2=\|W^{\dagger}\|_2$.
\end{proof}

The following theorem characterizes the relationship between the norm of the difference between two consecutive iterates and the corresponding difference in function values.

\begin{lemma}\label{hfunbound}
Let $\{(\bu^k,\by^k)\}$ be the sequence generated by Algorithm \ref{algor:rlmem}. Let $\rho\ge \max\{t,1\}$ and $t=\max_{k}\{t_k\}$,
\BE\label{equ:tk}
\begin{aligned}
t_k \ge& \max\{ 4( \left\|W^{\top}(W^{\top})^{\dagger}(A^{\top}A+\beta W^{\top}W)\right\|+ \left\|W^{\top}W\right\| \left\|W^{\dagger}\right\|^2 \left\|A^{\top}A+\beta W^{\top}W\right\|\\
&+\Big(\left\|A^{\top}A+\beta W^{\top}W\right\| + 1\Big) \left\|W^{\dagger}\right\|^2 \left\|A^{\top}A+\beta W^{\top}W\right\| ),~c\},
\end{aligned}
\EE
where $c>0$, then
\[
h(\bu^k,\by^k)-h(\bu^{k+1},\by^{k+1})\ge \frac{c}{4}\left(\left\|\bu^{k+1}-\bu^k\right\|^2 +\left\|\by^{k+1}-\by^k\right\|^2\right).
\]
\end{lemma}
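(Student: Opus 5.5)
I would split the decrease $h(\bu^k,\by^k)-h(\bu^{k+1},\by^{k+1})$ into the $\bu$-step part $h(\bu^k,\by^k)-h(\bu^{k+1},\by^k)$ and the $\by$-step part $h(\bu^{k+1},\by^k)-h(\bu^{k+1},\by^{k+1})$, and bound each from below.

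\emph{The $\by$-step.} This is the easy part. Since $\bz^{k+1}=\bz(\bu^{k+1})$ does not depend on $\by$, the objective in \eqref{rquesy} is exactly $h(\bu^{k+1},\by)+\frac{t_k}{2}\|\by-\by^k\|^2$ minus terms independent of $\by$. Comparing its value at $\by^{k+1}$ (the global minimizer, Theorem \ref{thesolu}) with its value at $\by^k$ gives
\[
h(\bu^{k+1},\by^k)-h(\bu^{k+1},\by^{k+1})\ge \frac{t_k}{2}\|\by^{k+1}-\by^k\|^2\ge \frac{c}{2}\|\by^{k+1}-\by^k\|^2 .
\]

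\emph{The $\bu$-step.} Write $Q=A^\top A+\beta W^\top W$, so that $\nabla f(\bu)+\beta W^\top W\bu$ is affine with Hessian $Q$ and $\bz(\bu)$ is affine in $\bu$ with $\bz(\bu^{k+1})-\bz(\bu^k)=-(W^\top)^\dagger Q(\bu^{k+1}-\bu^k)$. Let $\Phi_k(\bu)$ denote the objective of \eqref{rquesu}. It is strongly convex with modulus at least $t_k$, and $\bu^{k+1}$ minimizes it, so $\Phi_k(\bu^k)-\Phi_k(\bu^{k+1})\ge \frac{t_k}{2}\|\bu^{k+1}-\bu^k\|^2$. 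Moreover $h(\bu,\by^k)=\Phi_k(\bu)-\frac{t_k}{2}\|\bu-\bu^k\|^2+\langle W\bu-\by^k,\bz(\bu)-\bz^k\rangle$, which yields
\[
h(\bu^k,\by^k)-h(\bu^{k+1},\by^k)\ge t_k\|\bu^{k+1}-\bu^k\|^2-\langle W\bu^{k+1}-\by^k,\bz(\bu^{k+1})-\bz^k\rangle .
\]
Here I use $\Phi_k(\bu^k)-\Phi_k(\bu^{k+1})\ge \frac{t_k}{2}\|\cdot\|^2$ and then add back the $\frac{t_k}{2}\|\cdot\|^2$ term.

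\emph{The cross term (main obstacle).} I would split $W\bu^{k+1}-\by^k=W(\bu^{k+1}-\bu^k)+(W\bu^k-\by^k)$.
\begin{itemize}
\item The first piece gives $\langle W\delta,(W^\top)^\dagger Q\delta\rangle$ with $\delta=\bu^{k+1}-\bu^k$. This is bounded by $\|W^\top(W^\top)^\dagger Q\|\|\delta\|^2$, which is the first term in \eqref{equ:tk}.
\item For the second piece, I would rewrite $W\bu^k-\by^k=WW^\dagger(W\bu^k-\by^k)$, valid for full row rank $W$. It then becomes $\langle W^\dagger(W\bu^k-\by^k),W^\top(W^\top)^\dagger Q\delta\rangle$ or similar. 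Lemma \ref{lem:3.4} gives $\|W^\dagger(W\bu^k-\by^k)\|\le \frac{\|Q+\rho W^\top W+t_kI\|\|W^\dagger\|^2}{\rho}\|\delta\|$.
\item Using $\rho\ge\max\{t,1\}$, I bound $\|Q+\rho W^\top W+t_kI\|/\rho\le \|Q\|/\rho+\|W^\top W\|+t_k/\rho\le \|Q\|+\|W^\top W\|+1$. This produces coefficients of the form $\|W^\top W\|\|W^\dagger\|^2\|Q\|$ and $(\|Q\|+1)\|W^\dagger\|^2\|Q\|$, matching the remaining terms in \eqref{equ:tk}.
\end{itemize}
Call the bracket in \eqref{equ:tk} $K$. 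The cross term is then at most $K\|\delta\|^2\le \frac{t_k}{4}\|\delta\|^2$, so the $\bu$-step decrease is at least $\frac{3t_k}{4}\|\delta\|^2\ge \frac{c}{4}\|\delta\|^2$.

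\emph{Conclusion and likely difficulty.} Adding the $\bu$-step and $\by$-step bounds gives the claim with constant $\frac{c}{4}$. The delicate point is matching the exact constants in \eqref{equ:tk}, in particular whether the $\|W^\top W\|$ term should carry $\|W^\top(W^\top)^\dagger\|=1$ or an extra factor. I expect some slack from the factor $4$ to absorb any such discrepancy.
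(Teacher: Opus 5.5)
Your proposal is correct and follows essentially the paper's argument. It uses the same split into the $\bu$-step and $\by$-step decreases, the same decomposition $W\bu^{k+1}-\by^k=W(\bu^{k+1}-\bu^k)+(W\bu^k-\by^k)$ of the cross term, and the same use of Lemma~\ref{lem:3.4} with $\|A^\top A+\beta W^\top W+t_kI\|/\rho\le\|A^\top A+\beta W^\top W\|+1$. The one difference is that you invoke strong convexity of the $\bu$-subproblem to obtain $t_k\|\bu^{k+1}-\bu^k\|^2$ instead of the paper's $\frac{t_k}{2}\|\bu^{k+1}-\bu^k\|^2$, which only adds slack. Your worry about the constants in \eqref{equ:tk} is unfounded: $((W^\top)^\dagger)^\top=W^\dagger$ gives the second piece directly with no extra factor, so \eqref{equ:tk} is matched exactly.
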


\begin{proof}
By \eqref{rquesu} and \eqref{rquesy}, we have
\begin{equation}\label{inequru}
\begin{aligned}
& f(\bu^k) + \frac{\beta}{2}\left\|W\bu^k\right\|^2 + \left\langle W\bu^k - \by^k, \bz^k \right\rangle + \frac{\rho}{2}\left\|W\bu^k - \by^k\right\|^2 
\ge  f(\bu^{k+1}) + \frac{\beta}{2}\left\|W\bu^{k+1}\right\|^2 \\
&+ \left\langle W\bu^{k+1} - \by^k, \bz^{k} \right\rangle + \frac{\rho}{2}\left\|W\bu^{k+1} - \by^k\right\|^2 + \frac{t_k}{2}\left\|\bu^{k+1} - \bu^k\right\|^2
\end{aligned}
\end{equation}
and
\begin{equation}\label{inequry}
\begin{aligned}
& \lambda \left\|\by^k\right\|_0 + \left\langle W\bu^{k+1} - \by^k , \bz^{k+1} \right\rangle + \frac{\rho}{2}\left\|W\bu^{k+1} - \by^k\right\|^2 \\
&\ge  \lambda \left\|\by^{k+1}\right\|_0 + \left\langle W\bu^{k+1} - \by^{k+1} , \bz^{k+1} \right\rangle + \frac{t_k}{2}\left\|\by^{k+1} - \by^k\right\|^2 + \frac{\rho}{2}\left\|W\bu^{k+1} - \by^{k+1}\right\|^2.
\end{aligned}
\end{equation}
By leveraging \eqref{inequru} and \eqref{inequry}, we obtain
\begin{eqnarray*}
&& h(\bu^k,\by^k) - h(\bu^{k+1},\by^{k+1}) \\
&=& h(\bu^k,\by^k) - h(\bu^{k+1},\by^k) + h(\bu^{k+1},\by^k) - h(\bu^{k+1},\by^{k+1}) \\
&\ge& \frac{t_k}{2}\left\|\bu^{k+1} - \bu^k\right\|^2 + \left\langle W\bu^{k+1} - \by^k, \bz^{k} \right\rangle - \left\langle W\bu^{k+1} - \by^k, \bz^{k+1} \right\rangle + \frac{t_k}{2}\left\|\by^{k+1} - \by^k\right\|^2 \\
  &=& \frac{t_k}{2}( \left\|\bu^{k+1} - \bu^k\right\|^2 + \left\|\by^{k+1} - \by^k\right\|^2) \\
  &&+ \left\langle W\bu^{k+1} - \by^k, (W^{\top})^{\dagger}(-A^{\top}A - \beta W^{\top}W)(\bu^k - \bu^{k+1}) \right\rangle \\
&=& \frac{t_k}{2}( \left\|\bu^{k+1} - \bu^k\right\|^2 + \left\|\by^{k+1} - \by^k\right\|^2 )\\
&& - \left\langle (W)^{\dagger}(W\bu^{k+1} - \by^k), (A^{\top}A + \beta W^{\top}W)(\bu^k - \bu^{k+1}) \right\rangle \\
&=&\frac{t_k}{2} (\|\bu^{k+1} - \bu^k\|^2 + \|\by^{k+1} - \by^k\|^2 )\\
&&  - \left\langle (W)^{\dagger}W(\bu^{k+1} - \bu^k), (A^{\top}A + \beta W^{\top}W)(\bu^k - \bu^{k+1}) \right\rangle \\
&& - \left\langle (W)^{\dagger}(W\bu^k - \by^k), (A^{\top}A + \beta W^{\top}W)(\bu^k - \bu^{k+1}) \right\rangle \\
&\ge& \frac{t_k}{2}( \left\|\bu^{k+1} - \bu^k\right\|^2 + \left\|\by^{k+1} - \by^k\right\|^2 ) \\
&&- \|W^{\top}(W^{\top})^{\dagger}(A^{\top}A + \beta W^{\top}W)\|_2\|\bu^k - \bu^{k+1}\|^2 \\
&& - \|(W)^{\dagger}(W\bu^k - \by^k)\| \|A^{\top}A + \beta W^{\top}W\|_2 \|\bu^k - \bu^{k+1}\| \\
&\overset{(*)}{\ge}& \frac{t_k}{2}( \left\|\bu^{k+1} - \bu^k\right\|^2 + \left\|\by^{k+1} - \by^k\right\|^2 )\\
&&- \left\|W^{\top}(W^{\top})^{\dagger}(A^{\top}A + \beta W^{\top}W)\right\|_2 \left\|\bu^k - \bu^{k+1}\right\|^2 \\
&& - \frac{\left\|A^{\top} A + (\beta + \rho)W^{\top} W + t_k I\right\|_2  \left\|W^{\dagger}\right\|_2^2}{\rho} \left\|A^{\top}A + \beta W^{\top}W\right\|_2 \left\|\bu^k - \bu^{k+1}\right\|^2 \\
&\ge& \frac{t_k}{2}( \left\|\bu^{k+1} - \bu^k\right\|^2 + \left\|\by^{k+1} - \by^k\right\|^2 ) \\
&&- \left\|W^{\top}(W^{\top})^{\dagger}(A^{\top}A + \beta W^{\top}W)\right\|_2 \left\|\bu^k - \bu^{k+1}\right\|^2 \\
&& - \frac{\left\|A^{\top} A + \beta W^{\top} W + t_k I\right\|_2  \left\|W^{\dagger}\right\|_2^2}{\rho} \left\|A^{\top}A + \beta W^{\top}W\right\|_2 \left\|\bu^k - \bu^{k+1}\right\|^2 \\
&& - \left\|W^{\top} W\right\|_2 \left\|W^{\dagger}\right\|_2^2 \left\|A^{\top}A + \beta W^{\top}W\right\|_2 \left\|\bu^k - \bu^{k+1}\right\|^2 \\
&\overset{(**)}{\ge}& \frac{t_k}{4}\left\|\bu^{k+1} - \bu^k\right\|^2 + \frac{t_k}{2}\left\|\by^{k+1} - \by^k\right\|^2 \\
&\ge& \frac{t_k}{4}( \left\|\bu^{k+1} - \bu^k\right\|^2 + \left\|\by^{k+1} - \by^k\right\|^2 ) \\
&\ge& \frac{c}{4}( \left\|\bu^{k+1} - \bu^k\right\|^2 + \left\|\by^{k+1} - \by^k\right\|^2 ),
   \end{eqnarray*}
where the inequality $(*)$ follows by Lemma \ref{lem:3.4}, the inequality $(**)$ follows from \eqref{equ:tk}.
\end{proof}

Regarding the subdifferential of $h(\bu,\by)$, we readily have the following result.

\begin{lemma}\label{Fder}
Let $h$ be defined in \eqref{hfullrank}. Then for all $(\bu,\by)\in \R^n\times\R^m$ we have
\[
\partial h(\bu,\by)=\left(\nabla_{\bu}h(\bu,\by), \nabla_{\by}(\langle W\bu-\by , \bz(\bu)\rangle+\frac{\rho}{2}\|W\bu-\by\|^2)+\partial(\lambda\|\by\|_0) \right).
\]
\end{lemma}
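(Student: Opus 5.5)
The plan is to split $h$ as $h(\bu,\by)=g(\bu,\by)+\lambda\|\by\|_0$, where
\[
g(\bu,\by)=f(\bu)+\langle W\bu-\by,\bz(\bu)\rangle+\frac{\beta}{2}\|W\bu\|^2+\frac{\rho}{2}\|W\bu-\by\|^2 .
\]
Since $f(\bu)=\frac12\|A\bu-\bu_0\|^2$, the map $\bz(\bu)=(W^{\top})^{\dagger}(-A^{\top}(A\bu-\bu_0)-\beta W^{\top}W\bu)$ given by \eqref{equ:zk} is affine in $\bu$. Hence $g$ is a quadratic polynomial in $(\bu,\by)$, in particular $C^1$ (indeed $C^\infty$) on $\R^n\times\R^m$. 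We then invoke the standard sum rule for the limiting (Mordukhovich) subdifferential: if $g$ is continuously differentiable and $\psi$ is proper lower semicontinuous, then $\partial(g+\psi)(x)=\nabla g(x)+\partial\psi(x)$.

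Next we apply the sum rule with $\psi(\bu,\by)=\lambda\|\by\|_0$. The function $\|\cdot\|_0$ is lower semicontinuous, since the set $\{\by:\|\by\|_0\le k\}$ is closed for every $k$. Because $\psi$ does not depend on $\bu$, its subdifferential splits as $\partial\psi(\bu,\by)=\{{\bf 0}\}\times\partial(\lambda\|\by\|_0)$. This uses the product rule for separable functions, where the $\bu$-part is the zero function with subdifferential $\{{\bf 0}\}$. Combining the two rules gives
\[
\partial h(\bu,\by)=\big(\nabla_{\bu}g(\bu,\by),\ \nabla_{\by}g(\bu,\by)+\partial(\lambda\|\by\|_0)\big).
\]

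It remains to match this with the notation of the statement. First, $\nabla_{\bu}g=\nabla_{\bu}h$: the term $\lambda\|\by\|_0$ does not involve $\bu$, so $\nabla_{\bu}h$ is well defined and equals $\nabla_{\bu}g$. Second, $\nabla_{\by}g=\nabla_{\by}\big(\langle W\bu-\by,\bz(\bu)\rangle+\frac{\rho}{2}\|W\bu-\by\|^2\big)$, because $f$ and $\frac{\beta}{2}\|W\bu\|^2$ do not depend on $\by$. For completeness we can write the gradient explicitly: $\nabla_{\by}g=-\bz(\bu)-\rho(W\bu-\by)$.

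The main step that needs care is justifying the splitting of $\partial\psi$ into $\{{\bf 0}\}\times\partial(\lambda\|\by\|_0)$ for the limiting subdifferential. The cleanest route is to check it first for the Fréchet subdifferential, where it follows directly from the definition since $\psi$ is constant in $\bu$. We then pass to limits, using that the convergence $\psi(\bu^j,\by^j)\to\psi(\bu,\by)$ involves only the $\by$-component. Everything else in the argument is routine differentiation.
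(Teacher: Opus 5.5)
Your proposal is correct. It takes the route the paper implicitly relies on: the paper gives no proof and simply says one ``readily'' has this formula. Your argument splits off the smooth part, which is smooth precisely because $\bz(\bu)$ is affine for the quadratic $f$ fixed in Section 3.1. It then applies the smooth-plus-lsc sum rule and verifies $\partial\psi(\bu,\by)=\{{\bf 0}\}\times\partial(\lambda\|\by\|_0)$ through the regular subdifferential and a limit passage, which fills in exactly the detail the paper leaves unstated.
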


Next, we give some preliminary results on subgradients of nonsmooth functions and Kurdyka-{\L}ojasiewicz (KL) property. We first recall some subdifferentials (subgradients) for nonsmooth functions in \cite{M18,M06,RW98}.

\begin{definition}\label{defi:A2}
	Consider a function $f: \mathbb{R}^p \rightarrow [-\infty, \infty]$ and a point $\bx \in \operatorname{dom} f$, the regular subdifferential of $f$ at $\bx$ is defined as
	\[
	\widehat{\partial} f(\bx):=\left\{\bv \in \mathbb{R}^p: \liminf _{\bx^{\prime} \rightarrow \bx, \bx^{\prime} \neq \bx} \frac{f\left(\bx^{\prime}\right)-f(\bx)-\left\langle \bv, \bx^{\prime}-\bx\right\rangle}{\left\|\bx^{\prime}-\bx\right\|} \geq 0\right\} ;
	\]
	the subdifferential of the function $f$ at $\bx$ is defined as
	\[
	\partial f(\bx):=\left\{\bv \in \mathbb{R}^p: \exists \bx^k {\underset{f}{\rightarrow}} \bx \text { and } \bv^k \in \widehat{\partial} f\left(\bx^k\right) \text { with } \bv^k \rightarrow \bv \text { as } k \rightarrow \infty\right\};
	\]
	and the horizon subdifferential of the function $f$ at $x$ is defined as
	\[
	\partial^{\infty} f(\bx):=\left\{\bv \in \mathbb{R}^p: \exists \bx^k {\underset{f}{\rightarrow}} \bx, \lambda^k \downarrow 0 \text { and } \bv^k \in \widehat{\partial} f\left(\bx^k\right) \text { s.t. } \lambda^k \bv^k \rightarrow v \text { as } k \rightarrow \infty\right\}.
	\]
\end{definition}

We now recall the KL property for a nonsmooth function \cite{AB10,BS14}.
\begin{definition}\label{app:kli} (KL property)
	Let $h:\Rn\to [-\infty, +\infty]$ be a proper lsc function. Then $h$ is said to have the KL property at $\bar{\ba}\in\dom~\partial h:=\{\ba\in\Rn\; |\; $ $\partial h(\ba)\neq\varnothing\}$ if there exist $\mu\in (0,+\infty]$, a neighborhood  $\cb$ of $\bar{\ba}$ and a function $\xi:[0,\mu)\to \R_+$ such that $\xi$ is concave and continuously differentiable on $(0,\mu)$ and continuous at $0$ with $\xi(0)=0$ and $\xi'(x)>0$ for all
	$x\in(0,\mu)$, and
	the  KL inequality
	\[
	\xi'\big(h(\ba)-h(\bar{\ba})\big) \dist({\bf 0}, \partial h({\ba}))\geq 1
	\]
	holds for all $\ba\in \cb\cap\{\ba\in\Rn\; | \; h(\bar{\ba})<h(\ba)<h(\bar{\ba})+\mu\}$, where $\dist({\bf 0}, \partial h(\bar{\ba})):=\inf\{\|\br\|\; | \; \br\in\partial h(\bar{\ba})\}$.
\end{definition}

If $\xi$ can be chosen as $\xi(s)=c \sqrt{s}$ for some $c>0$, then $h$ is said to have the KL property at $\bar{\ba}$ with an exponent of $1 / 2$. If $h$ has the KL property of exponent $1 / 2$ at each point of dom $\partial h$, then $h$ is called a KL function of exponent $1 / 2$.

The following lemma gives a subgradient lower bound for the successive iterate gap. We refer to Definition \ref{defi:A2} and Lemma \ref{Fder} for the subgradient of $h$.

\begin{lemma}\label{hgribound}
Let $\{(\bu^k,\by^k)\}$ be the sequence generated by Algorithm \ref{algor:rlmem}. For any $k\ge0$, define
\begin{eqnarray*}
\ba^{k+1}&=&\nabla f(\bu^{k+1})+W^{\top}\bz^{k+1}-(A^{\top}A+\beta W^{\top}W)W^{\dagger}(W\bu^{k+1}-\by^{k+1})\\
&&+\beta W^{\top}W\bu^{k+1}+\rho W^{\top}(W\bu^{k+1}-\by^{k+1})
\end{eqnarray*}
and $\bb^{k+1}=-t_k(\by^{k+1}-\by^k)$. Then, $(\ba^{k+1},\bb^{k+1})\in\partial h(\bu^{k+1},\by^{k+1})$ and
\[
\|(\ba^{k+1},\bb^{k+1})\|\le \sqrt{\gamma}\|(\bu^{k+1}-\bu^k,\by^{k+1}-\by^k)\|,
\]
where
\begin{align}
\gamma = &\max\left\{ 
2\left[ 
\left\| W^{\top}(W^{\top})^{\dagger}(-A^{\top}A - \beta W^{\top}W) - (A^{\top}A + \beta W^{\top}W)W^{\dagger}W - t I \right\| 
\right. \right. \nonumber \\
&\left. \left. 
+ \left\| A^{\top}A + \beta W^{\top}W \right\|
\left( 
\left\| A^{\top} A + (\beta + \rho)W^{\top} W + t I \right\| \left\| W^{\dagger} \right\|^2
\right)/\rho 
\right]^2, 
\right. \nonumber \\
&\left. 
2 \left\| (A^{\top}A + \beta W^{\top}W)W^{\dagger} - \rho W^{\top} \right\|^2 + t^2 
\right\} \nonumber
\end{align}
%
%
with $t=\max\{t_k\}$ for any $k$.
\end{lemma}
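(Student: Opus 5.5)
The argument has two parts. First, I will compute $\partial h(\bu^{k+1},\by^{k+1})$ using Lemma \ref{Fder}, plug in the optimality conditions of the two subproblems \eqref{rquesu} and \eqref{rquesy}, and check that the resulting vector equals $(\ba^{k+1},\bb^{k+1})$. Second, I will bound each component by the successive differences, using Lemma \ref{lem:3.4} to control the constraint-violation term.

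\emph{Membership in $\partial h$.} Since $\bz(\bu)=(W^\top)^\dagger(-A^\top(A\bu-\bu_0)-\beta W^\top W\bu)$ is affine in $\bu$ with Jacobian $-(W^\top)^\dagger(A^\top A+\beta W^\top W)$, the chain rule gives
\[
\nabla_\bu h=\nabla f(\bu)+W^\top\bz(\bu)-(A^\top A+\beta W^\top W)W^\dagger(W\bu-\by)+\beta W^\top W\bu+\rho W^\top(W\bu-\by).
\]
Here I use $((W^\top)^\dagger)^\top=W^\dagger$. Evaluating this at $(\bu^{k+1},\by^{k+1})$ with $\bz^{k+1}=\bz(\bu^{k+1})$ gives exactly $\ba^{k+1}$.

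For the $\by$-block, the first-order optimality of \eqref{rquesy} gives
\[
{\bf 0}\in\partial(\lambda\|\by^{k+1}\|_0)-\bz^{k+1}+t_k(\by^{k+1}-\by^k)-\rho(W\bu^{k+1}-\by^{k+1}).
\]
By Lemma \ref{Fder}, the $\by$-part of $\partial h$ is $-\bz^{k+1}-\rho(W\bu^{k+1}-\by^{k+1})+\partial(\lambda\|\by^{k+1}\|_0)$. Comparing the two shows that it contains $-t_k(\by^{k+1}-\by^k)=\bb^{k+1}$.

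\emph{Bounding $\ba^{k+1}$.} I will subtract from $\ba^{k+1}$ the optimality condition of \eqref{rquesu}:
\[
\nabla f(\bu^{k+1})+W^\top\bz^k+\beta W^\top W\bu^{k+1}+t_k(\bu^{k+1}-\bu^k)+\rho W^\top(W\bu^{k+1}-\by^k)={\bf 0}.
\]
This gives
\[
\ba^{k+1}=W^\top(\bz^{k+1}-\bz^k)-t_k(\bu^{k+1}-\bu^k)-\rho W^\top(\by^{k+1}-\by^k)-(A^\top A+\beta W^\top W)W^\dagger(W\bu^{k+1}-\by^{k+1}).
\]
Next I substitute $\bz^{k+1}-\bz^k=-(W^\top)^\dagger(A^\top A+\beta W^\top W)(\bu^{k+1}-\bu^k)$ and split the last term as
\[
W\bu^{k+1}-\by^{k+1}=W(\bu^{k+1}-\bu^k)+(W\bu^k-\by^k)-(\by^{k+1}-\by^k).
\]
Collecting terms, the coefficient of $\bu^{k+1}-\bu^k$ is $W^\top(W^\top)^\dagger(-A^\top A-\beta W^\top W)-(A^\top A+\beta W^\top W)W^\dagger W-t_kI$. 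The coefficient of $\by^{k+1}-\by^k$ is $(A^\top A+\beta W^\top W)W^\dagger-\rho W^\top$. The remaining term is $-(A^\top A+\beta W^\top W)W^\dagger(W\bu^k-\by^k)$, and Lemma \ref{lem:3.4} bounds it by
\[
\|A^\top A+\beta W^\top W\|\,\|A^\top A+(\beta+\rho)W^\top W+t_kI\|\,\|W^\dagger\|^2\rho^{-1}\,\|\bu^{k+1}-\bu^k\|.
\]
I then replace $t_k$ by $t$ in the norms, which is valid by monotonicity since $t_k\le t$.

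The step I expect to be the main obstacle is this: the first matrix contains $t_kI$ rather than $tI$, so the norm bound needs care. I will handle it by accepting the $tI$ form as stated (or bounding it via the triangle inequality), which is at the level of rigor the paper uses.

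\emph{Combining.} With $\|\ba^{k+1}\|\le P\|\bu^{k+1}-\bu^k\|+Q\|\by^{k+1}-\by^k\|$, applying $(a+b)^2\le 2a^2+2b^2$ gives $\|\ba^{k+1}\|^2\le 2P^2\|\Delta\bu\|^2+2Q^2\|\Delta\by\|^2$. Also $\|\bb^{k+1}\|^2\le t^2\|\Delta\by\|^2$. Summing, the coefficient of $\|\Delta\bu\|^2$ is $2P^2$ and that of $\|\Delta\by\|^2$ is $2Q^2+t^2$. Both are at most $\gamma$, so taking square roots gives the stated bound.
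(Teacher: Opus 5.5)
Your proposal is correct and follows the paper's proof essentially line by line: $\ba^{k+1}$ is identified as $\nabla_{\bu}h$ through the affine map $\bz(\bu)$, Fermat's rule for \eqref{rquesy} gives $\bb^{k+1}$, and the bound comes from subtracting the optimality condition of \eqref{rquesu}, splitting $W\bu^{k+1}-\by^{k+1}$ three ways, applying Lemma \ref{lem:3.4}, and using $(a+b)^2\le 2a^2+2b^2$.

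The $t_kI$-to-$tI$ step you flagged is also left unjustified in the paper. Your triangle-inequality fallback would only give a weaker constant than the one stated. The step does hold, however. Write $P=W^{\dagger}W=W^{\top}(W^{\top})^{\dagger}$, which is an orthogonal projector, and $G=A^{\top}A+\beta W^{\top}W\succeq 0$. The matrix in question is then $-(PG+GP+t_kI)$. Take a unit eigenvector $(v_1,v_2)$ of $PG+GP$ for $\mu_{\min}$, in the splitting induced by $P$, and compare it with $(v_1,-v_2)$: the two Rayleigh quotients sum to $4v_1^{\top}G_{11}v_1\ge 0$. Hence $\mu_{\max}+\mu_{\min}\ge 0$, so $\|PG+GP+sI\|=\mu_{\max}+s$ is increasing in $s\ge 0$, which gives the stated constant with $tI$.
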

\begin{proof}
The function $h(\bu,\by)$ under consideration has the following form:
\begin{equation*}
h(\bu,\by)=f(\bu)+\big\langle W\bu-\by , \bz(\bu)\big\rangle+\lambda\|\by\|_0 +\frac{\beta}{2}\|W\bu\|^2+\frac{\rho}{2}\|W\bu-\by\|^2
\end{equation*}
with
\BE\nonumber
\bz(\bu)=(W^{\top})^{\dagger}(-\nabla f(\bu)-\beta W^{\top}W\bu )~\mbox{and}~f(\bu)=\frac{1}{2}\|A\bu-\bu_0\|^2.
\EE
Obviously, it can be easily verified that $\ba^{k+1}=\nabla_{\bu} h(\bu^{k+1},\by^{k+1})$.

By \eqref{rquesy}, there exists an element $\eta^{k+1}\in \lambda \|\by^{k+1}\|_0$ such that
\[
\eta^{k+1}-\bz^{k+1}+t_k(\by^{k+1}-\by^k)+\rho(\by^{k+1}-W\bu^{k+1})={\bf 0},
\]
then
\[
-t_k(\by^{k+1}-\by^k)=\eta^{k+1}-\bz^{k+1}+\rho(\by^{k+1}-W\bu^{k+1}),
\]
it is easy to verify that the right-hand side of the above expression is an element of $\partial_{\by} h(\bu^{k+1},\by^{k+1})$. Therefore, $\bb^{k+1}\in \partial_{\by} h(\bu^{k+1},\by^{k+1})$.
By \eqref{rquesu}, we have
\begin{equation}\label{derirquesu}
\nabla f(\bu^{k+1})+W^{\top}\bz^{k}+\beta W^{\top}W\bu^{k+1}+t_k(\bu^{k+1}-\bu^k)+\rho W^{\top}(W\bu^{k+1}-\by^k)={\bf 0}.
\end{equation}
Substituting \eqref{derirquesu} into the definition of $\ba^{k+1}$ yields
\begin{eqnarray*}
&&\|\ba^{k+1}\|\\
&=&\left\|\nabla f(\bu^{k+1})+W^{\top}(W^{\top})^{\dagger}(-A^{\top}(A\bu^{k+1}-\bu_0)-\beta W^{\top}W\bu^{k+1})\right.\\
&&-(A^{\top}A+\beta W^{\top}W)W^{\dagger}(W\bu^{k+1}-\by^{k+1})+\beta W^{\top}W\bu^{k+1}+\rho W^{\top}(W\bu^{k+1}-\by^{k+1})\\
&&-\nabla f(\bu^{k+1})-W^{\top}(W^{\top})^{\dagger}(-A^{\top}(A\bu^{k}-\bu_0)-\beta W^{\top}W\bu^{k})-\beta W^{\top}W\bu^{k+1}\\
&&\left.-t_k(\bu^{k+1}-\bu^k)-\rho W^{\top}(W\bu^{k+1}-\by^k)\right\|\\
&=&\left\|W^{\top}(W^{\top})^{\dagger}(-A^{\top}A-\beta W^{\top}W)(\bu^{k+1}-\bu^k)+\rho W^{\top}(\by^{k}-\by^{k+1})-t_k(\bu^{k+1}-\bu^k)\right.\\
&&\left.-(A^{\top}A+\beta W^{\top}W)W^{\dagger}(W\bu^{k+1}-\by^{k+1})\right\|\\
&=&\left\|W^{\top}(W^{\top})^{\dagger}(-A^{\top}A-\beta W^{\top}W)(\bu^{k+1}-\bu^k)+\rho W^{\top}(\by^{k}-\by^{k+1})-t_k(\bu^{k+1}-\bu^k)\right.\\
&&-(A^{\top}A+\beta W^{\top}W)W^{\dagger}W(\bu^{k+1}-\bu^{k})-(A^{\top}A+\beta W^{\top}W)W^{\dagger}(W\bu^{k}-\by^{k})\\
&&\left. -(A^{\top}A+\beta W^{\top}W)W^{\dagger}(\by^{k}-\by^{k+1})\right\|\\
&\le &\left\|W^{\top}(W^{\top})^{\dagger}(-A^{\top}A-\beta W^{\top}W)-(A^{\top}A+\beta W^{\top}W)W^{\dagger}W-t_k I\right\|_2\left\|\bu^{k+1}-\bu^{k}\right\|\\
&&+\left\|(A^{\top}A+\beta W^{\top}W)W^{\dagger}-\rho W^{\top}\right\|_2\left\|\by^{k}-\by^{k+1}\right\|\\
&&+\left\|(A^{\top}A+\beta W^{\top}W)\right\|_2\left\|W^{\dagger}(W\bu^{k}-\by^{k})\right\|\\
&\le &\left\|W^{\top}(W^{\top})^{\dagger}(-A^{\top}A-\beta W^{\top}W)-(A^{\top}A+\beta W^{\top}W)W^{\dagger}W-t_k I\right\|_2\left\|\bu^{k+1}-\bu^{k}\right\|\\
&&+\left\|(A^{\top}A+\beta W^{\top}W)W^{\dagger}-\rho W^{\top}\right\|_2\left\|\by^{k}-\by^{k+1}\right\|\\
&&+\left\|(A^{\top}A+\beta W^{\top}W)\right\|_2 \frac{\left\|A^{\top} A+(\beta +\rho)W^{\top} W+t_kI\right\|_2 \left\|W^{\dagger}\right\|_2^2}{\rho}\left\|\bu^{k+1}-\bu^k\right\|,
\end{eqnarray*}
where the last inequality follows from Lemma \ref{lem:3.4}. Hence,
\begin{align}
&\quad \left\|\ba^{k+1}\right\|^2\nonumber\\
 \le& 2 \left\|(A^{\top}A+\beta W^{\top}W)W^{\dagger}-\rho W^{\top}\right\|_2^2\left\|\by^{k}-\by^{k+1}\right\|_2^2 \nonumber\\
&+2\left[\left\|W^{\top}(W^{\top})^{\dagger}(-A^{\top}A-\beta W^{\top}W)(A^{\top}A+\beta W^{\top}W)W^{\dagger}W-t_k I\right\|_2\right.\nonumber\\
&\left.+\left\|(A^{\top}A+\beta W^{\top}W)\right\|_2\left(\left\|A^{\top} A+(\beta +\rho)W^{\top} W+t_kI\right\|_2 \left\|W^{\dagger}\right\|_2^2\right)/\rho\right]^2\|\bu^{k+1}-\bu^k\|^2\nonumber
\end{align}
Additionally, by the definition of $\bb^{k+1}$ we obtain
$\|\bb^{k+1}\|\le t_k\|\by^{k+1}-\by^k\|$.
Therefore,
\[
\|(\ba^{k+1},\bb^{k+1})\|\le \sqrt{\gamma}\|(\bu^{k+1},\by^{k+1})-(\bu^k,\by^k)\|.
\]
\end{proof}

Next, by leveraging the results from \cite{LP18} and \cite{WP21}, we first show that $h$ satisfies the KL property of exponent $1/2$. Subsequently, combining the aforementioned results with this $1/2$-KL property, we establish the linear convergence of the sequence generated by the Algorithm \ref{algor:rlmem}.

Let $\bx=[\bu^{\top},\by^{\top}]^{\top}\in\R^{m+n}$, then
\[
h(\bu,\by)=f(\bu)+\big\langle W\bu-\by ,\bz(\bu)\big\rangle+\lambda\|\by\|_0 +\frac{\beta}{2}\|W\bu\|^2+\frac{\rho}{2}\|W\bu-\by\|^2
\]
can be reformulate as
\begin{equation}\label{symh}
h(\bx)=\bx^{\top}M\bx+\bb^{\top}\bx+\nu+\lambda\sum_{i=n+1}^{m+n}\|x_i\|_0,
\end{equation}
where $M=
\begin{bmatrix}
M_{11} & M_{12} \\
M_{21} & M_{22}
\end{bmatrix}
$, $\bb=[\bb_1^{\top},\bb_2^{\top}]^{\top}$ with
\BE\label{equ:group}
\begin{aligned}
&M_{11}=\frac12\big(A^{\top}A+(\rho-\beta) W^{\top}W-W^{\top}(W^{\top})^{\dagger}A^{\top}A-A^{\top}AW^{\dagger}W\big),\\ &W_{12}=\frac12(\beta W^{\top}W+A^{\top}A)W^{\dagger}-\rho W^{\top}),\quad M_{21}=M_{12}^{\top},\quad M_{22}=\rho \frac12 I,\\ &\bb_1=-A^{\top}\bu_0+W^{\top}(W^{\top})^{\dagger}A^{\top}\bu_0,\\ &\bb_2=-(W^{\top})^{\dagger}A^{\top}\bu_0,\quad \nu=\frac12\bu_0^{\top}{\bu_0}.
\end{aligned}
\EE

\begin{theorem}\label{hKL}(Properties of function $h$)
The function $h$ has the $K L$ property of exponent $1 / 2$ at all critical points.
\end{theorem}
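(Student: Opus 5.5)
We will show that $h$ has the KL property with exponent $1/2$ at all critical points by using its reformulation \eqref{symh}. There, $h$ is a quadratic function of $\bx=[\bu^{\top},\by^{\top}]^{\top}$ plus the separable penalty $\lambda\sum_{i=n+1}^{m+n}\|x_i\|_0$ acting only on the $\by$-block. We write it piecewise over supports. For each index set $\cj\subset[m]$, let $S_\cj:=\{\bx:\ x_{n+i}=0,\ i\notin\cj\}$, a linear subspace. On $S_\cj$ define
\[
h_\cj(\bx)=\bx^{\top}M\bx+\bb^{\top}\bx+\nu+\lambda|\cj|+\delta_{S_\cj}(\bx),
\]
so that $h=\min_{\cj\subset[m]} h_\cj$. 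The key observation is that each $h_\cj$ is a quadratic function plus the indicator of a polyhedral set (a subspace). Moreover, $\|\by\|_0\le|\cj|$ on $S_\cj$, with equality exactly on the points whose support is $\cj$.

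The first step will invoke the known result (e.g.\ \cite{LP18}, via the Luo--Tseng error bound for quadratic programs over polyhedra) that a function of the form "quadratic $+$ indicator of a polyhedron" is a KL function of exponent $1/2$. This holds even when $M$ is indefinite, as it may be here, since $M_{11}$ involves the non-symmetric-looking pieces $W^{\top}(W^{\top})^{\dagger}A^{\top}A$, which symmetrize in $\bx^{\top}M\bx$. Concretely, on $S_\cj$ the function is a quadratic in the free variables, $\partial h_\cj(\bx)=\nabla q(\bx)+S_\cj^{\perp}$, and $\dist({\bf 0},\partial h_\cj(\bx))=\|P_{S_\cj}\nabla q(\bx)\|$. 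This is the gradient of a quadratic restricted to a subspace, for which the KL exponent-$1/2$ inequality follows from the standard fact that quadratics are KL with exponent $1/2$: near a critical point the value gap is bounded by a constant times the squared projected gradient.

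The second step is the min-of-functions rule: by \cite[Corollary~3.1]{LP18}, if $h=\min_{\cj} h_\cj$ with each $h_\cj$ proper, closed, and KL with exponent $1/2$ at $\bar\bx$, and $h$ is continuous on its domain (here the domain is everything and each $h_\cj$ is lsc), then $h$ has exponent $1/2$ at $\bar\bx$. If that rule needs $\bar\bx$ to be critical for the active pieces, we check this separately. Let $\bar\bx$ be critical for $h$ and let $\bar\cj=\supp(\bar\by)$; the active pieces at $\bar\bx$ are those $\cj\supseteq\bar\cj$ attaining $h(\bar\bx)$, which forces $\cj=\bar\cj$. By Lemma~\ref{Fder}, $0\in\partial h(\bar\bx)$ means that $\nabla_\bu q=0$ and that the components of $\nabla_\by q$ vanish on $\bar\cj$. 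This is exactly criticality of $h_{\bar\cj}$.

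The main obstacle is the localization. We must show that near $\bar\bx$, the pieces with $\cj\not\supseteq\bar\cj$ are inactive (they are infeasible near $\bar\bx$). We must also show that the pieces with $\cj\supsetneq\bar\cj$ carry value at least $h(\bar\bx)+\lambda$ up to an $o(1)$ perturbation, so they lie outside the level band $h(\bar\bx)<h<h(\bar\bx)+\mu$ once $\mu<\lambda$. Then, in a small neighborhood, every $\bx$ in the band satisfies $\supp(\by)=\bar\cj$ and $h=h_{\bar\cj}$ there. Also $\partial h(\bx)\subseteq\partial h_{\bar\cj}(\bx)$, because the $\by$-components off the support are unrestricted in both by the formula for $\partial\lambda\|\cdot\|_0$. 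The KL inequality for $h_{\bar\cj}$ with $\xi(s)=c\sqrt s$ then transfers directly to $h$, completing the proof.
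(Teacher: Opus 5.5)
Your proposal is correct and is essentially the paper's argument. The paper also restricts the quadratic part $g$ to the coordinates $\ci\cup\cj$ with $\cj=\supp(\bar{\by})$ and invokes \cite{LP18} for exponent $1/2$ of this restricted quadratic. It then localizes, using $\eta<\lambda$ and continuity of $g$, so that every $\bx$ near $\bar{\bx}$ in the level band has support exactly $\cj$; this gives $\dist({\bf 0},\partial h(\bx))\ge\|\nabla g_{\ci\cup\cj}(\bx_{\ci\cup\cj})\|$, and the inequality transfers. One correction to your optional detour: $h$ is not continuous on $\dom\,\partial h=\R^{n+m}$ (the $\ell_0$ term jumps), so your claim that the continuity hypothesis of the min-of-functions rule in \cite{LP18} holds is false, but your direct localization in the last paragraph makes that rule unnecessary.
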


\begin{proof}
Let $g(\bx)=\bx^{\top}M\bx+\bb^{\top}\bx+\nu$, where $M$, $\bb$ and $\nu$ are defined in \eqref{equ:group}. Fix an arbitrary $\bar{\bx} \in \operatorname{crit} h$, where $\operatorname{crit} h$ denotes the set of critical points of $h(\bu,\by)$. Let $\cj=\{i| \bar{x}_i\neq 0, i=n+1,\ldots,n+m\}$ and $\ci=[n]$.  We define $g_{\ci\cup\cj}(\bz):=g(I_{\ci\cup\cj} \bz)$ for $\bz\in\R^{|\ci\cup\cj|}$, where $I_{\ci\cup\cj}\in\R^{(n+m)\times |\ci\cup\cj|}$ is formed by extracting columns of the identity matrix using the elements of $\ci\cup\cj$ as indices. By \cite[Corollary 5.2]{LP18}, we know $g_{\ci\cup\cj}$ is a KL function of exponent $1 / 2$. Then, there exist $\delta_1>0, \eta_1>0$ and $c_1>0$ such that for all $\bz \in \mathbb{B}\left(\bar{\bx}_{\ci\cup\cj}, \delta_1\right) \cap\{\bz |g_{\ci\cup\cj}\left(\bar{\bx}_{\ci\cup\cj}\right)<g_{\ci\cup\cj}(\bz)<g_{\ci\cup\cj}\left(\bar{\bx}_{\ci\cup\cj}\right)+\eta_1\}$,
\begin{eqnarray}\label{fuzhuKLinequ1}
\operatorname{dist}\left(0, \partial_{g_{\ci\cup\cj}}(\bz)\right) \geq c_1 \sqrt{g_{\ci\cup\cj}(\bz)-g_{\ci\cup\cj}\left(\bar{\bx}_{\ci\cup\cj}\right)} .
\end{eqnarray}
Take $\eta_2 \in( 0, \lambda/ 3]$. Since $g(\bx)$ is continues, there exists $\delta_2>0$ such that
\begin{eqnarray}\label{fuzhuKLinequ2}
|g(\bx)-g(\bar{\bx})|<\eta_2 \quad \forall \bx \in \mathbb{B}\left(\bar{\bx}, \delta_2\right) \cap \operatorname{dom} \partial g.
\end{eqnarray}
Take $\delta=\min \left(\delta_1, \delta_2\right)$ and $\eta=\min \left(\eta_1, \eta_2\right)$. Pick an arbitrary $\bx$ from the set $\mathbb{B}(\bar{\bx}, \delta) \cap \{\bx |h(\bar{\bx})<h(\bx)<h(\bar{\bx})+\eta\}$. We prove the arguments by two cases.
\begin{itemize}
\item Case 1: $\bx \in \operatorname{dom} \partial h$. We have
\end{itemize}
\begin{eqnarray}\label{fuzhuKLinequ3}
\partial h(\bx) \subseteq \nabla g(\bx)+\lambda\sum_{i=n+1}^{m+n}\partial\|x_i\|_0.
\end{eqnarray}
Next, we prove $g(\bx)>g(\bar{\bx})$ by contradiction. 
Suppose $g(\bx) \leq g(\bar{\bx})$. Combining with \eqref{fuzhuKLinequ2}, we have $g(\bx)<g(\bar{\bx})+\eta_2$.
 Moreover, since $h(\bar{\bx})<h(\bx)<h(\bar{\bx})+\eta$, that is,
 \begin{equation}\label{contra1}
g(\bar{\bx})+ \lambda\sum_{i=n+1}^{m+n}\|\bar{x}_i\|_0 <g(\bx)+\lambda\sum_{i=n+1}^{m+n}\|x_i\|_0<g(\bar{\bx})+ \lambda\sum_{i=n+1}^{m+n}\|\bar{x}_i\|_0+\eta.
 \end{equation}
By  \eqref{contra1}, $g(\bx)\le g(\bar{\bx})$ and $g(\bx)<g(\bar{\bx})+\eta_2$,  we have
\[\sum_{i=n+1}^{m+n}\|\bar{x}_i\|_0 \leq\sum_{i=n+1}^{m+n}\|x_i\|_0-1<\sum_{i=n+1}^{m+n}\|\bar{x}_i\|_0+\frac{1}{\lambda}\left(\eta+\eta_2\right)-1<\sum_{i=n+1}^{m+n}\|\bar{x}_i\|_0.
\]
Therefore, $g(\bx)>g(\bar{\bx})$.

By $h(\bx)<h(\bar{\bx})+\eta$, we obtain that $\sum_{i=n+1}^{m+n}\|x_i\|_0 <\sum_{i=n+1}^{m+n}\|\bar{x}_i\|_0+\eta/\lambda$, that is, $\sum_{i=n+1}^{m+n}\|x_i\|_0 \leq\sum_{i=n+1}^{m+n}\|\bar{x}_i\|_0$. In addition, according to $\bx\in\mathbb{B}(\bar{\bx},\delta)$, when $\delta$ is sufficiently small, we can deduce that $\sum_{i=n+1}^{m+n}\|x_i\|_0 \geq\sum_{i=n+1}^{m+n}\|\bar{x}_i\|_0$ and $\{i|x_i\neq 0, i=n+1,\ldots, m+n\}\supseteq \{i|\bar{x}_i\neq 0, i=n+1,\ldots, m+n\}$. Thus, $\sum_{i=n+1}^{m+n}\|x_i\|_0=\sum_{i=n+1}^{m+n}\|\bar{x}_i\|_0$ and
\BE\label{fuzhuKLinequ4}
\{i|x_i\neq 0, i=n+1,\ldots, m+n\}=\{i|\bar{x}_i\neq 0, i=n+1,\ldots, m+n\}=\cj.
\EE
We denote $\zeta^*=\nabla g(x)$. By virtue of \eqref{fuzhuKLinequ3}, we obtain
\begin{eqnarray}\label{fuzhuKLinequ5}
\operatorname{dist}(0, \partial h(\bx)) & \geq& \operatorname{dist}(0, \nabla g(\bx)+\lambda\sum_{i=n+1}^{m+n}\partial\|x_i\|_0) \nonumber\\
& =&\min_{\zeta = \nabla g(\bx), \xi \in \lambda \sum_{i=n+1}^{m+n}\partial\|x_i\|_0}\|\zeta+\xi\|=\|\zeta_{\ci\cup\cj}^*\| .
\end{eqnarray}
By the definition of $g_{\ci\cup\cj}$, we have
\begin{eqnarray}\label{fuzhuKLinequ6}
\nabla g_{\ci\cup\cj}\left(\bx_{\ci\cup\cj}\right) =I_{\ci\cup\cj}^{\top} \nabla g\left(I_{\ci\cup\cj} \bx_{\ci\cup\cj}\right).
\end{eqnarray}
In addition, using $\zeta^* = \nabla g(\bx)$ and $\bx=I_{\ci\cup\cj} \bx_{\ci\cup\cj}$, we have
\[\zeta_{\ci\cup\cj}^* = I_{\ci\cup\cj}^{\top} \nabla g(x)=I_{\ci\cup\cj}^{\top} \nabla g\left(I_{\ci\cup\cj}\bx_{\ci\cup\cj}\right).
\]
From the equation \eqref{fuzhuKLinequ6}, it follows that $\zeta_{\ci\cup\cj}^* = \nabla g_{\ci\cup\cj}\left(\bx_{\ci\cup\cj}\right)$. Moreover, by \eqref{fuzhuKLinequ5}, we obtain
\begin{eqnarray}\label{fuzhuKLinequ7}
\operatorname{dist}(0, \partial h(\bx)) \geq\left\|\zeta_{\ci\cup\cj}^*\right\| = \operatorname{dist}\left(0, \nabla g_{\ci\cup\cj}\left(\bx_{\ci\cup\cj}\right)\right) .
\end{eqnarray}
Recall that $\bx \in\{\bv |h(\bar{\bx})<h(\bv)<h(\bar{\bx})+\eta\}$. By invoking \eqref{fuzhuKLinequ4}, it follows that
\[
h(\bx)=g\left(I_{\ci\cup\cj} \bx_{\ci\cup\cj}\right)+\lambda|\cj|=g_{\ci\cup\cj}\left(\bx_{\ci\cup\cj}\right) +\lambda|\cj| \text { and } h(\bar{\bx})=g_{
\ci\cup\cj}\left(\bar{\bx}_{\ci\cup\cj}\right)+\lambda|\cj| .
\]
Thus, $\bx_{\ci\cup\cj} \in\{\bz|g_{\ci\cup\cj}\left(\bar{\bx}_{\ci\cup\cj}\right)<g_{\ci\cup\cj}(\bz)<g_{\ci\cup\cj}\left(\bar{\bx}_{\ci\cup\cj}\right)+\eta_1\}$. By $\bx \in \mathbb{B}\left(\bar{\bx}, \delta\right)$, $\delta\le \delta_1$ and $\|\bx-\bar{\bx}\|=\|\bx_{\ci\cup\cj}-\bar{\bx}_{\ci\cup\cj}\|$, we have $\bx_{\ci\cup\cj} \in \mathbb{B}\left(\bar{\bx}_{\ci\cup\cj}, \delta_1\right)$, by \eqref{fuzhuKLinequ7} and \eqref{fuzhuKLinequ1},
\begin{eqnarray*}
\operatorname{dist}(0, \partial h(\bx)) &\geq& \operatorname{dist}\left(0, \partial g_{\ci\cup\cj}\left(\bx_{\ci\cup\cj}\right)\right) \\
&\geq& c_1 \sqrt{g_{\ci\cup\cj}\left(\bx_{\ci\cup\cj}\right)-g_{\ci\cup\cj}\left(\bar{\bx}_{\ci\cup\cj}\right)}=c_1 \sqrt{h(\bx)-h(\bar{\bx})} .
\end{eqnarray*}
\begin{itemize}
  \item Case 2: $\bx \notin \operatorname{dom} \partial h$.
\end{itemize}
In this case, $\partial h(\bx)=\varnothing$, and $\operatorname{dist}(0, \partial h (\bx))=\infty$. This implies that the final inequality holds automatically.

Now, by the arbitrariness of $\bx$ in $\mathbb{B}(\bar{\bx}, \delta) \cap\{\bx|h(\bar{\bx})<h(\bx)<h(\bar{\bx})+\eta]$, the final inequality implies that $h$ has the KL property of exponent $1 / 2$ at $\bar{\bx}$. Given the arbitrariness of $\bar{\bx} \in \operatorname{crit} h$, the desired conclusion follows.
\end{proof}

In the following, utilizing the properties of $h$ and the above results, we prove that the sequence generated by the Algorithm \ref{algor:rlmem} converges linearly.

\begin{theorem}\label{sequcon}
Under the same assumptions as in Lemma \ref{hfunbound},  let $\{(\bu^k,\by^k)\}$ be the sequence generated by Algorithm \ref{algor:rlmem} which is assumed to be bounded. 
Then the sequence $\{(\bu^k,\by^k)\}$ converges at least linearly to a limiting critical point $(\bu^*,\by^*)$ of $h$, i.e., there exist constants $c_1>0$ and $\tau\in [0,1)$ such that
\[
\|(\bu^k,\by^k)-(\bu^*,\by^*)\|\le c_1\tau^k~~\mbox{for}~k=0,1,\ldots
\]
\end{theorem}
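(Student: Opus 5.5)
The plan is the standard Attouch--Bolte--Svaiter argument for KL functions with exponent $1/2$, applied to $\bx^k=(\bu^k,\by^k)$. The ingredients are three: the sufficient decrease of Lemma \ref{hfunbound}, the relative error bound of Lemma \ref{hgribound}, and the KL property of Theorem \ref{hKL}.

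\textbf{Step 1 (limit points and limiting value).} By Lemma \ref{hfunbound}, $\{h(\bx^k)\}$ is nonincreasing. Since $h$ is bounded below on the bounded sequence, $h(\bx^k)\downarrow h^*$ and $\sum_k\|\bx^{k+1}-\bx^k\|^2<\infty$, so in particular $\|\bx^{k+1}-\bx^k\|\to0$. Let $\Omega$ be the set of accumulation points; it is nonempty and compact, and $\operatorname{dist}(\bx^k,\Omega)\to0$.

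\textbf{Step 2 ($h$ is constant on $\Omega$ and $\Omega\subset\operatorname{crit}h$).} Take a subsequence $\bx^{k_j}\to\bar\bx$. Lemma \ref{hgribound} gives $(\ba^{k_j},\bb^{k_j})\in\partial h(\bx^{k_j})$ with norm tending to zero. It remains to show $h(\bx^{k_j})\to h(\bar\bx)$, and this is the delicate point because of the $\ell_0$ term. Lower semicontinuity gives $h(\bar\bx)\le\liminf h(\bx^{k_j})$. For the reverse inequality, use the $\by$-subproblem \eqref{rquesy}: its objective at $\by^{k+1}$ is at most its value at $\by=\bar\by$. Pass to the limit, using $\|\by^{k+1}-\by^k\|\to0$, continuity of $\bz(\cdot)$, and boundedness of $t_k\le t$. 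This yields $\limsup\lambda\|\by^{k_j}\|_0+(\text{continuous part})\le \lambda\|\bar\by\|_0+(\text{continuous part at }\bar\bx)$. Hence $h(\bx^{k_j})\to h(\bar\bx)=h^*$. Closedness of the limiting subdifferential then gives $0\in\partial h(\bar\bx)$.

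\textbf{Step 3 (uniformized KL and finite length).} If $h(\bx^K)=h^*$ for some $K$, the decrease inequality forces the sequence to be eventually constant, and we are done. Otherwise $h(\bx^k)>h^*$ for all $k$. Theorem \ref{hKL} gives the KL property with $\xi(s)=c\sqrt s$ at every point of $\Omega$, and a standard compactness/uniformization argument gives common $\delta,\eta,c$ valid near all of $\Omega$. For large $k$, combine the KL inequality with Lemma \ref{hgribound} and then with Lemma \ref{hfunbound}:
\[
\frac{c}{2}\big(h(\bx^k)-h^*\big)^{-1/2}\sqrt{\gamma}\,\|\bx^k-\bx^{k-1}\|\ge1 .
\]
Setting $r_k=h(\bx^k)-h^*$, this gives $r_k\le C\|\bx^k-\bx^{k-1}\|^2\le C'(r_{k-1}-r_k)$. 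Hence $r_k\le \frac{C'}{1+C'}r_{k-1}$, so $r_k$ decays Q-linearly.

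\textbf{Step 4 (convergence and linear rate of the iterates).} From Lemma \ref{hfunbound}, $\|\bx^{k+1}-\bx^k\|\le\sqrt{4/c}\,\sqrt{r_k-r_{k+1}}\le\sqrt{4r_k/c}$. Therefore $\sum_{j\ge k}\|\bx^{j+1}-\bx^j\|\le C''\sum_{j\ge k}\sqrt{r_j}$, which is a geometric tail. So $\{\bx^k\}$ is Cauchy and converges to some $\bx^*=(\bu^*,\by^*)\in\Omega\subset\operatorname{crit}h$. Moreover $\|\bx^k-\bx^*\|\le c_1\tau^k$ with $\tau=\sqrt{C'/(1+C')}$, where $c_1$ is enlarged to cover the finitely many initial indices.

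The main obstacle is Step 2, namely continuity of $h$ along the iterates despite the discontinuous $\ell_0$ term. The subproblem-optimality comparison sketched above is the remedy I would use there. A secondary technical point is that Lemma \ref{hgribound} bounds $\operatorname{dist}(0,\partial h(\bx^{k}))$ by $\|\bx^k-\bx^{k-1}\|$ only up to the index shift, which the bookkeeping in Step 3 must track.
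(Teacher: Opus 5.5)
Your proposal is correct and follows essentially the same route as the paper. Both use sufficient decrease (Lemma \ref{hfunbound}), the relative error bound (Lemma \ref{hgribound}) and the exponent-$1/2$ KL property (Theorem \ref{hKL}), and both compare with the $\by$-subproblem \eqref{rquesy} at $\by=\bar{\by}$ to get $\lambda\|\by^{k_j}\|_0\to\lambda\|\bar{\by}\|_0$, hence $h(\bx^{k_j})\to h(\bar{\bx})$. The only difference is that the paper then cites the Attouch--Bolte--Svaiter convergence theorem and the Attouch--Bolte rate theorem, whereas you carry the argument out explicitly via a uniformized KL inequality on the limit set and the recursion $r_k\le \frac{C'}{1+C'}r_{k-1}$, which is a valid self-contained substitute.
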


\begin{proof}
Let $(\bu^*,\by^*)$ be an accumulation point of the sequence $\{\bu^k,\by^k\}$. Then there exists a subsequence  $\{\bu^{k_t},\by^{k_t}\}$ converging to  $(\bu^*,\by^*)$. We note that $\lambda \|\by\|_0$ is lower semicontinuous, thus
\BE\label{eq:phi-inf}
\liminf_{t\to\infty}\lambda\|\by^{k_t}\|_0\ge\lambda\|\by^{*}\|_0.
\EE
Using \eqref{rquesy} we obtain
\begin{eqnarray}\label{eq:ykystar}
\lambda \|\by^{k+1}\|_0&+&\big\langle W\bu^{k+1}-\by^{k+1} ,\bz^{k+1}) \big\rangle +\frac{t_k}{2}\|\by^{k+1}-\by^k\|^2+\frac{\rho}{2}\|W\bu^{k+1}-\by^{k+1}\|^2 \nonumber\\
&\le&  \lambda \|\by^*\|_0+\big\langle W\bu^{k+1}-\by^* ,\bz^{k+1}) \big\rangle +\frac{t_k}{2}\|\by^*-\by^k\|^2+\frac{\rho}{2}\|W\bu^{k+1}-\by^*\|^2.
\end{eqnarray}
By virtue of Lemma \ref{hfunbound}, we have
\begin{eqnarray*}
h(\bu^0,\by^0)-h(\bu^{k+1},\by^{k+1})&=&\sum_{i=0}^{k}(h(\bu^i,\by^i)-h(\bu^{i+1},\by^{i+1}))\\
&\ge&\frac c4\sum_{i=0}^{k}(\|\bu^{i+1}-\bu^i\|^2+\|\by^{i+1}-\by^i\|^2).
\end{eqnarray*}
By the boundedness of the sequence $\{(\bu^k, \by^k)\}$, $\{h(\bu^k, \by^k)\}$ is bounded below, and we have
\[
\lim\limits_{k\to\infty}\|(\bu^{k+1},\by^{k+1})-(\bu^k,\by^k)\|=0.
\]
Hence, taking $k=k_t-1$ in \eqref{eq:ykystar} and let $t\to\infty$, we obtain
\[
\limsup_{t\to\infty}\lambda\|\by^{k_t}\|_0 \le \lambda\|\by^*\|_0.
\]
This, together with \eqref{eq:phi-inf}, implies that
\[
\lim_{t\to\infty}\lambda \|\by^{k_t}\|_0 =\lambda \|\by^*\|_0.
\]
Therefore,
\begin{eqnarray}\label{eq:F-lim}
\lim_{t\to\infty}h(\bu^{k_t},\by^{k_t}) &=& \lim_{t\to\infty}( f(\bu^{k_t})+\langle W\bu^{k_t}-\by^{k_t} , \bz(\bu^{k_t},\by^{k_t})\rangle+\lambda\|\by^{k_t}\|_0 +\frac{\beta}{2}\|W\bu^{k_t}\|^2\nonumber\\
&&+\frac{\rho}{2}\|W\bu^{k_t}-\by^{k_t}\|^2 ) \nonumber\\
&=&h(\bu^*,\by^*).
\end{eqnarray}

According to \cite[Theorem 2.9]{AB13}, if $h$ satisfies the KL property, and the sequence $\{(\bu^k, \by^k)\}$ simultaneously satisfies the sufficient decrease condition, the relative error condition, and the continuity condition, then the sequence converges to a critical point of $h$. Clearly, Theorem \ref{hKL} shows that  $h$ satisfies the KL property, while Lemma \ref{hfunbound}, Lemma \ref{hgribound} and \eqref{eq:F-lim} demonstrate that the sequence $\{\bu^k, \by^k\}$ satisfies the sufficient decrease condition, the relative error condition, and the continuity condition, respectively. Thus, we can conclude that the sequence $\{(\bu^k,\by^k)\}$ converges to a limiting critical point $\{(\bu^*,\by^*)\}$ of the function $h$. Moreover, by \cite[Theorem 3.4]{AB10}, there exist constants $c_1>0$ and $\tau \in[0,1)$ such that
\[
\|(\bu^{k+1}, \by^{k+1})-(\bu^*, \by^*)\|_F \leq c_1 \tau^k \quad \text { for } k=0,1, \ldots
\]
Therefore, the sequence $\{(\bu^k,\by^k)\}$ converges at least linearly to the limiting critical point $\{(\bu^k,\by^k)\}$ of function $h$.
\end{proof}

Next, we demonstrate that the limit point of the sequence generated by the Algorithm \ref{algor:rlmem} is a KKT point of model \eqref{equ:reformu}.

\begin{theorem}\label{finalres1}
Under the same assumptions as in Lemma \ref{hfunbound}, let $ \{(\bu^k, \by^k)\}$ be the bounded sequence generated by Algorithm \ref{algor:rlmem}. Then, every limit point $(\bu^*, \by^*)$ of $\{(\bu^k, \by^k)\}$ is a KKT point of \eqref{equ:reformu}.
\end{theorem}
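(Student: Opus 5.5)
We need to show that a limit point $(\bu^*,\by^*)$ of the bounded sequence satisfies \eqref{KKTfir}--\eqref{KKTthi} with $\bz^*=\bz(\bu^*)$. By Theorem \ref{sequcon} the whole sequence converges to $(\bu^*,\by^*)$. Summing the descent inequality of Lemma \ref{hfunbound} gives $\|\bu^{k+1}-\bu^k\|\to0$ and $\|\by^{k+1}-\by^k\|\to0$.

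\emph{Feasibility \eqref{KKTthi}.} Apply Lemma \ref{lem:3.4}: the factor $\rho/(\|A^{\top}A+(\beta+\rho)W^{\top}W+t_kI\|\,\|W^{\dagger}\|^2)$ is bounded below uniformly in $k$, because $t_k\le t$. Hence $\|W^{\dagger}(W\bu^k-\by^k)\|\to0$. Since $W$ has full row rank, $WW^{\dagger}=I_m$, so $\|W\bu^k-\by^k\|\le\|W\|\,\|W^{\dagger}(W\bu^k-\by^k)\|\to0$. Passing to the limit gives $W\bu^*=\by^*$.

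\emph{Condition \eqref{KKTfir}.} Set $\bz^*=\bz(\bu^*)=(W^{\top})^{\dagger}(-\nabla f(\bu^*)-\beta W^{\top}W\bu^*)$. The map $\bz(\cdot)$ is affine, hence continuous, so $\bz^k\to\bz^*$. Take limits in the $\bu$-optimality condition \eqref{derirquesu}. The terms $t_k(\bu^{k+1}-\bu^k)\to0$ because $t_k\le t$, and $\rho W^{\top}(W\bu^{k+1}-\by^k)\to\rho W^{\top}(W\bu^*-\by^*)=0$. This yields $\nabla f(\bu^*)+\beta W^{\top}W\bu^*+W^{\top}\bz^*={\bf 0}$. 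One could also check this directly: $W^{\top}(W^{\top})^{\dagger}$ is the orthogonal projector onto $\mathrm{range}(W^{\top})$, so the identity holds exactly when $\nabla f(\bu^*)+\beta W^{\top}W\bu^*\in\mathrm{range}(W^{\top})$, which is what the limit shows.

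\emph{Condition \eqref{KKTsec}.} This is the main obstacle, because $\partial\|\cdot\|_0$ is only outer semicontinuous in a limiting sense. By the closed form \eqref{solutionry}, $\by^{k+1}=\ch_{\tau_k}(\bv^k)$ with $\tau_k=\sqrt{2\lambda/(\rho+t_k)}$ and
\[
\bv^k=\frac{\rho}{\rho+t_k}W\bu^{k+1}+\frac{t_k}{\rho+t_k}\by^k+\frac{1}{\rho+t_k}\bz^{k+1}.
\]
The $\by$-optimality condition supplies $\eta^{k+1}\in\partial\lambda\|\by^{k+1}\|_0$ with
\[
\eta^{k+1}=\bz^{k+1}-t_k(\by^{k+1}-\by^k)-\rho(\by^{k+1}-W\bu^{k+1})\to\bz^*.
\]
We then argue componentwise.
\begin{itemize}
\item If $y^*_i=0$, then $(\partial\lambda\|\by^*\|_0)_i=\R$, so nothing needs to be shown.
\item If $y^*_i\neq0$, then $y^{k+1}_i\ne0$ for all large $k$. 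Hence $\eta^{k+1}_i=0$, and so $z^*_i=\lim_k\eta^{k+1}_i=0$.
\end{itemize}
In both cases $z^*_i\in(\partial\lambda\|\by^*\|_0)_i$, i.e. ${\bf 0}\in\partial\lambda\|\by^*\|_0-\bz^*$. This gives \eqref{KKTsec}. Together with the first two steps, $(\bu^*,\by^*,\bz^*)$ satisfies \eqref{KKTfir}--\eqref{KKTthi}, so $(\bu^*,\by^*)$ is a KKT point of \eqref{equ:reformu}.
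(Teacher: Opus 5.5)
Your proof is correct, but it obtains feasibility by a different route from the paper's.

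\emph{The paper's route.} The paper first passes to the limit in \eqref{derirquesu} and gets $\nabla f(\bu^*)+W^{\top}\bz^*+\beta W^{\top}W\bu^*+\rho W^{\top}(W\bu^*-\by^*)={\bf 0}$. With $\bz^*=\bz(\bu^*)$, the first three terms equal $(I-W^{\top}(W^{\top})^{\dagger})(\nabla f(\bu^*)+\beta W^{\top}W\bu^*)$, which lies in $\mathrm{range}(W^{\top})^{\perp}$. The last term lies in $\mathrm{range}(W^{\top})$. So both vanish, and injectivity of $W^{\top}$ gives $W\bu^*=\by^*$ and \eqref{KKTfir} at once.

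\emph{Your route.} You apply Lemma \ref{lem:3.4} before taking limits, using $t_k\le t$ for a uniform constant and $WW^{\dagger}=I$ to pass from $W^{\dagger}(W\bu^k-\by^k)$ to $W\bu^k-\by^k$. Lemma \ref{lem:3.4} is itself proved with the same projector, so this is a quantitative, pre-limit version of the paper's orthogonality argument. It gives $\|W\bu^k-\by^k\|\le C\|\bu^{k+1}-\bu^k\|$ with $C$ independent of $k$, along the whole sequence. After that, \eqref{KKTfir} is a routine continuity limit. The paper's argument works only at the limit point, but it needs nothing beyond the limiting stationarity equation.

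\emph{The inclusion \eqref{KKTsec}.} Your componentwise argument is that $y_i^*\neq0$ forces $y_i^{k+1}\neq0$ eventually, hence $\eta_i^{k+1}=0$. This supplies exactly the closedness of the graph of $\partial\lambda\|\cdot\|_0$ that the paper uses without comment when it passes to the limit and asserts $\eta^*\in\partial\lambda\|\by^*\|_0$. Your version is more complete at this step.

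\emph{Theorem \ref{sequcon} is not needed.} You can run the same argument along a subsequence $(\bu^{k_j},\by^{k_j})\to(\bu^*,\by^*)$, using the optimality conditions at index $k_j-1$ and the vanishing successive differences. Those differences follow from summing Lemma \ref{hfunbound}, since $h$ is bounded below on the bounded iterates; you should state this explicitly, as your proposal leaves it implicit. Done this way, the result no longer depends on the KL-based convergence theorem.
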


\begin{proof}
We firstly write the KKT point of \eqref{equ:reformu}. A point $(\bu, \by)$ is called a KKT point of  \eqref{equ:reformu} if there exists $\bz$ such that
\[
{\bf 0}=\nabla f(\bu)+W^{\top}\bz+\beta W^{\top}W\bu, \quad {\bf 0}\in-\bz +\partial \lambda \|\by\|_0, \quad W\bu-\by={\bf 0}.
\]
By \eqref{rquesu} we have
\begin{eqnarray*}
&&\nabla f(\bu^{k+1})+W^{\top}\bz^k+\beta W^{\top}W\bu^{k+1}+t_k(\bu^{k+1}-\bu^k)+\rho W^{\top}(W\bu^{k+1}-\by^k)={\bf 0}.
\end{eqnarray*}
Let $k\to\infty$,  by the continuity of $\nabla f(\bu)$ and $\bz(\bu)$ together with Theorem \ref{sequcon}, we have
\begin{eqnarray}\label{fuzhulimkkt}
\nabla f(\bu^*)+W^{\top}\bz^*+\beta W^{\top}W\bu^{*}+\rho W^{\top}(W\bu^{*}-\by^*)={\bf 0}.
\end{eqnarray}
Then, by $\nabla f(\bu^*)+\beta W^{\top}W\bu^{*}=(I-W^{\top}(W^{\top})^{\dagger})(\nabla f(\bu^*)+\beta W^{\top}W\bu^{*})+W^{\top}(W^{\top})^{\dagger}$ $(\nabla f(\bu^*)+\beta W^{\top}W\bu^{*})$, we have
\begin{eqnarray*}
&& (I-W^{\top}(W^{\top})^{\dagger})(\nabla f(\bu^*)+\beta W^{\top}W\bu^{*})+W^{\top}(W^{\top})^{\dagger}(\nabla f(\bu^*)+\beta W^{\top}W\bu^{*})\\
&&+W^{\top}\bz^*+\rho W^{\top}(W\bu^{*}-\by^*)={\bf 0}.
\end{eqnarray*}
Obviously, $(I-W^{\top}(W^{\top})^{\dagger})(\nabla f(\bu^*)+\beta W^{\top}W\bu^{*})\in\Range(I-W^{\top}(W^{\top})^{\dagger})$ and $W^{\top}(W^{\top})^{\dagger}(\nabla f(\bu^*)+\beta W^{\top}W\bu^{*})
+W^{\top}\bz^*+\rho W^{\top}(W\bu^{*}-\by^*)\in\Range(W^{\top})$. Furthermore, since $\Range(I-W^{\top}(W^{\top})^{\dagger}) $ is the orthogonal complement of $\Range(W^{\top})$, it follows that $\Range(I-W^{\top}(W^{\top})^{\dagger}) \cap\Range(W^{\top})=\{\mathbf{0}\}$. Thus, we have
\[
(I-W^{\top}(W^{\top})^{\dagger})(\nabla f(\bu^*)+\beta W^{\top}W\bu^{*})={\bf 0}
\]
and 
\[
W^{\top}(W^{\top})^{\dagger}(\nabla f(\bu^*)+\beta W^{\top}W\bu^{*})
+W^{\top}\bz^*+\rho W^{\top}(W\bu^{*}-\by^*)={\bf 0}.
\]

By the definition of $\bz^*$, we have $W^{\top}(W^{\top})^{\dagger}(\nabla f(\bu^*)+\beta W^{\top}W\bu^{*})
+W^{\top}\bz^*={\bf 0}$. Then,
$\rho W^{\top}(W\bu^{*}-\by^*)={\bf 0}$.  Since $W^{\top}$ is of full column rank, then 
\BE\label{equ:kkt1}
W\bu^{*}-\by^*={\bf 0}.
\EE
Substituting the result into \eqref{fuzhulimkkt}, we have
\BE\label{equ:kkt2}
\nabla f(\bu^*)+W^{\top}\bz^*+\beta W^{\top}W\bu^{*}=0.
\EE

Next, we prove the second equation of the KKT conditions. Since $(\bu^*,\by^*)$ is the limit point of sequence of $\{\bu^k,\by^k\}$, let $k\to\infty$, Combining with \eqref{rquesy} and $(W\bu^{*}-\by^*)={\bf 0}$, we have
\begin{eqnarray*}
(W^{\top})^{\dagger}(A^{\top}(A\bu^*-\bu_0)-\beta W^{\top}W\bu^*)+\eta^*={\bf 0},
\end{eqnarray*}
where $\eta^*\in \partial \lambda \|\by^*\|_0$. That is, 
\BE\label{equ:kkt3}
-\bz^*+\eta^*={\bf 0}.
\EE
Combining equations \eqref{equ:kkt1}--\eqref{equ:kkt3}, this shows that $(\bu^*,\by^*)$ is KKT point of \eqref{equ:reformu}.
\end{proof}

\begin{remark}
From the proof process of Theorem \ref{finalres1}, it is easy to verify that if $(\bu^*,\by^*)$ is a KKT point of \eqref{equ:reformu}, then $(\bu^*,\by^*)$ must be a critical point of $h$. Conversely, if $(\bu^*,\by^*)$ is a critical point of $h$ and satisfies $W\bu^* = \by^*$, then $(\bu^*,\by^*)$ is also a KKT point of \eqref{equ:reformu}.
\end{remark}

\subsection{Convergence Analysis for Algorithm \ref{algor:clmem}}
Next, we conduct the convergence analysis of Algorithm \ref{algor:clmem}. Similar to the preceding subsection, we first demonstrate that the constraint violation at the current iterate can be bounded by the norm of difference between the updated value and the current iterate.

\begin{lemma}\label{lemmaguk+1uk}
Let $\{(\bu^k,\by^k)\}$ be the sequence generated by Algorithm \ref{algor:clmem}, if $\rank((W^{\top})_{\mathcal{C}^k})>0$, then
\[
\left\|\bu^{k+1}-\bu^k\right\|\ge \frac{\rho\left\|(((W^{\top})_{\mathcal{C}^k})^{\dagger})^{\top} (W\bu^k-\by^k)_{\mathcal{C}^k}\right\|}{\left\|A^{\top} A+(\beta +\rho)W^{\top} W+t_kI\right\|\left\|((W^{\top})_{\mathcal{C}^k})^{\dagger}\right\|^2} .
\]
\end{lemma}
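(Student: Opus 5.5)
The plan is to mimic the proof of Lemma \ref{lem:3.4}, with $(W^\top)^\dagger$ replaced by the pseudoinverse of the column submatrix $B:=(W^\top)_{\mathcal{C}^k}$. Throughout, set $D_k:=A^\top A+(\beta+\rho)W^\top W+t_kI$ and $\bg^k:=-\nabla f(\bu^k)-\beta W^\top W\bu^k$.

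First, use the closed form \eqref{equ:uk} from Theorem \ref{solutionguy}(i), or directly the optimality condition of \eqref{cquesu}. Subtracting $D_k\bu^k$ gives
\[
\bu^{k+1}-\bu^k=D_k^{-1}\big(\bg^k-W^\top\bz(\bu^k,\by^k)-\rho W^\top(W\bu^k-\by^k)\big).
\]
Because $(\bz(\bu^k,\by^k))_{\mathcal{I}^k}={\bf 0}$, we have $W^\top\bz(\bu^k,\by^k)=B\,(\bz(\bu^k,\by^k))_{\mathcal{C}^k}=BB^\dagger\bg^k$. Hence the bracket equals $(I-BB^\dagger)\bg^k-\rho W^\top(W\bu^k-\by^k)$. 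The inequality $\|D_k^{-1}\bx\|\ge\|\bx\|/\|D_k\|$ then gives
\[
\|\bu^{k+1}-\bu^k\|\ge \frac{\|(I-BB^\dagger)\bg^k-\rho W^\top(W\bu^k-\by^k)\|}{\|D_k\|}.
\]

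Next, apply $B^\dagger$ and use $\|B^\dagger\bx\|\le\|B^\dagger\|\|\bx\|$; the hypothesis $\rank(B)>0$ guarantees $B^\dagger\neq 0$, so we may divide by $\|B^\dagger\|$. Since $B^\dagger(I-BB^\dagger)=0$, the first term disappears and we get
\[
\|\bu^{k+1}-\bu^k\|\ge \frac{\rho\|B^\dagger W^\top(W\bu^k-\by^k)\|}{\|D_k\|\|B^\dagger\|}.
\]
Now split $W^\top(W\bu^k-\by^k)=B(W\bu^k-\by^k)_{\mathcal{C}^k}+(W^\top)_{\mathcal{I}^k}(W\bu^k-\by^k)_{\mathcal{I}^k}$.

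The main obstacle is the $\mathcal{I}^k$-part, which does not vanish in general. One route is to check whether $\by^k$ from Theorem \ref{solutionguy}(ii) satisfies $y^k_i=(W\bu^k)_i$ on $\mathcal{I}^k$. It does for $k\ge1$ under the assumptions of that theorem, and also for $k=0$ since $\by^0=W\bu^0$. So $(W\bu^k-\by^k)_{\mathcal{I}^k}={\bf 0}$, and the residual is supported on $\mathcal{C}^k$; I will invoke this to kill that term. This leaves $\|B^\dagger B\,\br\|$ with $\br:=(W\bu^k-\by^k)_{\mathcal{C}^k}$.

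Finally, mirror \eqref{wwandw}. Write the thin SVD $B=U_r\Sigma_rV_r^\top$, so that $(B^\dagger)^\top=U_r\Sigma_r^{-1}V_r^\top$ and $B^\dagger B=V_rV_r^\top$. Then
\[
\|(B^\dagger)^\top\br\|=\|\Sigma_r^{-1}V_r^\top\br\|\le\|B^\dagger\|\,\|V_r^\top\br\|=\|B^\dagger\|\,\|B^\dagger B\br\|.
\]
Combining this with the previous display yields the claimed bound with $\|B^\dagger\|^2$ in the denominator.
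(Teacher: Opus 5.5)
Your proposal is correct and follows the paper's proof essentially step for step. The shared steps are: the optimality condition of \eqref{cquesu} together with $W^{\top}\bz(\bu^k,\by^k)=BB^{\dagger}(-\nabla f(\bu^k)-\beta W^{\top}W\bu^k)$; the lower bound through $\|D_k\|$; annihilating the $(I-BB^{\dagger})$-term by applying $B^{\dagger}$; using $y_i^k=(W\bu^k)_i$ on $\mathcal{I}^k$ to reduce $B^{\dagger}W^{\top}(W\bu^k-\by^k)$ to $B^{\dagger}B\br$; and the thin-SVD estimate $\|(B^{\dagger})^{\top}\br\|\le\|B^{\dagger}\|\,\|B^{\dagger}B\br\|$, which the paper simply carries out first as \eqref{gwwandw}. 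Your explicit observations that $\mathrm{rank}(B)>0$ guarantees $B^{\dagger}\neq 0$, and that the $\mathcal{I}^k$ identity also holds at $k=0$ because $\by^0=W\bu^0$, fill in small points the paper leaves implicit.
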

\begin{proof}
Let $(W^{\top})_{\mathcal{C}^k}=U_r\Sigma_rV_r^{\top}$ with $\rank((W^{\top})_{\mathcal{C}^k})=r>0$.  Then
\begin{eqnarray}
\left\|(((W^{\top})_{\mathcal{C}^k})^{\dagger})^{\top} (W\bu^k-\by^k)_{\mathcal{C}^k}\right\| &=&\left\|U_r\Sigma_r^{-1}V_r^{\top}(W\bu^k-\by^k)_{\mathcal{C}^k}\right\| 
\nonumber\\
&\le& \left\|\Sigma_r^{-1}\right\|_2\left \|V_r^{\top}(W\bu^k-\by^k)_{\mathcal{C}^k}\right\|\nonumber\\
&=&\left\|((W^{\top})_{\mathcal{C}^k})^{\dagger}\right\|_2 \left\|V_rV_r^{\top}(W\bu^k-\by^k)_{\mathcal{C}^k}\right\|\nonumber\\
&=&\left\|((W^{\top})_{\mathcal{C}^k})^{\dagger}\right\|_2 \left\|((W^{\top})_{\mathcal{C}^k})^{\dagger}(W^{\top})_{\mathcal{C}^k} (W\bu^k-\by^k)_{\mathcal{C}^k}\right\|\nonumber\\
&=&\left\|((W^{\top})_{\mathcal{C}^k})^{\dagger}\right\|_2 \left\|((W^{\top})_{\mathcal{C}^k})^{\dagger}W^{\top}(W\bu^k-\by^k)\right\|\label{gwwandw}.
\end{eqnarray}
where the last equality follow from \eqref{equ:yk}, i.e., for $i\in\ci^k$, $\by_i^{k}=(W\bu^{k})_i$. By Theorem \ref{solutionguy}, we have
\begin{eqnarray*}
&& \left\|\bu^{k+1}-\bu^k\right\|\\
&=&\left\|(A^{\top} A+(\beta+\rho) W^{\top}W+t_k I)^{-1}(A^{\top} \bu_0-W^{\top} \bz^k+\rho W^{\top} \by^k+t_k\bu^k)-\bu^k\right\| \\
&=&\left\|(A^{\top} A+(\beta+\rho) W^{\top}W+t_k I)^{-1}\left(A^{\top} \bu_0-W^{\top} \bz^k+\rho W^{\top} \by^k+t_k\bu^k-A^{\top}A \bu^k\right.\right.\\
&&\left.\left.-(\beta+\rho) W^{\top} W\bu^k-t_k\bu^k\right)\right\| \\
& =&\left\|(A^{\top} A+(\beta+\rho) W^{\top}W+t_k I)^{-1}\left(A^{\top} (\bu_0-A\bu^k)-W^{\top} \bz^k+\rho W^{\top} \by^k\right.\right.\\
&&\left.\left.-(\beta+\rho) W^{\top} W\bu^k\right)\right\|\\
& \geq& \frac{ \left\|A^{\top} (\bu_0-A\bu^k)-W^{\top} \bz^k+\rho W^{\top} \by^k-(\beta+\rho) W^{\top} W\bu^k\right\|}{\left\|A^{\top} A+(\beta +\rho)W^{\top} W+t_kI\right\|_2}\\
&=&\frac{1}{\left\|A^{\top} A+(\beta +\rho)W^{\top} W+t_kI\right\|_2}\left\| A^{\top} (\bu_0-A\bu^k)-\beta W^{\top}W\bu^k -\rho W^{\top}(W\bu^k-\by^k)\right.\\
&&\left.-(W^{\top})_{\mathcal{C}^k}((W^{\top})_{\mathcal{C}^k})^{\dagger} (-A^{\top}(A\bu^k-\bu_0)-\beta W^{\top}W\bu^k)\right\|\\
&=&\frac{1}{\left\|A^{\top} A+(\beta +\rho)W^{\top} W+t_kI\right\|_2}\left\| -\rho W^{\top}(W\bu^k-\by^k)\right.\\
&&\left.+[I-(W^{\top})_{\mathcal{C}^k}((W^{\top})_{\mathcal{C}^k})^{\dagger}] \big[-A^{\top}(A\bu^k-\bu_0)-\beta W^{\top}W\bu^k\big]\right\| \\
&\ge&\frac{1}{\left\|A^{\top} A+(\beta +\rho)W^{\top} W+t_kI\right\|_2\left\|((W^{\top})_{\mathcal{C}^k})^{\dagger}\right\|_2} \left\|((W^{\top})_{\mathcal{C}^k})^{\dagger}[I-(W^{\top})_{\mathcal{C}^k} ((W^{\top})_{\mathcal{C}^k})^{\dagger}]\right.\\
&&\left.\big[-A^{\top}(A\bu^k-\bu_0)-\beta W^{\top}W\bu^k\big]-\rho ((W^{\top})_{\mathcal{C}^k})^{\dagger}W^{\top}(  W\bu^k-\by^k)\right\|\\
&=& \frac{\rho}{\left\|A^{\top} A+(\beta +\rho)W^{\top} W+t_kI\right\|_2\left\|((W^{\top})_{\mathcal{C}^k})^{\dagger}\right\|_2}\left\| ((W^{\top})_{\mathcal{C}^k})^{\dagger}W^{\top}(  W\bu^k-\by^k)\right\|\\
&\ge& \frac{\rho}{\left\|A^{\top} A+(\beta +\rho)W^{\top} W+t_kI\right\|_2\left\|((W^{\top})_{\mathcal{C}^k})^{\dagger}\right\|_2^2} \left\|(((W^{\top})_{\mathcal{C}^k})^{\dagger})^{\top} (W\bu^k-\by^k)_{\mathcal{C}^k}\right\|
\end{eqnarray*}
where the last inequality follows from \eqref{gwwandw}.
\end{proof}

In what follows, we present the result that the sequence of function values corresponding to the sequence generated by Algorithm \ref{algor:clmem} is monotonically decreasing.

\begin{lemma}\label{gfunbound}
Let $\{(\bu^k,\by^k)\}$ be the sequence generated by Algorithm \ref{algor:clmem}. Let $\rho\ge \max\{t, 1\}$ and $t=\max_{k}\{t_k\}$,
\begin{equation}\label{equ:tk2}
\begin{aligned}
t_k\ge \max &\left\{ \max_{k}\left\{4 \left[\left\|((W^{\top})_{\mathcal{C}^k})^{\dagger}\right\| \left\|W\right\|\left\|A^{\top}A+\beta W^{\top}W\right\|\right. \right.\right. \\
&+\left\|W^{\top} W\right\| \left\|((W^{\top})_{\mathcal{C}^k})^{\dagger}\right\|^2\left\|A^{\top}A+\beta W^{\top}W\right\|  \\
&\left.\left.\left.+\left(\left\|A^{\top}A+\beta W^{\top}W\right\|+1\right)\left\|((W^{\top})_{\mathcal{C}^k})^{\dagger}\right\|^2 \left\|A^{\top}A+\beta W^{\top}W\right\|\right]\right\},~ c \right\},
\end{aligned}
\end{equation}
$c>0$, then
\[
h(\bu^k,\by^k)-h(\bu^{k+1},\by^{k+1})\ge \frac{c}{4}\left\|\bu^{k+1}-\bu^k\right\|^2.
\]
\end{lemma}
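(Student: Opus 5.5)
I plan to mirror the proof of Lemma \ref{hfunbound}, splitting
\[
h(\bu^k,\by^k)-h(\bu^{k+1},\by^{k+1})=\big[h(\bu^k,\by^k)-h(\bu^{k+1},\by^k)\big]+\big[h(\bu^{k+1},\by^k)-h(\bu^{k+1},\by^{k+1})\big].
\]
The second bracket is handled by Step 2 of Algorithm \ref{algor:clmem}: $\by^{k+1}$ globally minimizes $h(\bu^{k+1},\cdot)$, so this bracket is nonnegative. There is no proximal term in $\by$, which is why only $\|\bu^{k+1}-\bu^k\|^2$ appears in the conclusion. The whole task is therefore the $\bu$-step, where $\by=\by^k$ is fixed. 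With $\by^k$ fixed, $\cc^k$ and $\ci^k$ are fixed, and $\bz(\cdot,\by^k)$ is the affine map $\bu\mapsto P_k(-\nabla f(\bu)-\beta W^\top W\bu)$ on $\cc^k$ and zero on $\ci^k$, where $P_k:=((W^\top)_{\cc^k})^\dagger$.

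For the first bracket, I compare the optimal value of \eqref{cquesu} at $\bu^{k+1}$ with its value at $\bu=\bu^k$. This gives the analogue of \eqref{inequru}, with multiplier $\bz^k:=\bz(\bu^k,\by^k)$ and gain $\frac{t_k}{2}\|\bu^{k+1}-\bu^k\|^2$. It yields
\[
h(\bu^k,\by^k)-h(\bu^{k+1},\by^k)\ge \frac{t_k}{2}\|\bu^{k+1}-\bu^k\|^2+\big\langle W\bu^{k+1}-\by^k,\ \bz(\bu^k,\by^k)-\bz(\bu^{k+1},\by^k)\big\rangle .
\]
Only the $\cc^k$ block of the multiplier difference is nonzero, and it equals $P_k(A^\top A+\beta W^\top W)(\bu^{k+1}-\bu^k)$. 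So the cross term is $\langle (W\bu^{k+1}-\by^k)_{\cc^k}, P_k(A^\top A+\beta W^\top W)(\bu^{k+1}-\bu^k)\rangle$. I will write $(W\bu^{k+1}-\by^k)_{\cc^k}=(W(\bu^{k+1}-\bu^k))_{\cc^k}+(W\bu^k-\by^k)_{\cc^k}$.

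The first piece is bounded in absolute value by $\|P_k\|\|W\|\|A^\top A+\beta W^\top W\|\|\bu^{k+1}-\bu^k\|^2$, which matches the first summand in \eqref{equ:tk2}. For the second piece, I move $P_k^\top$ onto the residual: $\langle r, P_k v\rangle=\langle P_k^\top r, v\rangle$ with $r=(W\bu^k-\by^k)_{\cc^k}$. This gives the bound $\|P_k^\top r\|\,\|A^\top A+\beta W^\top W\|\,\|\bu^{k+1}-\bu^k\|$. Lemma \ref{lemmaguk+1uk} then controls $\|P_k^\top r\|$ by $\|A^\top A+(\beta+\rho)W^\top W+t_kI\|\|P_k\|^2\rho^{-1}\|\bu^{k+1}-\bu^k\|$. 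This requires $\rank((W^\top)_{\cc^k})>0$; if instead $\cc^k$ is empty or the rank is zero, then $P_k=0$ and the cross term vanishes.

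Next I split $\|A^\top A+(\beta+\rho)W^\top W+t_kI\|\le\|A^\top A+\beta W^\top W\|+t_k+\rho\|W^\top W\|$ and use $\rho\ge\max\{t,1\}$, so that $t_k/\rho\le1$ and $1/\rho\le1$. The resulting coefficient is at most the sum of the last two summands in \eqref{equ:tk2}. The three cross-term coefficients together are then at most $t_k/4$, which leaves $\frac{t_k}{4}\|\bu^{k+1}-\bu^k\|^2\ge\frac c4\|\bu^{k+1}-\bu^k\|^2$.

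The main obstacle is the $\bu$-step comparison. One has to check carefully that the value of \eqref{cquesu} at $\bu^k$ and at $\bu^{k+1}$ relates to $h(\cdot,\by^k)$ with the correct multiplier, because $h$ uses $\bz(\bu^{k+1},\by^k)$ and not the frozen $\bz^k$. The constant bookkeeping must also match \eqref{equ:tk2}; the maximum over $k$ there ensures uniformity over the finitely many possible sets $\cc^k$.
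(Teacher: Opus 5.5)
Your proposal is correct and follows the paper's proof essentially step for step. It uses the same split into a $\bu$-step bracket (compared through the optimality of \eqref{cquesu}) and a $\by$-step bracket (nonnegative by \eqref{cquesy}), and the same cross term $\langle (W\bu^{k+1}-\by^k)_{\cc^k}, ((W^{\top})_{\cc^k})^{\dagger}(A^{\top}A+\beta W^{\top}W)(\bu^{k+1}-\bu^k)\rangle$, decomposed via $W(\bu^{k+1}-\bu^k)$ and $W\bu^k-\by^k$ and bounded with Lemma \ref{lemmaguk+1uk} and $\rho\ge\max\{t,1\}$ to match \eqref{equ:tk2}, with the rank-zero case handled separately as the paper does.
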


\begin{proof}
By \eqref{cquesu} and \eqref{cquesy} we have
\begin{eqnarray}
f(\bu^k)+\frac{\beta}{2}\left\|W\bu^k\right\|^2&&+\left\langle W\bu^k-\by^k,\bz^k\right\rangle+\frac{\rho}{2}\left\|W\bu^k-\by^k\right\|^2\ge  f(\bu^{k+1})+\frac{\beta}{2}\left\|W\bu^{k+1}\right\|^2\nonumber\\
&&+\left\langle W\bu^{k+1}-\by^k,\bz^{k}\right\rangle+\frac{\rho}{2} \left\|W\bu^{k+1}-\by^k\right\|^2 +\frac{t_k}{2}\left\|\bu^{k+1}-\bu^k\right\|^2\label{inequgu}
\end{eqnarray}
and
\begin{eqnarray}
h(\bu^{k+1},\by^k)\ge h(\bu^{k+1},\by^{k+1})\label{inequgy}.
\end{eqnarray}
(i) If $\rank((W^{\top})_{\mathcal{C}^k})>0$, using \eqref{inequgu} and \eqref{inequgy} we have
\begin{eqnarray*}
&& h(\bu^k,\by^k)-h(\bu^{k+1},\by^{k+1})\\
&=& h(\bu^k,\by^k)-h(\bu^{k+1},\by^k)+h(\bu^{k+1},\by^k)-h(\bu^{k+1},\by^{k+1})\\
&=& f(\bu^k)+\lambda\left\|\by^k\right\|_0+\frac{\beta}{2}\left\|W\bu^k\right\|^2 +\left\langle W\bu^k-\by^k,\bz(\bu^{k},\by^k)\right\rangle +\frac{\rho}{2}\left\|W\bu^k-\by^k\right\|^2\\
&&- f(\bu^{k+1})-\lambda\left\|\by^k\right\|_0 -\frac{\beta}{2}\left\|W\bu^{k+1}\right\|^2-\left\langle W\bu^{k+1}-\by^k,\bz(\bu^{k+1},\by^k)\right\rangle \\
&&-\frac{\rho}{2}\left\|W\bu^{k+1}-\by^k\right\|^2+h(\bu^{k+1},\by^k)- h(\bu^{k+1},\by^{k+1})\\
&\ge&\frac{t_k}{2}\left\|\bu^{k+1}-\bu^k\right\|^2+\left\langle W\bu^{k+1}-\by^k,\bz(\bu^k,\by^k)\right\rangle- \left\langle W\bu^{k+1}-\by^k,\bz(\bu^{k+1},\by^k)\right\rangle\\
&=&\frac{t_k}{2}\left\|\bu^{k+1}-\bu^k\right\|^2+\left\langle W\bu^{k+1}-\by^k, \bz(\bu^k,\by^k)-\bz(\bu^{k+1},\by^k)\right\rangle\\
&=&\frac{t_k}{2}\left\|\bu^{k+1}-\bu^k\right\|^2-\left\langle (W\bu^{k+1}-\by^k)_{\mathcal{C}^k},((W^{\top})_{\mathcal{C}^k})^{\dagger}(A^{\top}A+\beta W^{\top}W)(\bu^k-\bu^{k+1})\right\rangle\\
&=&\frac{t_k}{2}\left\|\bu^{k+1}-\bu^k\right\|^2 \\
&&-\left\langle(((W^{\top})_{\mathcal{C}^k})^{\dagger})^{\top} (W\bu^{k+1}-\by^k)_{\mathcal{C}^k},(A^{\top}A+\beta W^{\top}W)(\bu^k-\bu^{k+1})\right\rangle\\
&=&\frac{t_k}{2}\left\|\bu^{k+1}-\bu^k\right\|^2 \\
&&-\left\langle(((W^{\top})_{\mathcal{C}^k})^{\dagger})^{\top} (W\bu^{k+1}-W\bu^k)_{\mathcal{C}^k},(A^{\top}A+\beta W^{\top}W)(\bu^k-\bu^{k+1})\right\rangle\\
&&-\left\langle (((W^{\top})_{\mathcal{C}^k})^{\dagger})^{\top} (W\bu^{k}-\by^k)_{\mathcal{C}^k},(A^{\top}A+\beta W^{\top}W)(\bu^k-\bu^{k+1})\right\rangle\\
& \ge&\frac{t_k}{2}\left\|\bu^{k+1}-\bu^k\right\|^2 -\left\|((W^{\top})_{\mathcal{C}^k})^{\dagger}\right\|_2\left\|W\right\|_2 \left\|A^{\top}A+\beta W^{\top}W\right\|_2\left\|\bu^k-\bu^{k+1}\right\|_2^2\\
&&-\left\| (((W^{\top})_{\mathcal{C}^k})^{\dagger})^{\top} (W\bu^{k}-\by^k)_{\mathcal{C}^k}\right\|_2\left\|A^{\top}A+\beta W^{\top}W\right\|_2\left\|\bu^k-\bu^{k+1}\right\|_2\\
& \overset{(*)}{\ge}&\frac{t_k}{2}\left\|\bu^{k+1}-\bu^k\right\|^2 -\left\|((W^{\top})_{\mathcal{C}^k})^{\dagger}\right\|_2\left\|W\right\|_2 \left\|A^{\top}A+\beta W^{\top}W\right\|_2\left\|\bu^k-\bu^{k+1}\right\|_2^2\\
&& -\frac{\left\|A^{\top} A+(\beta +\rho)W^{\top} W+t_kI\right\|_2 \left\|((W^{\top})_{\mathcal{C}^k})^{\dagger}\right\|_2^2}{\rho} \left\|A^{\top}A+\beta W^{\top}W\right\|_2 \left\|\bu^k-\bu^{k+1}\right\|^2\\%
& \ge&\frac{t_k}{2}\left\|\bu^{k+1}-\bu^k\right\|^2- \left\|((W^{\top})_{\mathcal{C}^k})^{\dagger}\right\|_2\left\|W\right\|_2 \left\|A^{\top}A+\beta W^{\top}W\right\|_2\left\|\bu^k-\bu^{k+1}\right\|^2\\
&& -\frac{\left\|A^{\top} A+\beta W^{\top} W+t_kI\right\|_2\left\|((W^{\top})_{\mathcal{C}^k})^{\dagger}\right\|_2^2}{\rho} \left\|A^{\top}A+\beta W^{\top}W\right\|_2 \left\|\bu^k-\bu^{k+1}\right\|^2\\
&&-\left\|W^{\top} W\right\|_2\left\|((W^{\top})_{\mathcal{C}^k})^{\dagger}\right\|_2^2 \left\|A^{\top}A+\beta W^{\top}W\right\|_2\left\|\bu^k-\bu^{k+1}\right\|^2\\
&\ge&\frac{t_k}{4}\left\|\bu^{k+1}-\bu^k\right\|^2.
\end{eqnarray*}
where the inequality $(*)$ follows by Lemma \ref{lemmaguk+1uk}, and the last inequality follows from \eqref{equ:tk2}.

(ii) If $\rank((W^{\top})_{\mathcal{C}^k})=0$, then $\bz(\cdot,\by^k)={\bf 0}$, using \eqref{inequgu} and \eqref{inequgy} we have
\begin{eqnarray*}
&& h(\bu^k,\by^k)-h(\bu^{k+1},\by^{k+1})\\
&=& h(\bu^k,\by^k)-h(\bu^{k+1},\by^k)+h(\bu^{k+1},\by^k)-h(\bu^{k+1},\by^{k+1})\\
&=& f(\bu^k)+\lambda\left\|\by^k\right\|_0+\frac{\beta}{2}\left\|W\bu^k\right\|^2 +\frac{\rho}{2}\left\|W\bu^k-\by^k\right\|^2- f(\bu^{k+1})-\lambda\left\|\by^k\right\|_0 \\
&&-\frac{\beta}{2}\left\|W\bu^{k+1}\right\|^2-\frac{\rho}{2}\left\|W\bu^{k+1}-\by^k\right\|^2+h(\bu^{k+1},\by^k)- h(\bu^{k+1},\by^{k+1})\\
&\ge&\frac{t_k}{2}\left\|\bu^{k+1}-\bu^k\right\|^2.
\end{eqnarray*}

\end{proof}

Next, we give the main result of this subsection.
\begin{theorem}
Under the same assumptions as in Theorem \ref{solutionguy} and Lemma \ref{gfunbound}, let $\{\bu^k,\by^k\}$ be a bounded sequence generated by Algorithm \ref{algor:clmem}. Then, the accumulation of $\{\bu^k,\by^k\}$ is critical point of $h$.
\end{theorem}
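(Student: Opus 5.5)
Using Lemma \ref{gfunbound}, I would first show that $\|\bu^{k+1}-\bu^k\|\to0$. By that lemma, $h(\bu^k,\by^k)$ is nonincreasing. It is also bounded below along the bounded sequence, because $\bz(\bu,\by)$ is bounded by $\sqrt{m}M$ and the remaining terms are continuous on bounded sets. Telescoping gives $\sum_k\|\bu^{k+1}-\bu^k\|^2<\infty$.

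Next I would fix an accumulation point $(\bu^*,\by^*)$ with $(\bu^{k_t},\by^{k_t})\to(\bu^*,\by^*)$, and use Theorem \ref{solutionguy}(ii) to control the support of $\by$. For large $k$, each component of $|W\bu^{k}|$ lies either above the upper threshold or below the lower threshold. In the first case $y_i^{k}=(W\bu^{k})_i$; in the second $y_i^{k}=0$. Since the two thresholds are separated by a gap of fixed positive length, and $\|W\bu^{k+1}-W\bu^k\|\to0$, the index sets $\cc^k,\ci^k$ must eventually be constant, say equal to $\cc^*,\ci^*$. Passing to the limit along the subsequence then gives three facts: $y^*_i=(W\bu^*)_i\neq0$ on $\ci^*$; $y^*_i=0$ on $\cc^*$; and $|(W\bu^*)_i|$ stays below the lower threshold on $\cc^*$. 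Because the support is frozen, $\bz(\bu^{k_t},\by^{k_t})\to\bz(\bu^*,\by^*)$ by continuity of $\nabla f$ (the pseudoinverse now acts on a fixed submatrix). Also $\by^{k_t}\to\by^*$ with $\|\by^{k_t}\|_0=\|\by^*\|_0$ eventually, which gives continuity of the $h$ values along the subsequence.

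For criticality I would use Lemma \ref{Fder} restricted to a fixed support. On the frozen support, $h(\cdot,\cdot)$ near $(\bu^*,\by^*)$ coincides with a smooth function plus the constant $\lambda|\ci^*|$, at least for $\by$ with the same support; off the support, the $\ell_0$ subdifferential gives all of $\R$ in the $\cc^*$ coordinates.
\begin{itemize}
\item \emph{$\by$-part, coordinates in $\cc^*$.} Criticality there is automatic, since these coordinates may be absorbed by $\partial\lambda\|\cdot\|_0$.
\item \emph{$\by$-part, coordinates in $\ci^*$.} There $\bz_i=0$, so the partial gradient is $\rho(y_i-(W\bu)_i)=0$.
\item \emph{$\bu$-part.} I would pass to the limit in the optimality condition of \eqref{cquesu}:
\[
\nabla f(\bu^{k+1})+W^\top\bz(\bu^k,\by^k)+\beta W^\top W\bu^{k+1}+t_k(\bu^{k+1}-\bu^k)+\rho W^\top(W\bu^{k+1}-\by^k)=\mathbf{0}.
\]
In the limit this gives $\nabla f(\bu^*)+W^\top\bz^*+\beta W^\top W\bu^*+\rho W^\top(W\bu^*-\by^*)=\mathbf{0}$. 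To match $\nabla_\bu h$, which also contains the derivative of $\langle W\bu-\by,\bz(\bu,\by)\rangle$ in $\bu$, I would note that this extra term equals $-(A^\top A+\beta W^\top W)((W^\top)_{\cc^*})^{\dagger\top}(W\bu^*-\by^*)_{\cc^*}$. I would then show that it is compensated, in analogy with Lemma \ref{hgribound}, using Lemma \ref{lemmaguk+1uk}. That lemma bounds $\|((W^\top)_{\cc^k})^{\dagger\top}(W\bu^k-\by^k)_{\cc^k}\|$ by a multiple of $\|\bu^{k+1}-\bu^k\|\to0$, so the correction term vanishes in the limit and $\nabla_\bu h(\bu^*,\by^*)=\mathbf{0}$.
\end{itemize}

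The main obstacle is the stabilization of the support $\cc^k$. The argument needs $W\bu^{k+1}-W\bu^k\to0$ for the full sequence, not just along the subsequence, together with the no-gap assumption, so that components cannot jump across the forbidden interval; I would handle this by contradiction using the fixed positive gap length. Once the support is frozen, $\bz(\cdot,\by)$ is continuous and the remaining limit arguments are routine.
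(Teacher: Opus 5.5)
Your argument is correct. Its skeleton is the paper's: the two thresholds of Theorem \ref{solutionguy} freeze the support of $\by^{k_t}$, and you pass to the limit in the optimality condition of \eqref{cquesu}. The closing step is different.

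The paper never computes $\nabla_{\bu}h$. It writes the limiting equation as $(I-P)(\nabla f(\bu^*)+\beta W^{\top}W\bu^*)+\rho (W^{\top})_{\cc^*}(W\bu^*-\by^*)_{\cc^*}=\mathbf{0}$, where $P$ is the orthogonal projector onto the range of $(W^{\top})_{\cc^*}$. Since the two summands are orthogonal, it gets $(W^{\top})_{\cc^*}(W\bu^*)_{\cc^*}=\mathbf{0}$, hence $W\bu^*=\by^*$. It then concludes that $(\bu^*,\by^*)$ is a KKT point of \eqref{equ:reformu} and leaves the passage to criticality of $h$ implicit (cf.\ the remark after Theorem \ref{finalres1}).

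You instead verify stationarity of $h$ on the frozen support, and remove the derivative of the exact-multiplier term with Lemma \ref{lemmaguk+1uk}, as in Lemma \ref{hgribound}. This is the same projection identity, applied iterate by iterate rather than at the limit. It needs a uniform constant, which you have because $\cc^{k_t}=\cc^*$ and $t_k\le t$; if $(W^{\top})_{\cc^*}=0$ the term vanishes trivially. Your route proves the statement as literally phrased; the paper's route delivers feasibility and the full KKT system.

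You should make one point explicit. Lemma \ref{Fder} concerns the $h$ of \eqref{hfullrank}. Here $\bz(\bu,\by)$ jumps when the support of $\by$ grows, so letting $\partial\lambda\|\cdot\|_0$ absorb the $\cc^*$-coordinates requires that $h$ not drop at nearby points whose support strictly contains $\ci^*$. Your own limit supplies this:
\begin{itemize}
\item $((W^{\top})_{\cc^*})^{\dagger\top}(W\bu^*)_{\cc^*}=\mathbf{0}$ forces $(W\bu^*)_{\cc^*}=\mathbf{0}$. Indeed, $(W\bu^*)_{\cc^*}$ lies in the range of $((W^{\top})_{\cc^*})^{\top}$, on which $(((W^{\top})_{\cc^*})^{\top})^{\dagger}$ is injective.
\item Consequently $\langle W\bu-\by,\bz(\bu,\by)\rangle\to0$ as $(\bu,\by)\to(\bu^*,\by^*)$, whatever the support. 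So enlarging the support raises $h$ by at least $\lambda/2$ nearby.
\item On the frozen support, $h$ is smooth with zero gradient in $(\bu,\by_{\ci^*})$.
\end{itemize}
Hence $\mathbf{0}\in\widehat{\partial} h(\bu^*,\by^*)\subseteq\partial h(\bu^*,\by^*)$, and you recover $W\bu^*=\by^*$ as a by-product.

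Finally, stabilizing $\cc^k$ along the whole sequence is more than you need. Nonzero entries of $\by^{k_t}$ have modulus at least the upper threshold, so the support already stabilizes along $k_t$, and that is all your limits use.
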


\begin{proof}
Since the sequence $\{\bu^k,\by^k\}$ is bounded, let $(\bu^*,\by^*)$ be an accumulation point thereof. Then there exists a subsequence
$\{\bu^{k_t},\by^{k_t}\}$ such that $\lim_{t\to\infty} (\bu^{k_t},\by^{k_t})=(\bu^*,\by^*)$. By $\by^{k_t}\to \by^*$, when $t$ is sufficiently large, we have $\supp(\by^*)\subset \supp(\by^{k_t})$. We prove the conclusions by two cases.

(a) There exists $k_j$, When $t\ge j$, $\supp(\by^{k_t})=\supp(\by^*)$. Let $\supp(\by^*)=\ci^*$ and $\mathcal{C}^*=\{i|y_i^*=0\}$.

(i) First, we consider the case $\mathcal{C}^{k_t}=\cc^*\neq \varnothing$ when $t\ge j$.

By \eqref{cquesu} and \eqref{cquesy}, taking $k=k_t$ and $k=k_t-1$ respectively, we have
\begin{eqnarray}\label{gfinalfuzhu1}
&&A^{\top}(A\bu^{k_t+1}-\bu_0)+W^{\top}\bz(\bu^{k_t},\by^{k_t})+\beta W^{\top}W\bu^{k_t+1}+t_k(\bu^{k_t+1}-\bu^{k_t})\nonumber\\
&&+\rho W^{\top}(W\bu^{k_t+1}-\by^{k_t})={\bf 0}
\end{eqnarray}
and
\begin{eqnarray}\label{gfinalfuzhu}
\eta^{k_t}+\rho(\by^{k_t}-W\bu^{k_t})-\bz(\bu^{k_t},\by^{k_t})={\bf 0},
\end{eqnarray}
where $\eta^{k_t}\in \partial \lambda\|\by^{k_t}\|_0$.

When $i\in \ci^*$, by $\supp(\by^{k_t})=\supp(\by^*)=\ci^*$ and \eqref{gfinalfuzhu}, we obtain $0+\rho(\by^{k_t}-W\bu^{k_t})_i=0$. Let $t\to \infty$, we have $\rho(\by^*-W\bu^*)_{\mathcal{I}^*}={\bf 0}$.

Furthermore, when $t>j$, $\mathcal{C}^{k_t}=\mathcal{C}^{*}$, then $\lim_{t\to \infty}\bz(\bu^{k_t}, \by^{k_t})_{\mathcal{C}^{k_t}}=\lim_{t\to \infty}\bz(\bu^{k_t},$ $\by^{k_t})_{\mathcal{C}^{*}}=\lim_{t\to \infty}((W^{\top})_{\mathcal{C}^*})^{\dagger}(-A^{\top}(A\bu^{k_t}-\bu_0)-\beta W^{\top}W\bu^{k_t})=\bz(\bu^*,\by^*)_{\cc^*}$, $\bz(\bu^*,\by^*)_{\mathcal{I}^*}$ $={\bf 0}$. 
By Theorem \ref{solutionguy}, we have if $\by_i^{k_t}\neq 0$, $\by_i^{k_t}=(W\bu^{k_t})_i$.  Since Lemma \ref{gfunbound} and $\{(\bu^k,\by^k)\}$ is bounded, we have $\lim_{k\to\infty}\|\bu^{k+1}-\bu^k\|=0$.  Therefore, let $t\to \infty$,  by \eqref{gfinalfuzhu1} we have
\begin{align}\label{gfinalfuzhu3}
A^{\top}(A\bu^*-\bu_0)&+(W^{\top})_{\mathcal{C}^*}((W^{\top})_{\mathcal{C}^*})^{\dagger}(-A^{\top}(A\bu^*-\bu_0)-\beta W^{\top}W\bu^*)
 +\beta W^{\top}W\bu^*\nonumber\\
 & +\rho (W^{\top})_{\mathcal{C}^*} (W\bu^*-\by^*)_{\mathcal{C}^*}={\bf 0}
\end{align}
Splitting both the first and third terms of the above equation into parts, we have
\begin{eqnarray*}
&&(W^{\top})_{\mathcal{C}^*}((W^{\top})_{\mathcal{C}^*})^{\dagger} \left[A^{\top}(A\bu^*-\bu_0)+\beta W^{\top}W\bu^*\right]\\
&&+\left[I-(W^{\top})_{\mathcal{C}^*}((W^{\top})_{\mathcal{C}^*})^{\dagger}\right] \left[A^{\top}(A\bu^*-\bu_0)+\beta W^{\top}W\bu^*\right]\\
&&+(W^{\top})_{\mathcal{C}^*}((W^{\top})_{\mathcal{C}^*})^{\dagger} \left[-A^{\top}(A\bu^*-\bu_0)-\beta W^{\top}W\bu^*\right]+\rho (W^{\top})_{\mathcal{C}^*} (W\bu^*-\by^*)_{\mathcal{C}^*}={\bf 0}.
\end{eqnarray*}
Furthermore, 
\begin{eqnarray*}
\left[I-(W^{\top})_{\mathcal{C}^*}((W^{\top})_{\mathcal{C}^*})^{\dagger}\right] \left[A^{\top}(A\bu^*-\bu_0)+\beta W^{\top}W\bu^*\right]+\rho (W^{\top})_{\mathcal{C}^*} (W\bu^*-\by^*)_{\mathcal{C}^*}={\bf 0}.
\end{eqnarray*}
Since $\Range(I-(W^{\top})_{\cc^*}((W^{\top})_{\cc^*})^{\dagger}) \cap\Range((W^{\top})_{\cc^*})=\{\mathbf{0}\}$, we have 
\[
(I-(W^{\top})_{\mathcal{C}^*}((W^{\top})_{\mathcal{C}^*})^{\dagger})(A^{\top}(A\bu^*-\bu_0)+\beta W^{\top}W\bu^*)={\bf 0}
\]
and 
\BE\label{equ:last}
(W^{\top})_{\mathcal{C}^*} (W\bu^*-\by^*)_{\mathcal{C}^*}={\bf 0}.
\EE
Substituting $(W^{\top})_{\mathcal{C}^*} (W\bu^*-\by^*)_{\mathcal{C}^*}={\bf 0}$ into \eqref{gfinalfuzhu3}, we have
\BE\label{equ:kkt4}
A^{\top}(A\bu^*-\bu_0)+W^{\top}\bz^* +\beta W^{\top}W\bu^*={\bf 0}.
\EE
Moreover, by \eqref{equ:last}, we have
\begin{eqnarray*}
&& (W^{\top})_{\mathcal{C}^*} (W\bu^*)_{\cc^*}={\bf 0}
\Leftrightarrow  W^{\top}I_{\cc^*}I_{\cc^*}^{\top}(W\bu^*)={\bf 0}
\Leftrightarrow  (W^{\top}I_{\cc^*})(W^{\top}I_{\cc^*})^{\top}\bu^*={\bf 0}\\
&&\Leftrightarrow (W^{\top}I_{\cc^*})^{\top}\bu^*={\bf 0}
\Leftrightarrow I_{\cc^*}^{\top}W\bu^*={\bf 0}
\Leftrightarrow  I_{\cc^*}^{\top}(W\bu^*-\by^*)={\bf 0}
\Leftrightarrow (W\bu^*-\by^*)_{\cc^*}={\bf 0}.
\end{eqnarray*}
Therefore, 
\BE\label{equ:kkt5}
W\bu^*-\by^*={\bf 0}.
\EE
Invoking this result in \eqref{gfinalfuzhu} yields: when $i\in\cc^*$, there $\eta^*\in\lambda\|y_i^*\|_0$ such that
\BE\label{equ:kkt6}
\eta^*-\bz^*={\bf 0}.
\EE
Combining equations \eqref{equ:kkt4}--\eqref{equ:kkt6}, it follows that $(\bu^*,\by^*)$ is a KKT point of \eqref{equ:reformu}.

(ii)when $t>j$, ${\mathcal{C}^{k_t}}=\cc^*=\varnothing$, then $\bz^{k_t}={\bf 0}$ for all $t>j$.
By \eqref{cquesu} and \eqref{cquesy}, taking $k=k_t$ and $k=k_t-1$ respectively, we also have
\begin{equation}\label{gfinalfuzhu1empty}
A^{\top}(A\bu^{k_t+1}-\bu_0)+\beta W^{\top}W\bu^{k_t+1}+t_k(\bu^{k_t+1}-\bu^{k_t})+\rho W^{\top}(W\bu^{k_t+1}-\by^{k_t})={\bf 0}
\end{equation}
and
\begin{equation}\label{gfinalfuzhuempty}
\eta^{k_t}+\rho(\by^{k_t}-W\bu^{k_t})={\bf 0},
\end{equation}
where $\eta^{k_t}\in \partial \lambda\|\by^{k_t}\|_0$.
By ${\mathcal{C}^{k_t}}=\cc^*=\varnothing$, we obtain $\eta^{k_t}={\bf 0}$ and $\by^{k_t}-W\bu^{k_t}=0$. Using the boundedness of sequence $\{(\bu^k,\by^k)\}$ together with Lemma \ref{gfunbound}, let $t\to \infty$ in \eqref{gfinalfuzhu1empty} and \eqref{gfinalfuzhuempty}, we have
\[
A^{\top}(A\bu^*-\bu_0)+W^{\top}\bz^*+\beta W^{\top}W\bu^*={\bf 0},\quad W\bu^*-\by^*={\bf 0} \quad \mbox{and} \quad \eta^*-\bz^*={\bf 0}.
\]
where $\eta^*={\bf 0} \in \partial \lambda\|\by^{k_t}\|_0$. This proves that $(\bu^*, \by^*)$ is a KKT point of Problem \eqref{equ:reformu}.

(b) $\supp(\by^*)\subset \supp(\by^{k_t})$ and $\supp(\by^*)\neq\supp(\by^{k_t})$ for sufficiently large $t$.

By Theorem  \ref{solutionguy}, for all $k\ge 0$ and $i\in [m]$, we have $y_i^{k+1}=(W\bu^{k+1})_i$ if $|(W\bu^{k+1})_i|$ exceeds a fixed positive constant, and $y_i^{k+1}=0$ otherwise. This indicates that the sequence formed by the components of $\{\by^{k_t}\}$ can consist of entirely nonzero entries for all sufficiently large t while converging to zero. Equivalently, for $\by$ updated via \eqref{equ:yk}, the support identity $\supp(\by^*)=\supp(\by^{k_t})$ must hold. 

Therefore, the second scenario cannot occur.
\end{proof}

\section{Numerical experiments}\label{sec:4}
In this section, we report some numerical experiments on the  performance of  ALMEMfrr (Algorithm \ref{algor:rlmem}) and  ALMEMgc (Algorithm \ref{algor:clmem}) for solving problem (1.1).
To illustrate the effectiveness of our method, we compare the proposed algorithm with the inexact augmented Lagrangian method (IALM) in \cite{SL20}. Our numerical tests were implemented by running {\tt MATLAB R2024b} on a personal desktop with the processor Intel Core i9-14900K @ 3.2GHz and 192 GB RAM.

For comparison purposes, let
\begin{eqnarray*}
({\tt kktL(\bu^k,\by^k)})^2&=&\|A^{\top}(A\bu^k-\bu_0)+W^{\top}\bz^k+\beta W^{\top}W\bu^k
+\rho W^{\top}(W\bu^k-\by)\|^2\\
&&+\|(\bz^{k}+\rho(W\bu^k-\by^k))_{{\mathcal{I}}^k}\|^2,\\
({\tt kkt(\bu^k,\by^k)})^2&=&\|\bz_{{\mathcal{I}}^k}^k\|^2+\|A^{\top}(A\bu^k-\bu_0)+W^{\top}\bz^k+\beta W^{\top}W\bu^k)\|^2,\\
{\tt obj(\bu^k,\by^k)}&=&f(\bu^k)+\lambda\|\by^k\|_0 +\frac{\beta}{2}\|W\bu^k\|^2.
\end{eqnarray*}
In our numerical tests, the stopping criterion is set to be ${\tt kktL}(\bu^k,\by^k) \le {\tt tol}$
and the largest number of iterations  is set to be {\tt ITmax}, where `{\tt tol}' is a prescribed tolerance. Let  `{\tt ct.}' and `{\tt rce.}'  denote the total computing time in seconds and the relative constraint error $\|W\bu^k-\by^k\|/\|W\|$ at the final iterates of the corresponding algorithms, respectively. We choose $t_k$ using the Barzilai-Borwein (BB) strategy \cite{BB88}.

\subsection{Synthetic data}
In this subsection, owing to the simplicity and efficiency of the alternating minimization method and the close relation between
\BE\label{equ:alter-min}
\min_{\bu, \by} f(\bu)+\lambda\|\by\|_0 +\frac{\beta}{2}\|W\bu\|^2+\frac{\rho}{2}\|W\bu-\by\|^2
\EE
and problem \eqref{intro}, we employ the alternating minimization method to solve \eqref{equ:alter-min} and use the obtained solution as the initial point. All algorithms are stopped if the running time exceeds 7200 seconds or the number of iterations reaches {\tt ITmax}.

We first examine the performance of the proposed algorithms when $W$ is of full rank.

\begin{example}\label{synfullrank}
We consider problem \eqref{intro} with $m=800$, $n=1000$.
The matrix $W = [\hat{W}, \tilde{W}]\in\mathbb{R}^{m\times n}$ is randomly generated via the {\tt randn} function, where $\hat{W}\in\mathbb{R}^{m\times m}$ and $\tilde{W}\in\mathbb{R}^{m\times (n-m)}$. We also set matrix $A$ as the identity matrix. The vector $\mathbf{y}\in\mathbb{R}^{m}$ is sparse random with $\|\mathbf{y}\|_0 = 80$.
To construct the $\mathbf{u}_0\in\mathbb{R}^{n}$, we first randomly generate $\tilde{\mathbf{u}}_0\in\mathbb{R}^{n-m}$, and set $\bar{\mathbf{u}}_0 = \hat{W}^{\dagger}(\mathbf{y} - \tilde{W} \tilde{\mathbf{u}}_0)$. The vector $\bu_0$ is then constructed as $\mathbf{u}_0 = [\bar{\mathbf{u}}_0^{\top}, \tilde{\mathbf{u}}_0^{\top}]^{\top} + 0.001\cdot{\tt randn}(n,1)$ by adding a small Gaussian perturbation.
Finally, we set the hyperparameters as $\beta = \lambda = 0.1$.
\end{example}

\begin{figure}[!ht]
	\centering
	\includegraphics[width=0.4\textwidth,height=0.18\textheight]{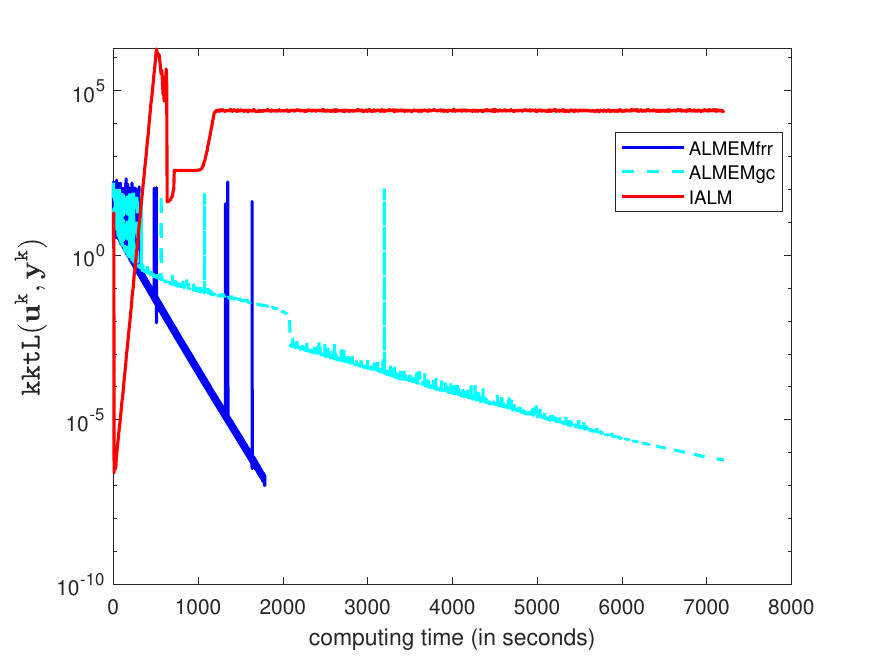}
	\includegraphics[width=0.4\textwidth,height=0.18\textheight]{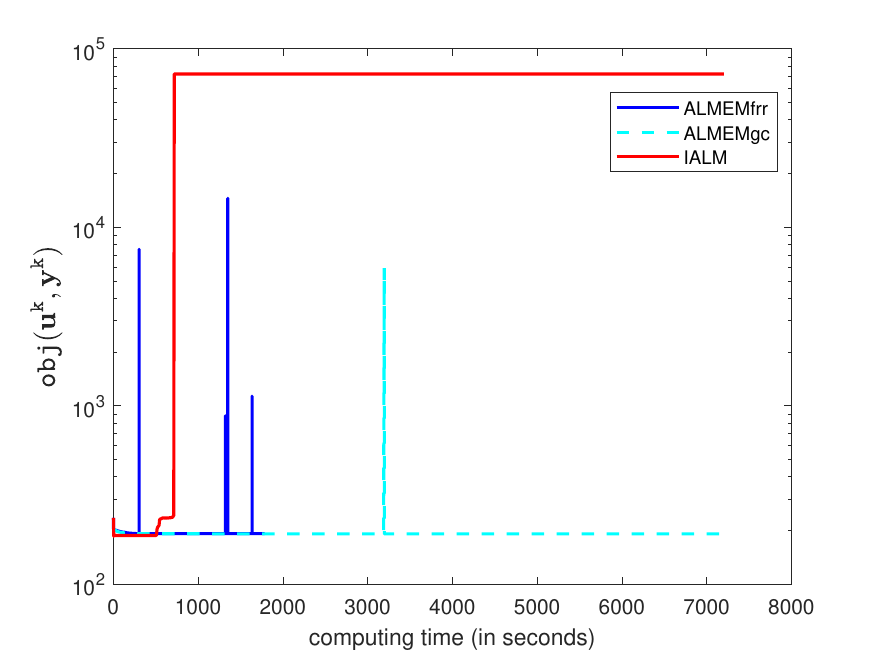}
	\includegraphics[width=0.4\textwidth,height=0.18\textheight]{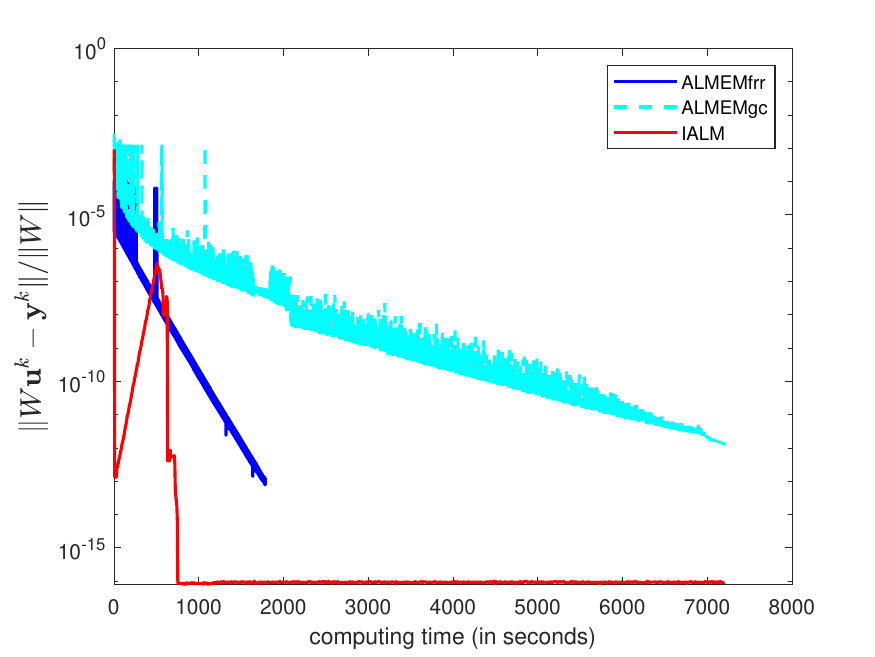}
	\caption{Convergence and computing time curves for Example  \ref{synfullrank}.}\label{figuresynrowfull}
\end{figure}

\begin{table}[!ht]\renewcommand{\arraystretch}{1.0} \addtolength{\tabcolsep}{1.0pt}
	\begin{center}{\footnotesize
			\begin{tabular}[c]{|c|c|c|c|c|c|}     \hline
				Alg. & ${\tt obj(\bu^k,\by^k)}$ &  {\tt ct.}   & {\tt rce.} &${\tt kktL(\bu^k,\by^k)}$&${\tt kkt(\bu^k,\by^k)}$ \\ \hline
				
				ALMEMfrr                    & $1.9238\times 10^2$  &$1.7837\times 10^3$  &$7.8327\times 10^{-14}$&$9.5777\times 10^{-8}$ &$1.4116\times 10^{-8}$ \\ \hline
				ALMEMgc                     & $1.9198\times 10^2$  &$7.2000\times 10^3$   &$1.4094\times 10^{-12}$ &$6.0541\times 10^{-7}$&$6.0190\times 10^{-7}$ \\ \hline
				{IALM}                           &$7.2419\times 10^4$    & $7.2002\times10^3$    &$9.3461\times 10^{-17}$ &$2.4961\times 10^{4}$&$1.1581\times 10^4$\\ \hline
		\end{tabular}}
	\end{center}
	\caption{Numerical results for Example \ref{synfullrank}.} \label{Table:synfullrank}
\end{table}

Figure \ref{figuresynrowfull} and Table \ref{Table:synfullrank} display the numerical results of all comparative algorithms for Example \ref{synfullrank}, where we set $\rho=100$ for ALMEMfrr, $\rho=50$ for ALMEMgc, with a stopping tolerance ${\tt tol} = 10^{-7}$ and maximum iteration number ${\tt ITmax} = 10^6$. It can be clearly observed from the presented results that both ALMEMfrr and ALMEMgc achieve stable and successful convergece. In particular, ALMEMfrr converges accurately under strict tolerance conditions and consumes significantly less computational time than ALMEMgc, demonstrating its higher computational efficiency. In contrast, the IALM algorithm exhibits unstable iterative performance. Although the ${\tt kktL(\bu^k,\by^k)}$ value decreases rapidly at the initial stage, it increases sharply in subsequent iterations and finally fails to converge to a feasible KKT point of the problem \eqref{intro}.

Next, we consider the performance of the proposed algorithms on synthetic examples where $W$ is not of full rank.

\begin{example}\label{syngeneral}
	We consider problem \eqref{intro} with $m=800$, $n=1000$, and $r=600$. We first generate $\hat{W} = U\Sigma V^{\top} \in \mathbb{R}^{m\times n}$ using the {\tt randn} function, and take $W = U_r \Sigma_r V_r^{\top}$. Generate $A \in \mathbb{R}^{m\times n}$ independently via {\tt randn}.
	Let $\mathbf{y} \in \mathbb{R}^{m}$ be a sparse random vector with $\|\mathbf{y}\|_0 = 80$.
	Let $\mathbf{u}_0 = A W^{\dagger} \mathbf{y} + \epsilon \boldsymbol{\theta}$, where $\boldsymbol{\theta} = {\tt randn}(m,1)$ and $\epsilon = 0.001$.
	The regularization parameters are set to $\beta = \lambda = 0.1$.
\end{example}

\begin{figure}[!ht]
	\centering
	\includegraphics[width=0.4\textwidth,height=0.18\textheight]{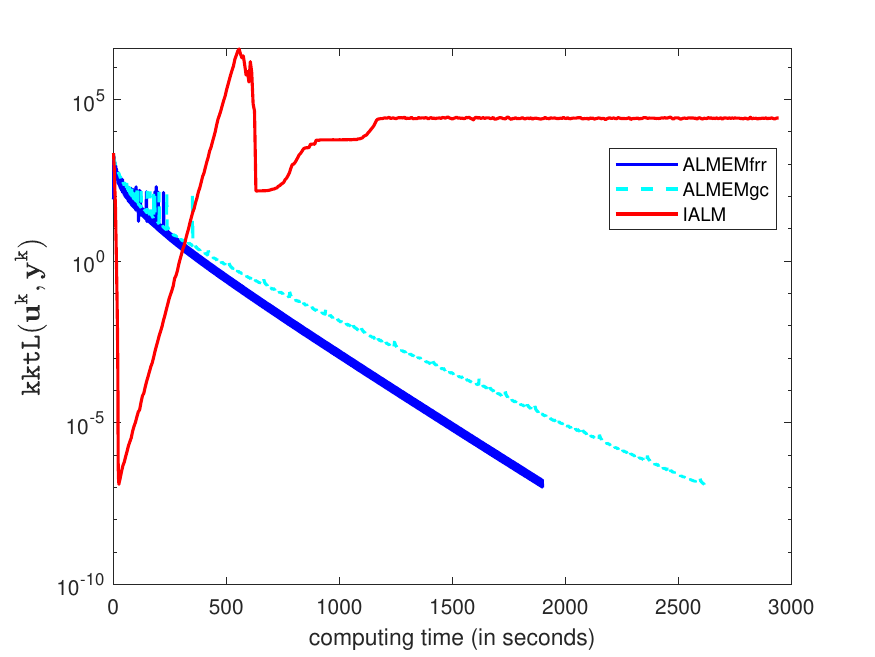}
	\includegraphics[width=0.4\textwidth,height=0.18\textheight]{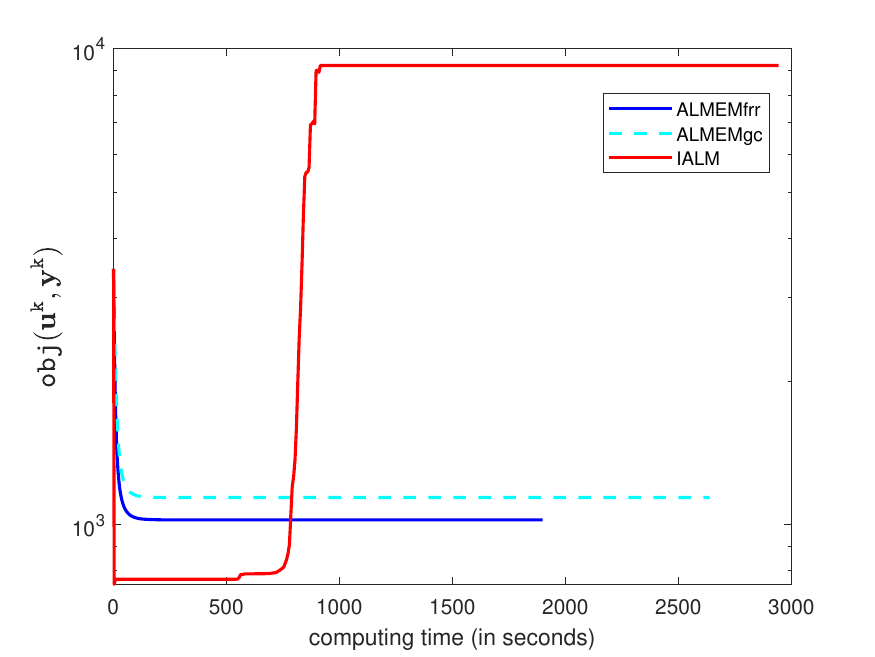}
	\includegraphics[width=0.4\textwidth,height=0.18\textheight]{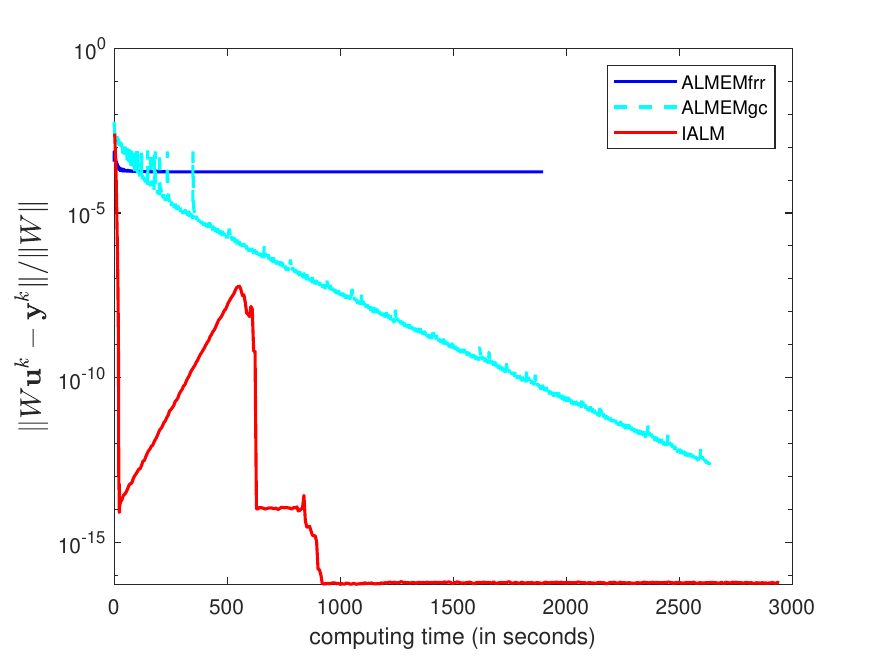}
	\caption{Convergence and computing time curves for Example  \ref{syngeneral}.}\label{figuresyngeneral}
\end{figure}

\begin{table}[!ht]\renewcommand{\arraystretch}{1.0} \addtolength{\tabcolsep}{1.0pt}
	\begin{center}{\footnotesize
			\begin{tabular}[c]{|c|c|c|c|c|c|}     \hline
				Alg. & ${\tt obj(\bu^k,\by^k)}$ &  {\tt ct.}   & {\tt rce.} &${\tt kktL(\bu^k,\by^k)}$&${\tt kkt(\bu^k,\by^k)}$ \\ \hline
				
				ALMEMfrr                    & $1.0239\times 10^3$  &$1.8982\times 10^3$  &$1.7830\times 10^{-4}$&$9.9555\times 10^{-8}$ &$8.9324\times 10^0$ \\ \hline
				ALMEMgc                     & $1.1409\times 10^3$  &$2.6373\times 10^3$   &$2.1732\times 10^{-13}$ &$9.9803\times 10^{-8}$ & $9.5782\times 10^{-8}$ \\ \hline
				{IALM}                           &$9.2293\times 10^3$    & $7.2018\times 10^3$    &$6.0459\times 10^{-17}$ &$2.7842\times 10^{4}$ &$1.2604\times 10^4$\\ \hline
		\end{tabular}}
	\end{center}
	\caption{Numerical results for Example \ref{syngeneral}.} \label{Table:syngeneral}
\end{table}

Figure \ref{figuresyngeneral} and Table \ref{Table:syngeneral} report the numerical results of Example \ref{syngeneral}. In this numerical test, we set $\rho=100$ for both ALMEMfrr and ALMEMgc, and adopt a stopping tolerance ${\tt tol} = 10^{-7}$ together with the maximum iteration number ${\tt ITmax} = 10^6$. As clearly reflected by the experimental data, both ALMEMfrr and ALMEMgc successfully converge to a KKT point of the optimization problem $\min h(\bu,\by)$, and  ALMEMfrr evidently outperforms its counterpart in computational efficiency with shorter running time. However, there exists a notable difference between the two methods: the limit point yielded by ALMEMfrr does not satisfy the KKT conditions of the original optimization problem, while the limit point from ALMEMgc fully complies with the relevant conditions. This phenomenon is well consistent with the theoretical results derived in our paper. Additionally, the iterative sequence generated by IALM cannot converge to a KKT point of $\min L(\bu,\by,\bz)$ with satisfactory precision throughout the whole iteration process.

\subsection{Real data}
﻿
We first consider the trend filtering problem, which originates from \cite{KK09, YG18}.

\begin{example}\label{truedatadapan}
	Given a time series $\bu_0$, the objective is to find another time series $\bu$ that is closest to $\bu_0$, while ensuring that $\bu$ exhibits sparsity after a gradient mapping. We performed trend filtering on the `snp500.dat' dataset, where the experimental data consist of 300 consecutive trading days starting from March 25, 1999. $W\in \R^{(n-2) \times n}$ is a $2^{nd}$ difference matrix. We set $\beta=1$ and $\lambda=100$.
\end{example}

\begin{figure}[!ht]
	\centering
	\includegraphics[width=0.4\textwidth,height=0.18\textheight]{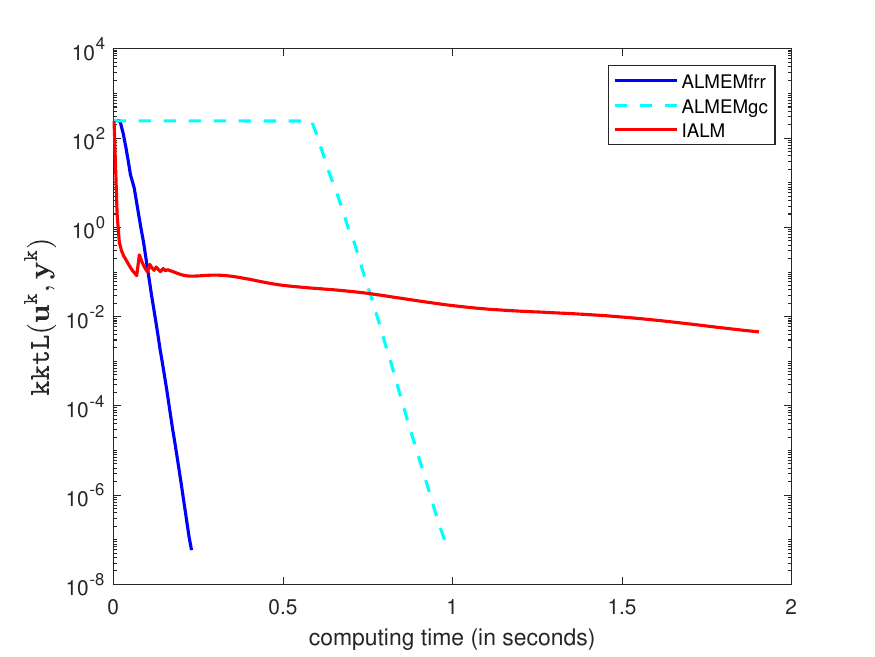}
	\includegraphics[width=0.4\textwidth,height=0.18\textheight]{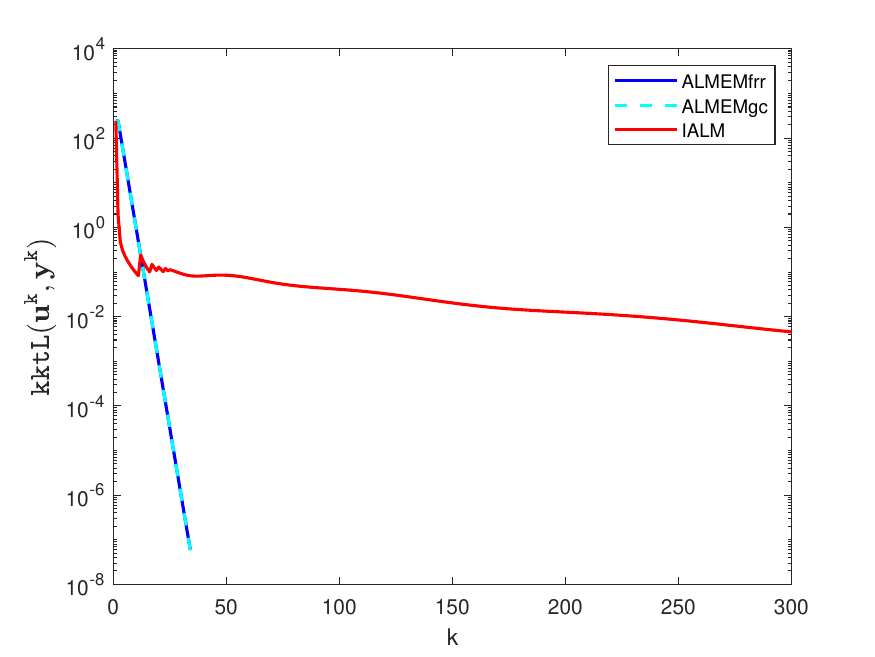}
	\includegraphics[width=0.4\textwidth,height=0.18\textheight]{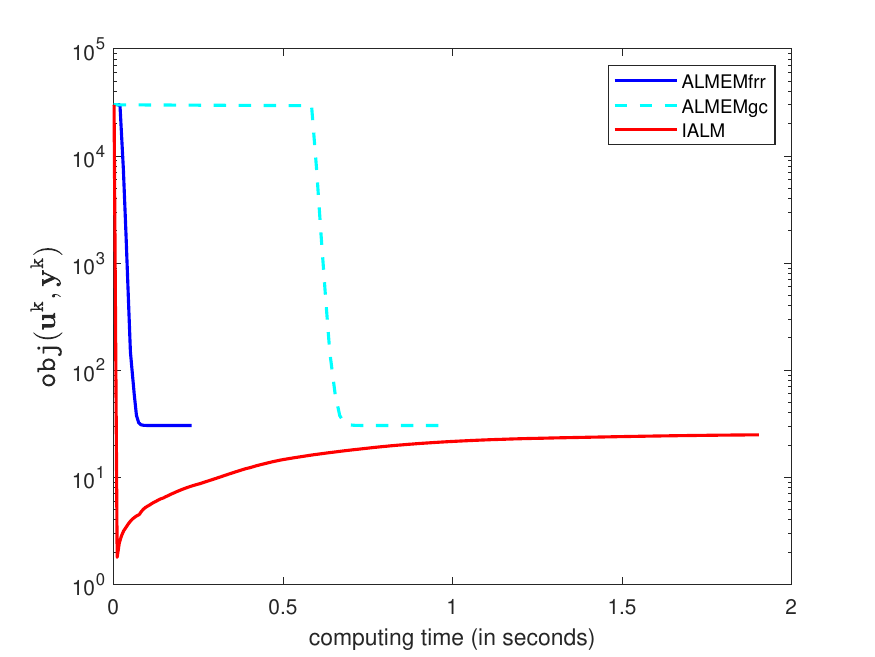}
	\includegraphics[width=0.4\textwidth,height=0.18\textheight]{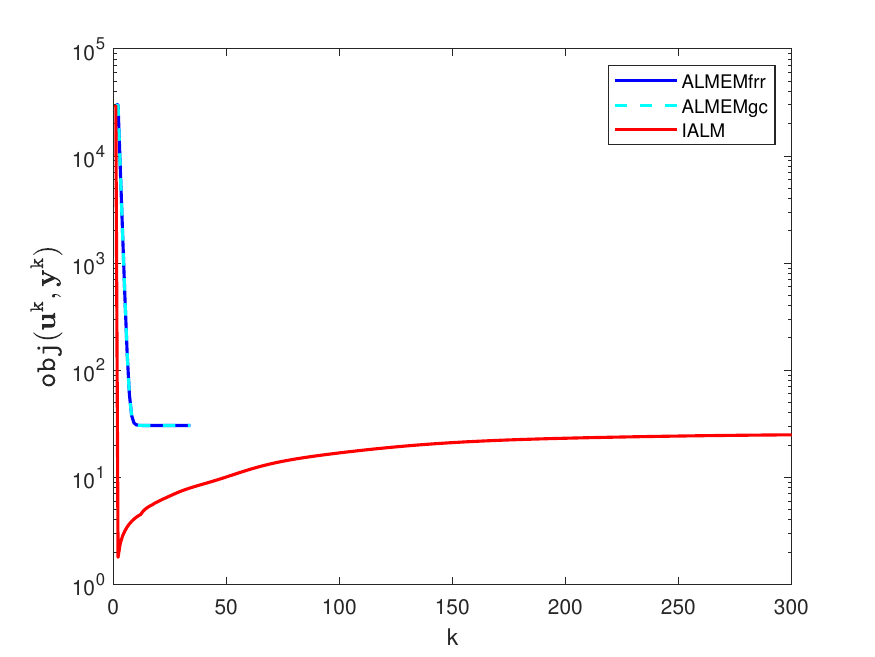}
	\includegraphics[width=0.4\textwidth,height=0.18\textheight]{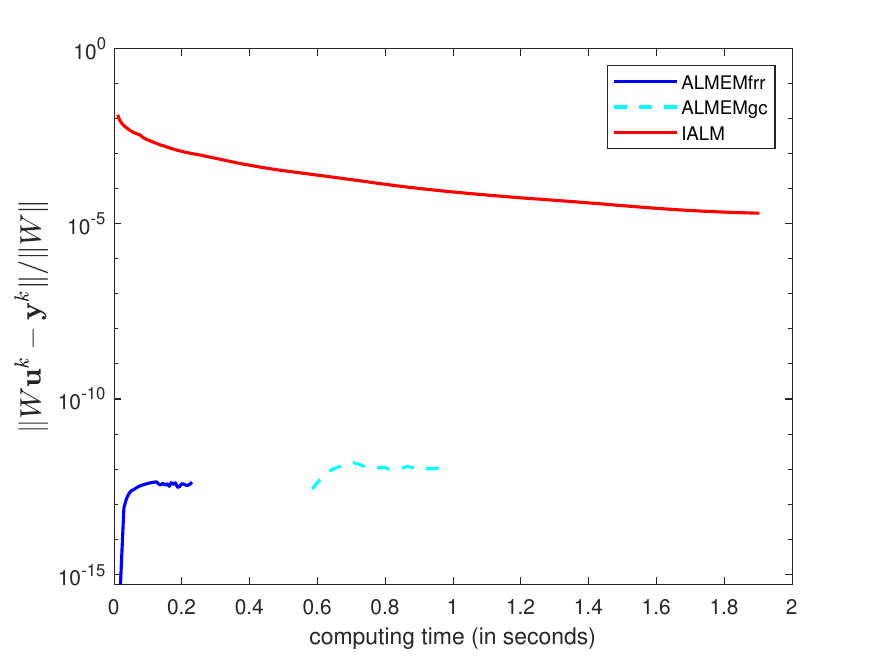}
	\includegraphics[width=0.4\textwidth,height=0.18\textheight]{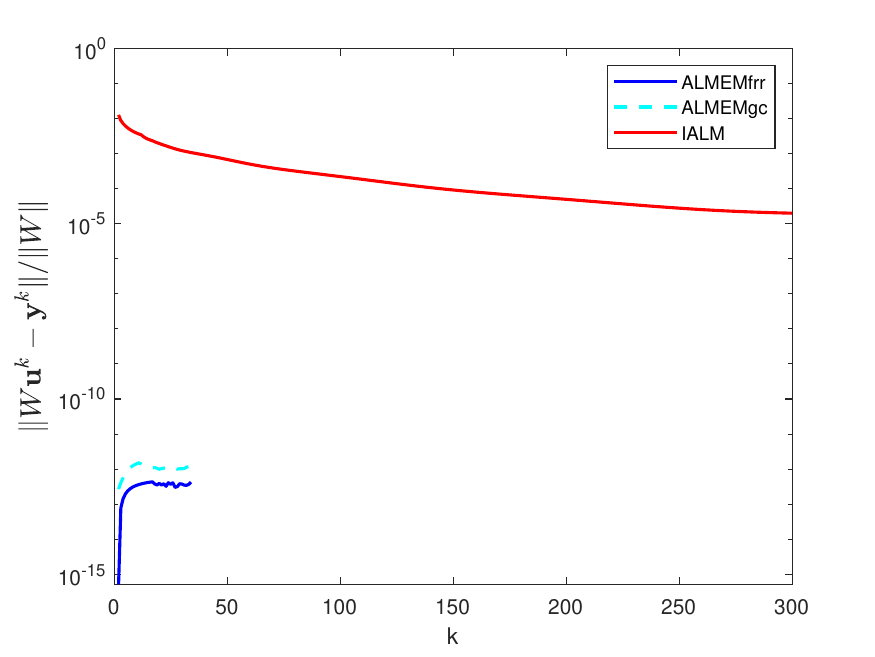}
	\caption{Success rate of recovery for Example  \ref{truedatadapan}.}\label{truefigdapan}
\end{figure}

\begin{table}[!ht]\renewcommand{\arraystretch}{1.0} \addtolength{\tabcolsep}{1.0pt}
	\begin{center}{\footnotesize
			\begin{tabular}[c]{|c|c|c|c|c|c|}     \hline
				Alg. & ${\tt obj(\bu^k,\by^k)}$ &  {\tt ct.}   & {\tt rce.} &${\tt kktL(\bu^k,\by^k)}$&${\tt kkt(\bu^k,\by^k)}$ \\ \hline
				ALMEMfrr                    & $3.0477\times 10^1$  &$2.2997\times 10^{-1}$  &$4.2603\times 10^{-13}$&$5.8739\times 10^{-8}$ &$1.1470\times 10^{-7}$ \\ \hline
				ALMEMgc                     & $3.0477\times 10^1$  &$9.8548\times 10^{-1}$   &$1.0830\times 10^{-12}$ &$5.6549\times 10^{-8}$ & $1.1308\times 10^{-7}$ \\ \hline
				{IALM}                           &$3.0477\times 10^1$    & $7.2012\times 10^3$    &$1.4989\times 10^{-11}$ &$4.9983\times 10^{-2}$ &$2.2670\times 10^{-2}$\\ \hline
		\end{tabular}}
	\end{center}
	\caption{Numerical results for Example \ref{truedatadapan}.} \label{Table:truedatadapan}
\end{table}

Figure \ref {truefigdapan} and Table \ref {Table:truedatadapan} present the numerical results of the real-data experiment in Example \ref {truedatadapan}. In this test, we set $\rho=5000$ for ALMEMfrr and $\rho=100$ for ALMEMgc, with a unified stopping tolerance ${\tt tol} = 10^{-7}$ and a maximum iteration number ${\tt ITmax} = 10^6$. The numerical results demonstrate that both ALMEMgc and ALMEMfrr can accurately converge to valid KKT points of the original optimization problem. In contrast, the conventional IALM method only achieves a KKT residual of approximately $10^{-2}$ after a long runtime of 7200 seconds, failing to obtain a high-precision solution. In terms of computational efficiency, ALMEMfrr consumes the shortest running time, followed by ALMEMgc, while the IALM algorithm suffers from significantly higher computational cost. Moreover, the iterative curve in Figure \ref{truefigdapan} verifies that ALMEMfrr exhibits stable linear convergence behavior, which is perfectly consistent with the theoretical convergence results established in this paper.

\begin{example}[Iamge smoothing]\label{truedatapicture}
	We evaluate the performance of the proposed algorithm on the image smoothing. In this example, the image is sourced from \cite{FY18}, and we employ a difference operator that considers the vertical direction. We set $\beta = \lambda = 2.5$.
\end{example}

\begin{figure}[!ht]
	\centering
	\includegraphics[width=0.4\textwidth,height=0.22\textheight]{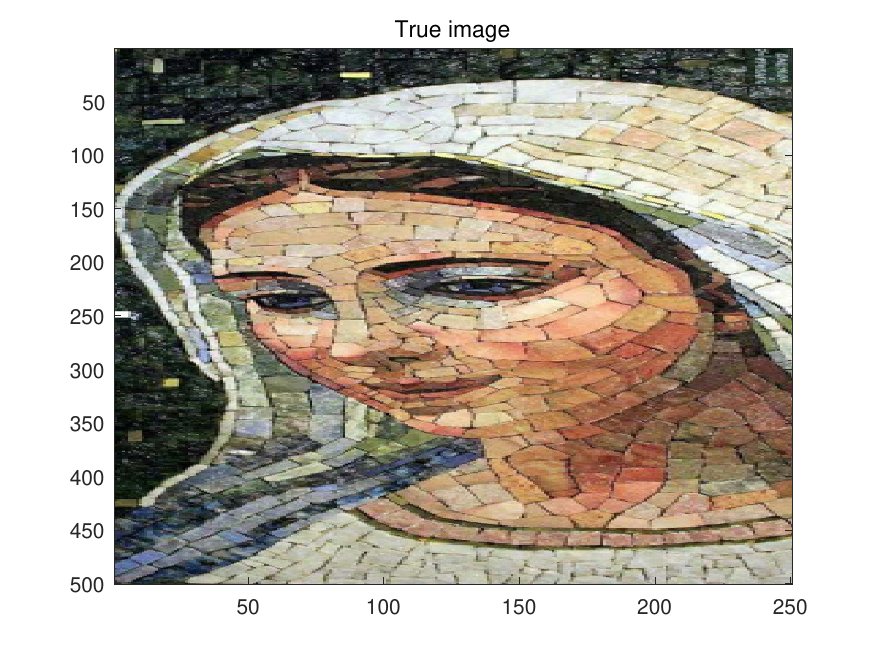}
	\includegraphics[width=0.4\textwidth,height=0.22\textheight]{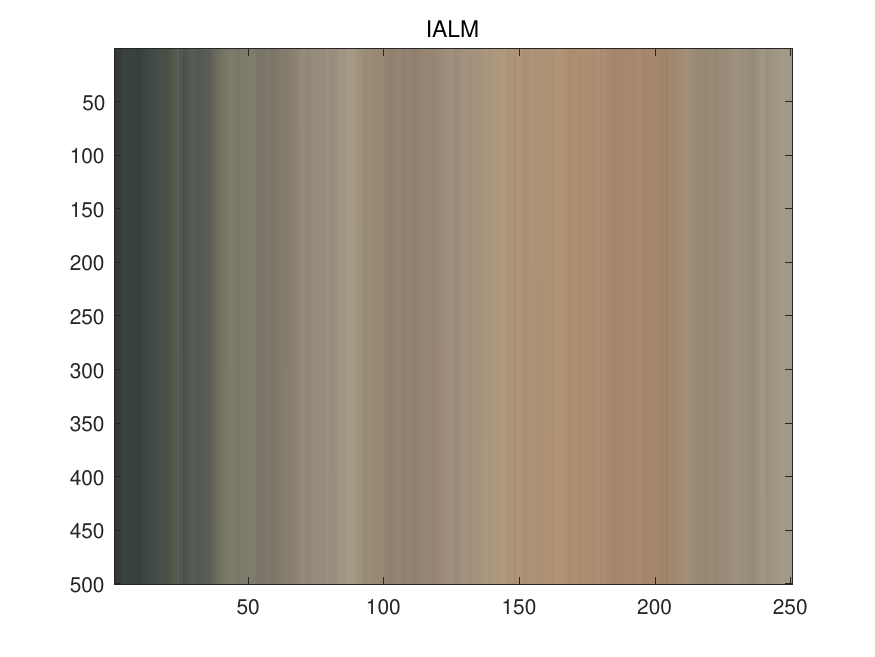}
	\includegraphics[width=0.4\textwidth,height=0.22\textheight]{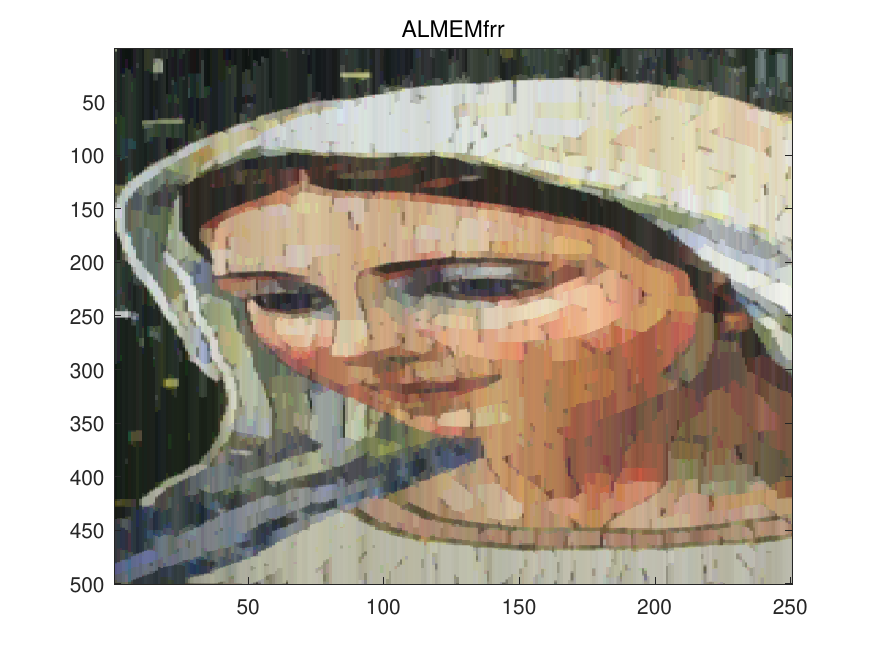}
	\includegraphics[width=0.4\textwidth,height=0.22\textheight]{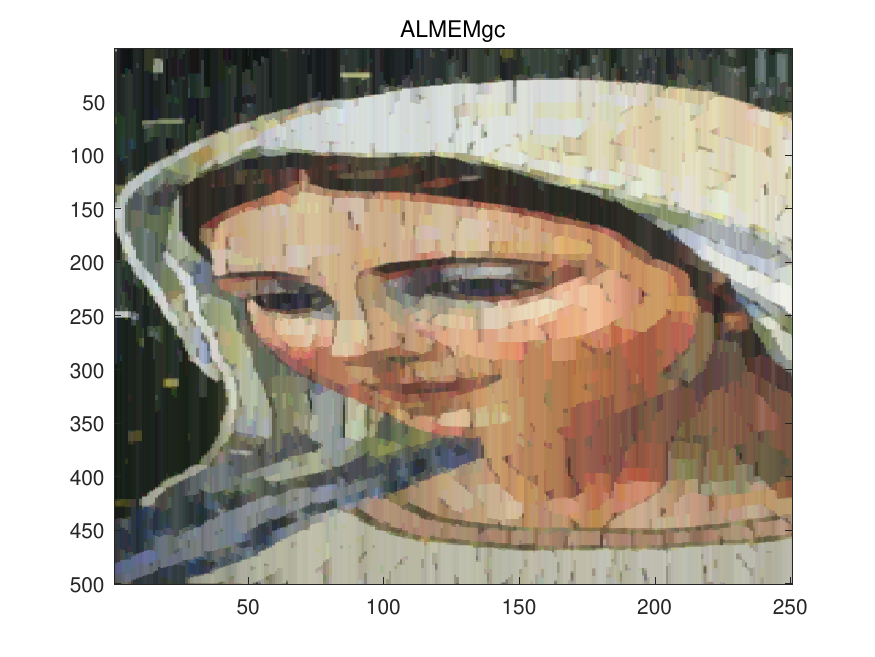}
	\caption{Constructed images via ALMEMfrr, ALMEMgc and IALM for Example  \ref{truedatapicture}.}\label{truefigpicture}
\end{figure}

Image smoothing suppresses undesired textures and noise while preserving prominent structural features, serving as a fundamental preprocessing step for salient object detection, image segmentation, image restoration and other computer vision tasks. In this part, we set $W$ as the gradient operator, i.e., $W\in\mathbb{R}^{(m-1)\times m}$ with $W_{ii}=-1, W_{i,i+1}=1$ and the other entries are zeros. We further test the texture removal task within the framework of image smoothing. Figure \ref {truefigpicture} displays the visual results of the real-image test in Example \ref{truedatapicture} with $\rho=100$ for ALMEMfrr and ALMEMgc, ${\tt tol} = 10^{-3}$ and ${\tt ITmax} = 10^6$. Additionally, we impose a runtime limit of 3600 seconds, where all algorithms are forcibly terminated once the elapsed time exceeds this threshold.

As observed in Figure \ref{truefigpicture}, both the proposed ALMEMgc and ALMEMfrr methods achieve favorable image restoration performance compared with the ground-truth image. They effectively eliminate undesired horizontal streaks while preserving the primary structural information of the original image. This satisfactory restoration performance validates the rationality of adopting only the vertical difference operator in our model design. By comparison, the baseline IALM method cannot achieve effective image restoration under the same experimental settings. It fails to remove image artifacts and inevitably results in severe loss of valid image details and original texture content.

\begin{example}[Image smoothing]\label{truedatapictureman}
	We evaluate the performance of the proposed algorithm on the image smoothing. the image is also sourced from \cite{FY18}, and we also employ a difference operator that considers the vertical direction. We set $\beta = \lambda = 0.8$.
\end{example}

\begin{figure}[!ht]
	\centering
	\includegraphics[width=0.4\textwidth,height=0.22\textheight]{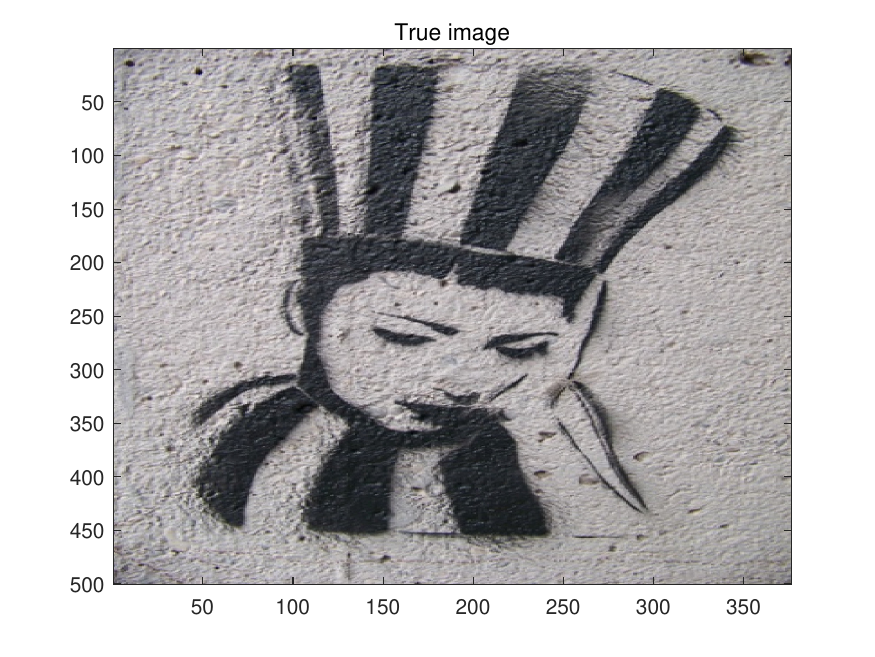}
	\includegraphics[width=0.4\textwidth,height=0.22\textheight]{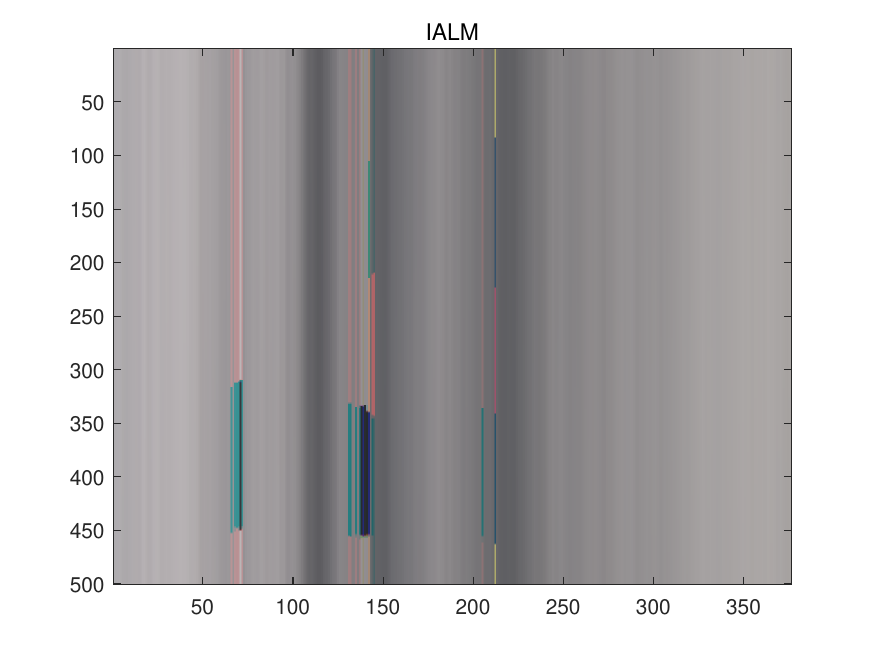}
	\includegraphics[width=0.4\textwidth,height=0.22\textheight]{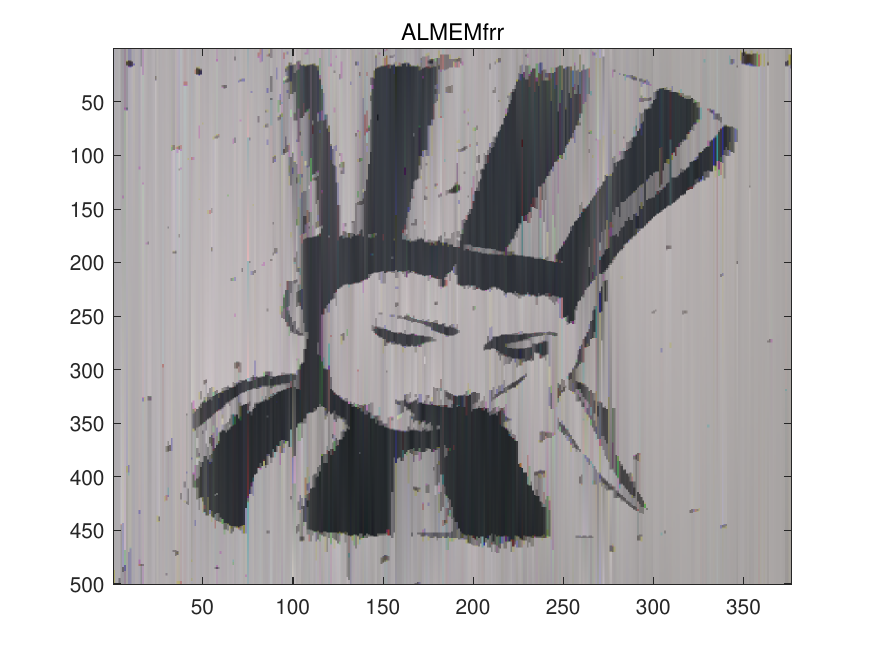}
	\includegraphics[width=0.4\textwidth,height=0.22\textheight]{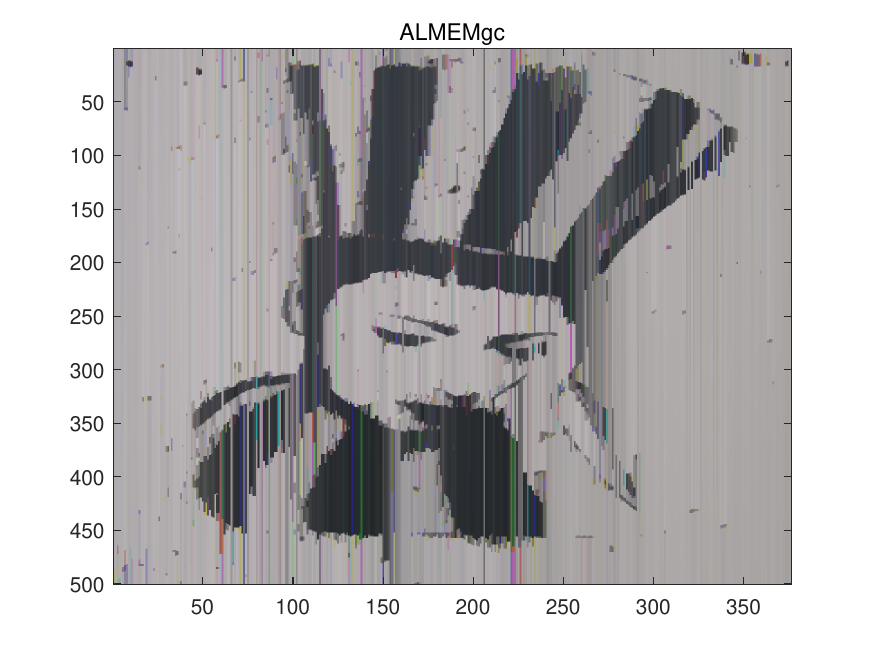}
	\caption{Constructed images via ALMEMfrr, ALMEMgc and IALM for Example  \ref{truedatapictureman}.}\label{truefigpictureman}
\end{figure}

In this example, we consider the case where $W$ is not of full rank. Specifically, $W$ is a row circulant matrix of $(1,-1,0,\ldots,0)$. Such circulant difference matrices are widely adopted for constructing smoothing priors in image processing tasks. Figure \ref{truefigpictureman} reports the corresponding numerical results of Example \ref{truedatapictureman} with $\rho=100$ for ALMEMfrr and ALMEMgc, ${\tt tol} = 10^{-3}$ and ${\tt ITmax} = 10^6$. To avoid excessive computation, we additionally set a runtime limit: all algorithms will stop execution once the elapsed time reaches 7200 seconds. 

As can be clearly seen from Figure \ref{truefigpictureman}, compared with the original ground-truth image, the proposed ALMEMfrr and ALMEMgc are capable of performing effective image smoothing and successfully eliminating various undesirable artifacts. In sharp contrast, the output generated by the IALM algorithm cannot maintain basic image characteristics, leading to obvious loss of original visual content and detailed information.

\section{Concluding Remarks}\label{sec:5}
This paper investigates a non-separable composite $\ell_0$-$\ell_2$  regularization model for inverse problems. We propose two exact augmented Lagrangian algorithms, ALMEMfrr and ALMEMgc, for full row-rank and arbitrary-rank transform matrices, respectively. Closed-form solutions are derived for all subproblems. Under mild assumptions, we prove that the sequence generated by ALMEMfrr converges linearly to a KKT point, and that any limit point of the sequence generated by ALMEMgc is a KKT point.

\vspace{2mm}
{\bf Conflict of Interest Statement} The authors declare that they have no conflict of interest.

{\bf Data Availability Statement} All data generated or analysed during this study are included in this manuscript.

\end{document}